\definecolor{NoteColor}{rgb}{1,0,0}
\renewcommand{\textsc}{\textcolor{red}}
\newtheorem{theorem}{\rm\bf Theorem}[section]
\newtheorem{proposition}[theorem]{\rm\bf Proposition}
\newtheorem{lemma}[theorem]{\rm\bf Lemma}
\newtheorem{corollary}[theorem]{\rm\bf Corollary}
\newtheorem{theorem 1}{\rm\bf Proposition 1}
\newtheorem*{theorem 2}{\rm\bf Proposition 2}
\theoremstyle{definition}
\theoremstyle{remark}
\newtheorem{questions}[theorem]{\rm\bf Questions}
\def\interieur#1{\mathord{\mathop{\kern 0pt #1}\limits^\circ}}
\title[Mapping  class group]
{On the classification of mapping class actions on Thurston's asymmetric metric}
\author{L. Liu}
\address{Lixin Liu, Department of Mathematics, Sun Yat-Sen University, 510275, Guangzhou, P. R. China}
\email{mcsllx@mail.sysu.edu.cn}
\author{A. Papadopoulos}
\address{Athanase Papadopoulos,  Universit{\'e} de Strasbourg and CNRS,
7 rue Ren\'e Descartes,
 67084 Strasbourg Cedex, France} \email{athanase.papadopoulos@math.unistra.fr}
\date{\today}
\author{W.  Su}
\address{Weixu Su, Department of Mathematics, Fudan University, 200433, Shanghai, P. R. China, and Universit\'e de Strasbourg and CNRS, 7 rue Ren\'e Descartes,
67084 Strasbourg Cedex, France}
\email{suweixu@gmail.com}
\author{G. Th\'eret}
\address{Guillaume Th\'eret, Institut de math\'ematiques de Bourgogne, Universit\'e de Bourgogne UMR 5584 du CNRS BP 47870 21078 Dijon cedex,
France}
\email{guillaume.theret71@orange.fr}
\date{\today}
\thanks{}
\begin{document}

\begin{abstract}
We study the action of the elements of the mapping class group of a surface of finite type on the Teichm\"uller space of that surface equipped with Thurston's asymmetric metric. We classify such actions as elliptic, parabolic, hyperbolic and pseudo-hyperbolic, depending on whether the translation distance of such an element is zero or positive and whether the value of this translation distance is attained or not, and we relate these four types to Thurston's classification of mapping classes. The study is parallel to the one made by Bers in the setting of Teichm\"uller space equipped with Teichm\"uller's metric, and to the one made by Daskalopoulos and Wentworth in the setting of Teichm\"uller space equipped with the Weil-Petersson metric.
\end{abstract}

\maketitle


\noindent AMS Mathematics Subject Classification:   32G15 ; 30F60 ; 57M50 ; 57N05.
\medskip

\noindent Keywords: Teichm\"uller space; Thurston's asymmetric metric; mapping class group.
\medskip

\tableofcontents

\section{Introduction}\label{intro}

Let $S=S_{g,n}$ be a connected oriented surface of finite type, of genus $g$ with $n$ punctures. We assume that the Euler characteristic of $S$ is negative. We shall consider hyperbolic structures on $S$ and any such structure will be complete and of finite area. We say that two complex structures (respectively hyperbolic structures)  $X$ and $Y$ on $S$ are equivalent  if there is a conformal map (respectively an isometry) from $X$ to $Y$ which is homotopic to the identity map of $S$.
The Teichm\"uller space $\mathcal{T}(S)$ of $S$ is the space of complex structures (or, equivalently, the space of hyperbolic structures) on $S$ up to equivalence. 

An \emph{asymmetric metric} on a set $M$ is a nonnegative function $\delta$ on 
$ M \times M$ which satisfies the axioms of a metric except the symmetry axiom, that is, we do not require that $\delta(x,y)=\delta(y,x)$ for all $x, y \in M$.

Thurston defined in \cite{Thurston2} an asymmetric metric $d_L$ on  $\mathcal{T}(S)$
by setting for each $X,Y \in \mathcal{T}(S)$
\begin{equation}\label{equ:Lip}
d_L(X,Y)=\inf_{f} \log L_f,
\end{equation}
where $X$ and $Y$ are considered as (equivalence classes of) complete finite area hyperbolic structures on $S$, where the infimum is take over all homeomorphsims $f:X \to Y$ which are homotopic to the identity map of $S$ and where 
 $L_f$ is the Lipschitz constant of $f$, that is,
 \[L_f=\sup_{x\not=y}\frac{d_Y(f(x),f(y))}{d_X(x,y)}\]
 for $x$ and $y$ in $X$.

 We shall call this asymmetric metric the \emph{Thurston asymmetric metric}. 

Thurston \cite{Thurston2} proved that  there is a (not necessarily unique) extremal Lipschitz homeomorphsim that realizes the infimum in (\ref{equ:Lip}), and that
$$d_L(X,Y)=\log \sup_{\gamma}  \frac{\ell_Y(\gamma)}{\ell_X(\gamma)},$$
where $\ell_X(\gamma)$ denotes the hyperbolic length of $\gamma$ in $X$ and $\gamma$ ranges over all essential (that is, neither homotopic to a point nor a puncture) simple closed curves on  $S$. We denote the Lipschitz constant of such an extremal Lipschitz map by $L(X,Y)$. Thurston showed that this function $d_L$ is indeed an asymmetric metric on Teichm\"uller space, that it is a Finsler metric  and that any two points in  Teichm\"uller space can be joined by a (not necessarily unique) $d_L$-geodesic (that is, a shortest path with respect to $d_L$). Thurston also made a relation between extremal Lipschitz maps and a class of $d_L$-geodesics called \emph{stretch lines}. (We shall recall the definition below.)

The Teichm\"uller metric is defined by 
$$d_T(X,Y)=\frac{1}{2}\inf_{f} \log K_f(X,Y),$$
where $K_f(X,Y)$ is the quasiconformal dilatation of a homeomorphsim $f:X \to Y$ homotopic to the identity map of $S$. Using a lemma of Wolpert, we have \cite{Wolpert}, $d_L  \leq 2d_T$.

By definition, the topology induced by Thurston's asymmetric metric on $\mathcal{T}(S)$ is the one induced by
 its  symmetrization, 
$$\frac{1}{2} \{d_L(X,Y)+ d_L(Y,X)\}$$
which is a genuine metic. This topology coincides with the one induced by
  the Teichm\"uller metric, see Li \cite{Li} and Papadopoulos-Th\'eret \cite{PT} for information about the symmetrization. In an asymmetric metric space $(M,\delta)$, the \emph{left} and \emph{right} open balls centered at a point $x$ and of radius $\epsilon$, defined respectively by
\[\{y\in M\ \vert \ \delta(x,y)<\epsilon\}\]
and 
\[\{y\in M\ \vert \ \delta(y,x)<\epsilon\}\]
 may have completely different behaviors. 
Endowed with Thurston's asymmetric metric, the space $\mathcal{T}(S)$  is complete, and the left and right closed balls are compact. See \cite{Liu2001}, \cite{PT} and \cite{PTHand} for the topology induced by this asymmetric metric.

The \emph{mapping class group} $\mathrm{Mod}(S)$ of $S$ is the group of homotopy classes of orientation-preserving homeomorphism of $S$. This group acts properly discontinuously and isometrically on $\mathcal{T}(S)$ endowed with Teichm\"uller's metric or with Thurston's asymmetric metric. The quotient space is the moduli space
$$\mathcal{M}(S)=\mathcal{T}(S)/ \mathrm{Mod}(S).$$

Thurston used his compactification of $\mathcal{T}(S)$ by the space of projective measured foliations to prove the following theorem (see Thurston \cite{Thurston}, Theorem 4 or \cite{FLP} for details).

\begin{theorem}[Nielsen-Thurston classification]  Each $\varphi\in \mathrm{Mod}(S)$ is either periodic, reducible or pseudo-Anosov. Pseudo-Anosov mapping classes are neither periodic nor reducible.

\end{theorem}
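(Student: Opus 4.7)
The plan is to apply Brouwer's fixed point theorem to Thurston's compactification $\overline{\mathcal{T}(S)} = \mathcal{T}(S) \cup \mathcal{PMF}(S)$, which is homeomorphic to a closed ball of dimension $6g-6+2n$ on which the mapping class group acts continuously. For any $\varphi \in \mathrm{Mod}(S)$ this yields a fixed point $p \in \overline{\mathcal{T}(S)}$, and the classification is obtained by analyzing where $p$ can lie.

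If $p \in \mathcal{T}(S)$, then $\varphi$ is represented by an isometry of a complete finite-area hyperbolic structure on $S$; such an isometry group is finite, so $\varphi$ has finite order and is periodic. If $p = [\mathcal{F}] \in \mathcal{PMF}(S)$ and $\mathcal{F}$ is not arational, then $\mathcal{F}$ either contains a simple closed curve as a leaf or is supported on a proper subsurface, and in either case one extracts from $\mathcal{F}$ an essential isotopy class of disjoint simple closed curves preserved as a set by $\varphi$, showing that $\varphi$ is reducible.

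The remaining case is $p = [\mathcal{F}]$ with $\mathcal{F}$ arational. Here one must produce a second fixed projective class $[\mathcal{G}]$, arational and transverse to $[\mathcal{F}]$, together with a dilatation $\lambda > 1$ such that $\varphi$ scales the transverse measure of $\mathcal{F}$ by $\lambda$ and that of $\mathcal{G}$ by $\lambda^{-1}$. One obtains $[\mathcal{G}]$ by a second fixed-point argument, for instance by restricting $\varphi$ to a suitable invariant subset of $\overline{\mathcal{T}(S)}$ disjoint from $[\mathcal{F}]$, or by exploiting the North--South type dynamics of $\varphi$ on the sphere; the pair $([\mathcal{F}],[\mathcal{G}])$ then defines a singular Euclidean structure on $S$ for which $\varphi$ is realized as a pseudo-Anosov homeomorphism.

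The final assertion that pseudo-Anosov classes are neither periodic nor reducible follows from the resulting dynamics: $\lambda > 1$ forbids periodicity, and the arationality of the invariant foliations forbids any invariant essential multicurve, since the intersection numbers of such a multicurve with the stable foliation would have to be scaled by a positive power of $\lambda$ under iteration while being permuted among finitely many values. I expect the main obstacle to be the arational subcase: both the production of a genuinely transverse second fixed foliation and the reconstruction of a pseudo-Anosov homeomorphism from projective fixed-point data require Thurston's structural analysis of measured foliations and the passage to an invariant singular flat structure, and this is the technical core of the theorem.
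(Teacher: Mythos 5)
The paper does not prove this statement at all: it is quoted as the Nielsen--Thurston classification with references to \cite{Thurston} and \cite{FLP}, the only methodological remark being that Thurston proved it using the compactification of $\mathcal{T}(S)$ by projective measured foliations. Your outline follows exactly that route (Brouwer's fixed point theorem applied to the closed ball $\mathcal{T}(S)\cup\mathcal{PMF}(S)$), so there is no alternative approach to compare; the only question is whether your sketch could stand on its own as a proof, and it cannot, because the arational case contains two genuine gaps rather than mere technicalities.

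First, the fixed point argument only gives $\varphi(\mathcal{F})=\lambda\mathcal{F}$ for \emph{some} $\lambda>0$, and you silently assume $\lambda>1$. The subcase $\lambda=1$ is a real case: an arational projective class fixed with dilatation $1$ does not by itself produce a pseudo-Anosov structure, and in \cite{FLP} this subcase requires a separate argument showing that $\varphi$ is then periodic (or reducible). Second, you propose to obtain the transverse invariant foliation $[\mathcal{G}]$ by ``exploiting the North--South type dynamics of $\varphi$'' on $\mathcal{PMF}(S)$; but North--South dynamics is a \emph{consequence} of $\varphi$ being pseudo-Anosov and is not available before the stable/unstable pair has been constructed --- as stated, this step is circular. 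The honest version of this step (constructing the unstable foliation, e.g.\ via limits of $\lambda^{-n}\varphi^{n}(\mathcal{G})$ together with the train-track or Markov-partition machinery, and then realizing $\varphi$ by an actual homeomorphism preserving the associated singular flat structure) is the technical core of Expos\'es 9--12 of \cite{FLP} and cannot be reduced to ``a second fixed-point argument.'' The remaining parts of your sketch --- periodicity from a fixed point in $\mathcal{T}(S)$ via finiteness of hyperbolic isometry groups, reducibility from a non-arational fixed foliation via the canonical invariant multicurve, and the exclusivity statement via intersection numbers scaled by powers of $\lambda$ --- are correct and standard.
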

Here, $\varphi$ is said to be \emph{periodic} if it is of finite order; $\varphi$ is said to be \emph{reducible} 
if there is a multicurve (i. e. a union of disjoint essential curves) on $S$ which is globally fixed by $\varphi$; $\varphi$ is said to be \emph{pseudo-Anosov} if it preserves a pair of projective classes of transverse  measured foliations having no singular closed leaves, multiplying the transverse measure on one by some constant $K>1$ and on the other by the constant $1/K$. We shall recall this classification more precisely in the following sections.

\bigskip

Let $\varphi$ be an element of  $\mathrm{Mod}(S)$. We set 
\begin{equation}\label{equ:translation}
a(\varphi)=\inf_{X\in \mathcal{T}(S) }d_L(X,\varphi(X)).
\end{equation}
We call $a(\varphi)$ the \emph{translation distance}  of $\varphi$ with respect to Thurston's asymmetric metric. We say that $a(\varphi)$ is \emph{realized} if there exists some $X\in \mathcal{T}(S) $ such that 
$a(\varphi)=d_L(X,\varphi(X))$.

It is interesting to compare $a(\varphi)$ with the translation distance defined by the Teichm\"uller metric
$d_T$:
$$b(\varphi)=\inf_{X\in \mathcal{T}(S) }d_T(X,\varphi(X)).$$
Since $d_L  \leq 2d_T$, we have $a(\varphi)\leq 2b(\varphi)$.

Since Thurston's asymmetric metric is not symmetric, we may also consider
$$a(\varphi^{-1})=\inf_{X\in \mathcal{T}(S) }d_L(X,\varphi^{-1}(X))=\inf_{X\in \mathcal{T}(S)}d_L(\varphi(X),X).$$

To motivate the work done in this paper, we recall that Bers \cite{Bers} studied the translation distance $b(\varphi)$ and proved the following classification result.

\begin{theorem} [Bers \cite{Bers}] Let $\varphi\in \mathrm{Mod}(S)$. Then
\begin{enumerate}
\item \label{p1} $b(\varphi)=0 $ and realized $\Longleftrightarrow$ $\varphi$ is periodic.
\item  $b(\varphi)$ is not  realized $\Longleftrightarrow$ $\varphi$ is reducible.
\item  $b(\varphi)>0 $ and realized $\Longleftrightarrow$ $\varphi$ is pseudo-Anosov.
 \end{enumerate}
\end{theorem}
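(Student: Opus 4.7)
The plan is to reduce everything to the Nielsen--Thurston trichotomy --- finite order, infinite-order reducible, and pseudo-Anosov --- which partitions $\mathrm{Mod}(S)$ into three mutually exclusive classes. Since the three alternatives (1)--(3) of the statement are also mutually exclusive and exhaustive, it suffices to prove the three $\Leftarrow$ implications; the $\Rightarrow$ directions then follow by elimination together with the Nielsen--Thurston classification.

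For the $\Leftarrow$ direction of (1): if $\varphi$ has finite order then, by the Nielsen realization theorem, $\varphi$ fixes some $X\in\mathcal{T}(S)$, so $b(\varphi)=0$ is realized at $X$. Conversely (useful for handling (2) by elimination), if $d_T(X,\varphi(X))=0$ is attained, then $\varphi(X)=X$, and the proper discontinuity of the $\mathrm{Mod}(S)$-action on $(\mathcal{T}(S),d_T)$ forces $\varphi$ to have finite order.

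For the $\Leftarrow$ direction of (3), I would invoke the Teichm\"uller axis of a pseudo-Anosov $\varphi$ of dilatation $K>1$: the invariant pair of measured foliations determines a Teichm\"uller geodesic on which $\varphi$ acts as translation by $\tfrac{1}{2}\log K$, so $b(\varphi)\le \tfrac{1}{2}\log K$ is realized along the axis. The matching lower bound $d_T(X,\varphi(X))\ge \tfrac{1}{2}\log K$ for every $X$ is the key analytic step: since the extremal quasiconformal map from $X$ to $\varphi(X)$ is unique and its initial/terminal foliations are dragged by $\varphi$, unless the foliations coincide with the invariant pair of $\varphi$ one can iterate to decrease the dilatation, contradicting uniqueness; this pins equality onto the axis and yields strict inequality elsewhere.

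The main obstacle is (2). Given an infinite-order reducible $\varphi$ with invariant multicurve $\gamma$, I would show that $b(\varphi)$ is not attained by a pinching argument: for any candidate minimizer $X$, one constructs $X'$ whose $\gamma$-components are strictly shorter and for which $d_T(X',\varphi(X'))<d_T(X,\varphi(X))$. The underlying picture is that, as the lengths of the components of $\gamma$ tend to $0$, the distance $d_T(X,\varphi(X))$ converges to an infimum determined by the restrictions of $\varphi$ to the pieces of $S\setminus\gamma$ together with Dehn-twist contributions around $\gamma$ (absorbed by a suitable reparametrization of the family), and this infimum is only realized on the Bers boundary rather than inside $\mathcal{T}(S)$. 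Establishing the effective monotonicity of $d_T(X,\varphi(X))$ under pinching is the technical heart of the theorem. Once this is done, the $\Rightarrow$ implication of (2) follows because a $\varphi$ with unrealized infimum is neither periodic (by (1)) nor pseudo-Anosov (by (3)), hence infinite-order reducible.
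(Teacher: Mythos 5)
This theorem is not proved in the paper: it is quoted from Bers as background, and the only traces of its proof in the text are the remarks that Bers obtains realization for irreducible classes by a compactness argument (reproduced for $d_L$ in the proof of the theorem on irreducible maps), identifies the invariant Teichm\"uller axis by analyzing the extremal map at a minimizer, and rules out realization for infinite-order reducible classes via the Nielsen extension, i.e.\ by shrinking the reducing curves to strictly decrease $d_T(X,\varphi(X))$. Measured against that, your architecture is sound: the elimination scheme is valid because finite-order, infinite-order-reducible and pseudo-Anosov partition $\mathrm{Mod}(S)$ while the three alternatives of the statement partition the possible behaviors of $b(\varphi)$; your reading of ``reducible'' in (2) as ``infinite order and reducible'' is the correct one (the paper itself restates the theorem this way in Section 5); and part (1) is complete as you give it. Your plan for (2) is exactly Bers' plan.

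There are, however, two genuine gaps. First, in (3) the key inequality $d_T(X,\varphi(X))\geq \tfrac{1}{2}\log K_f$ for \emph{every} $X$ does not follow from the mechanism you describe. Composing the extremal maps $\varphi^{j}f\varphi^{-j}$ only yields the upper bound $d_T(X,\varphi^n(X))\leq n\,d_T(X,\varphi(X))$, and a mismatch of initial and terminal differentials makes that inequality strict --- which points the wrong way for a lower bound and contradicts no uniqueness statement. The ``geodesics must concatenate'' argument characterizes a minimizer \emph{after} you know the infimum is attained; it does not prove attainment or the global lower bound. The missing ingredient is either the compactness argument (a pseudo-Anosov class is irreducible and aperiodic, so a minimizing sequence stays in a thick part and the infimum is attained modulo the mapping class group), or a quantitative growth estimate for extremal or hyperbolic lengths under iteration of $\varphi$, in the spirit of the paper's proof that $a(\varphi)=\log K$ via the estimate $i(\varphi^m(\lambda_2),\alpha)\leq \ell_{\varphi^m(X)}(\alpha)\leq i(\varphi^m(\lambda_2),\alpha)+C$. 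Second, in (2) the strict decrease of $d_T(X,\varphi(X))$ under pinching the reducing multicurve is not a reparametrization bookkeeping issue: it is the entire content of Bers' Nielsen-extension argument, and you assert it rather than prove it. The paper explicitly flags this step as the one that fails to transfer to Thurston's metric (indeed it exhibits a reducible hyperbolic class for $d_L$), which is a good indication that it cannot be treated as routine even for $d_T$.
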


In case (\ref{p1}), saying that $\varphi$ is \emph{periodic} means that it is represented by a periodic homeomorphism of the surface.

Bers interpreted the realization of $b(\varphi)$ as an extremal problem. In the case where $b(\varphi)$ is not  realized, Bers further showed that there is a generalized solution of the extremal problem which is given by a noded Riemann surface, and that the restriction of $\varphi$ on each nonsingular component of the noded Riemann surface is either periodic or pseudo-Anosov.

 Daskalopoulos and Wentworth \cite{DW} obtained an analogous classification of elements of $\mathrm{Mod}(S)$ in terms of  the  Weil-Petersson translation distance.

In analogy with  Bers' classification of elements of $\mathrm{Mod}(S)$ with respect to the Teichm\"uller metric,  we classify $\varphi\in \mathrm{Mod}(S)$ into four types:

\bigskip
\begin{center}
\begin{tabular}{ | c | c | c | r |}
\hline
Elliptic  & Parabolic  & Hyperbolic  & Pseudo-hyperbolic  \\
\hline
$a(\varphi)=0$    & $a(\varphi)=0$   & $a(\varphi)>0$  & $a(\varphi)>0$  \\
and realized  & but not realized  & and realized  &  but not realized  \\
\hline
\end{tabular}
\end{center}

\bigskip

The aim of our investigation in this paper is to analyze each of these cases.

It is easy to see that $\varphi$ is elliptic if and only if $\varphi$ has a fixed point in $\mathcal{T}(S)$.  Indeed, if $\varphi$ has a fixed point in $\mathcal{T}(S)$, then there is a complex structure $X$ on $S$ such that $\varphi$ induces a biholomorphic automorphism of $X$. Using a classical result of Hurwitz, we deduce that $\varphi$ is of finite order, therefore periodic. Conversely, if $\varphi$ is periodic, Nielsen showed that $\varphi$ has a fixed point in $\mathcal{T}(S)$. In fact, Kerckhoff proved a stronger result, namely, that the action of every finite subgroup of $\mathrm{Mod}(S)$ has a fixed point in $\mathcal{T}(S)$.  (This was his solution to the so-called the \emph{Nielsen realization problem}.)

A stretch line is a $d_L$-geodesic of a special kind, defined by ``stretching" along a complete geodesic lamination. We shall recall the definition in  Section \ref{sec:pa}.

In this paper, we prove the following:

\begin{theorem} \label{thm:pa} 
If $\varphi$ is pseudo-Anosov, then it leaves a $d_L$-geodesic invariant. Furthermore, for some $n\in \mathbb{N}$, $\varphi^n$ leaves a stretch line invariant.
\end{theorem}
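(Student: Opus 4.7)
The plan is to establish the second statement first (finding a stretch line invariant under some power $\varphi^n$), and then to deduce the first statement from it by a concatenation argument. The key tool throughout is Thurston's stretch flow, whose trajectories are $d_L$-geodesics.

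First I would set up the pseudo-Anosov data: let $\mathcal{F}^+$ be the unstable measured foliation of $\varphi$ with dilatation $K>1$, so that $\varphi$ multiplies its transverse measure by $K$. Its support $\lambda=|\mathcal{F}^+|$ is a maximal geodesic lamination, but its complementary regions are ideal polygons rather than ideal triangles, so $\lambda$ is not complete in the sense required by the stretch construction. Adding finitely many diagonal leaves to each complementary polygon produces a complete geodesic lamination $\mu\supset\lambda$, and only finitely many such completions exist. Since $\varphi$ preserves $\lambda$, it permutes these completions, so there exist $n\in\mathbb{N}$ and a completion $\mu$ with $\varphi^n(\mu)=\mu$.

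Second, I would analyse the stretch flow $\mathrm{str}_t$ along $\mu$: for any $X\in\mathcal{T}(S)$, the map $t\mapsto\mathrm{str}_t(X)$ is a $d_L$-geodesic, and $\mathrm{str}_t$ multiplies the transverse measure of any measured lamination supported on $\mu$ by $e^t$. I would first establish the lower bound $d_L(X,\varphi^n(X))\ge n\log K$ for every $X$, which follows from the length-ratio formula for $d_L$ together with the fact that $\ell_{\varphi^n(X)}(\gamma)/\ell_X(\gamma)\to K^n$ as $\gamma$ approaches $[\mathcal{F}^+]$ in the Thurston boundary. Using the compactness of left and right closed $d_L$-balls recalled in the introduction, combined with the proper discontinuity of the $\mathrm{Mod}(S)$-action, the infimum $a(\varphi^n)=n\log K$ should then be realized at some $X_0$. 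The rigidity of equality in the Lipschitz formulation forces the extremal map from $X_0$ to $\varphi^n(X_0)$ to stretch maximally along $|\mathcal{F}^+|$, so $\varphi^n(X_0)$ lies on a stretch line through $X_0$ along some completion of $|\mathcal{F}^+|$; invoking the invariance $\varphi^n(\mu)=\mu$ and adjusting $\mu$ within the finite $\varphi$-orbit of completions if necessary, one arranges this completion to be $\mu$, so that $\varphi^n(X_0)=\mathrm{str}_{n\log K}(X_0)$ and $t\mapsto\mathrm{str}_t(X_0)$ is the desired $\varphi^n$-invariant stretch line.

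For the first statement I would use $X_0$ and the same lower bound. One has $d_L(X_0,\varphi^k(X_0))\ge k\log K$ for each $k\ge 1$ and $d_L(X_0,\varphi^n(X_0))=n\log K$, so the triangle inequality applied along $X_0\to\varphi^k(X_0)\to\varphi^n(X_0)$ forces $d_L(X_0,\varphi^k(X_0))=k\log K$ for every $1\le k\le n$; in particular $a(\varphi)=\log K$ is realized at $X_0$. Iterating yields $d_L(\varphi^i(X_0),\varphi^j(X_0))=(j-i)\log K$ for all integers $i\le j$, so any concatenation of minimal $d_L$-geodesic segments from $\varphi^i(X_0)$ to $\varphi^{i+1}(X_0)$ is itself a $d_L$-geodesic, and it is $\varphi$-invariant by construction. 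The main obstacle I anticipate is the minimization step: producing an actual minimizer $X_0$ requires careful control of minimizing sequences in the asymmetric setting where left and right balls behave differently, and identifying the minimizer with a point on a stretch line along the specific completion $\mu$ (rather than some other completion in its $\varphi$-orbit) relies both on the rigidity of extremal Lipschitz maps and on exploiting the permutation action of $\varphi$ on the finite set of completions; once this is in hand, the rest reduces to the triangle inequality.
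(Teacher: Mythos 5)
Your overall architecture (first find a power $\varphi^n$ fixing a stretch line, then build a $\varphi$-invariant geodesic by concatenating geodesic segments between consecutive orbit points and checking that the lengths add up to the distance) matches the paper's, and your concatenation step is essentially the paper's proof of Theorem \ref{thm:pa-stretch4}. The genuine gap is in how you produce the $\varphi^n$-invariant stretch line. You argue that a minimizer $X_0$ of $X\mapsto d_L(X,\varphi^n(X))$ must satisfy $\varphi^n(X_0)=\mathrm{str}_{n\log K}(X_0)$ because ``the rigidity of equality in the Lipschitz formulation forces the extremal map to stretch maximally along $|\mathcal{F}^+|$, so $\varphi^n(X_0)$ lies on a stretch line through $X_0$.'' No such rigidity is available. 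Knowing that $d_L(X_0,\varphi^n(X_0))=n\log K$ is realized by the invariant lamination only tells you that this lamination sits inside the maximal ratio-maximizing chain-recurrent lamination $\mu(X_0,\varphi^n(X_0))$; Thurston's theorem (Theorem \ref{thm:thurston2}) then joins $X_0$ to $\varphi^n(X_0)$ by a \emph{concatenation} of stretch paths along possibly different completions, not by a single stretch, and $d_L$-geodesics between two points are far from unique. Worse, the paper's Proposition \ref{pro:stretch2} shows that $d_L(X,\varphi(X))=\log K$ for \emph{every} $X\in\mathcal{T}(S)$, so every point is a minimizer and the extremal problem gives you no special position at all: your rigidity claim would force every point of $\mathcal{T}(S)$ onto a stretch line through its $\varphi^n$-image, which is not known and almost certainly false. (A secondary issue: without already having an invariant stretch line, or the intersection-number asymptotics of Lemma \ref{lem:Papa}, you only have the lower bound $a(\varphi^n)\geq n\log K$, so you cannot yet assert that the infimum equals $n\log K$.)

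The mechanism the paper uses instead is endpoint behavior at Thurston's boundary: complete the \emph{stable} lamination $\lambda_1$ (whose length is expanded by $\varphi$) to a complete lamination $\mu$, and use that the stretch line directed by $\mu$ with horocyclic foliation $\lambda_2$ converges in the positive direction to $[\lambda_2]$ (Theorem \ref{thm:papa}) and, because its stump $\lambda_1$ is uniquely ergodic, in the negative direction to $[\lambda_1]$ (Theorem \ref{thm:gt}). Such a line is uniquely determined by the completion of $\lambda_1$ together with $\lambda_2$; since $\varphi$ fixes $[\lambda_1]$ and $[\lambda_2]$ and permutes the finitely many completions, some power fixes one of these finitely many lines. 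You have the finiteness of completions, but you are missing this endpoint-rigidity argument, which is what actually pins down an invariant line. (Note also the orientation: you complete the unstable foliation, whose length is contracted by $\varphi$, so on your stretch line $\varphi^n$ would act as the time-$(-n\log K)$ map of the stretch flow.)
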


We also prove the following:

\begin{theorem} \label{thm:ab} 
If $\varphi$ is pseudo-Anosov, then $a(\varphi)=a(\varphi^{-1})=b(\varphi)$.
\end{theorem}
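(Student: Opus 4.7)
The plan is to establish $a(\varphi) = a(\varphi^{-1}) = \log \lambda$, where $\lambda > 1$ is the stretch factor of the pseudo-Anosov $\varphi$, and combine this with $b(\varphi) = \log \lambda$ (an immediate consequence of Bers' theorem: along the Teichm\"uller axis of $\varphi$ the extremal quasiconformal dilatation equals $\lambda^{2}$, so $d_T(X, \varphi X) = \frac{1}{2}\log\lambda^{2} = \log\lambda$). Since $\varphi^{-1}$ is pseudo-Anosov with the same dilatation $\lambda$, by symmetry it will suffice to prove $a(\varphi) = \log\lambda$.

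The main device will be the asymptotic translation distance
\[
\tau(\varphi) := \lim_{n\to\infty} \frac{1}{n}\, d_L(X, \varphi^{n} X),
\]
whose existence (by subadditivity of $n \mapsto d_L(X, \varphi^n X)$), independence of $X \in \mathcal{T}(S)$ (via $|d_L(Y, \varphi^n Y) - d_L(X, \varphi^n X)| \leq d_L(Y, X) + d_L(X, Y)$) and inequality $\tau(\varphi) \leq a(\varphi)$ are all standard. The first part of Theorem \ref{thm:pa} gives a $d_L$-geodesic $g$ invariant under $\varphi$; since $\varphi$ is a $d_L$-isometry preserving the forward direction of $g$ and a pseudo-Anosov has no fixed point in $\mathcal{T}(S)$, it must translate $g$ by some $\alpha > 0$. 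For $X \in g$ we then have $d_L(X, \varphi^{n} X) = n\alpha$, so $\tau(\varphi) = \alpha$ and $a(\varphi) \leq \alpha$; together with $\tau(\varphi) \leq a(\varphi)$ this yields $a(\varphi) = \tau(\varphi) = \alpha$, reducing the problem to showing $\tau(\varphi) = \log\lambda$.

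For the lower bound $\tau(\varphi) \geq \log\lambda$, pick any simple closed curve $\gamma$ and use
\[
d_L(X, \varphi^{n} X) \geq \log\frac{\ell_{\varphi^{n}X}(\gamma)}{\ell_X(\gamma)} = \log\frac{\ell_X(\varphi^{-n}\gamma)}{\ell_X(\gamma)};
\]
by the standard convergence of pseudo-Anosov iterates in Thurston's compactification, $\varphi^{-n}\gamma$ tends projectively to $[\mu^{s}]$ and $\ell_X(\varphi^{-n}\gamma)$ grows as $\lambda^{n}$ up to a bounded multiplicative factor, which gives the claim. For the upper bound $\tau(\varphi) \leq \log\lambda$, the second assertion of Theorem \ref{thm:pa} provides an $n$ such that $\varphi^{n}$ preserves a stretch line $\sigma$ directed by a complete geodesic lamination extending $\mu^{s}$; along $\sigma$, moving forward by time $t$ multiplies $\ell(\mu^{s})$ by $e^{t}$, and since $\ell_{\varphi^{n}X}(\mu^{s}) = \lambda^{n}\ell_X(\mu^{s})$ (obtained from $\varphi^{-n}\mu^{s} = \lambda^{n}\mu^{s}$), the $\varphi^{n}$-translation along $\sigma$ equals exactly $n\log\lambda$. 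Hence $a(\varphi^{n}) \leq n\log\lambda$ and $\tau(\varphi) = \tau(\varphi^{n})/n \leq a(\varphi^{n})/n \leq \log\lambda$. The most subtle ingredient will be the asymptotic length estimate for $\ell_X(\varphi^{-n}\gamma)$, which relies on continuity of the length functional on $\mathcal{ML}(S)$ together with the scaling $i(\varphi^{-n}\gamma, \mu^{u}) = \lambda^{n}\, i(\gamma, \mu^{u})$; the stretch-line computation is conceptually cleaner but requires care to identify the correct invariant foliation directing the lamination, so that $\varphi^{n}$ translates forward rather than backward along $\sigma$.
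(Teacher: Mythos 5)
Your overall strategy---reduce to $a(\varphi)=\log\lambda$ via the symmetry $\varphi\leftrightarrow\varphi^{-1}$ and Bers' identity $b(\varphi)=\log\lambda$, then compare $a(\varphi)$ with the asymptotic translation length $\tau(\varphi)$ and compute $\tau(\varphi)=\log\lambda$ from the $\varphi^{n}$-invariant stretch line and the growth of $\ell_X(\varphi^{-n}\gamma)$---is coherent, and most of the individual pieces (subadditivity, independence of the basepoint, $\tau\leq a$, $\tau(\varphi^{n})=n\tau(\varphi)$, the translation amount $n\log\lambda$ along the stretch line) are correct and consistent with what the paper does. Incidentally, the lower bound needs none of the asymptotic machinery: testing the length ratio on the stable lamination itself gives $\ell_{\varphi(X)}(\lambda_1)=\ell_X(\varphi^{-1}\lambda_1)=K\,\ell_X(\lambda_1)$ (your $\lambda$ is the paper's $K$), hence $d_L(X,\varphi(X))\geq\log K$ for every $X$; this is exactly Proposition \ref{pro:stretch}.

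The genuine gap is the step ``$a(\varphi)=\tau(\varphi)$ via the $\varphi$-invariant geodesic of Theorem \ref{thm:pa}.'' In this paper the first assertion of Theorem \ref{thm:pa} is not available independently of the statement you are proving: the invariant geodesic is built in Theorem \ref{thm:pa-stretch4} by concatenating the segments $\varphi^{k}(R_1)$, and the verification that the concatenation is length-minimizing uses precisely the identity $d_L(X,\varphi(X))=\log K$ of Proposition \ref{pro:stretch2}---that is, the upper bound $a(\varphi)\leq\log K$ which is the whole difficulty here. As written, your argument is therefore circular relative to the paper's logical order. What you may use without circularity is only the second assertion (the $\varphi^{n}$-invariant stretch line, Theorem \ref{thm:pa-stretch}), and that yields $\tau(\varphi)\leq\log K$ and $a(\varphi^{n})=n\log K$ but not $a(\varphi)\leq\log K$: the subadditive inequality $a(\varphi^{n})\leq n\,a(\varphi)$ points the wrong way and only reproduces the lower bound. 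The missing ingredient is a direct proof that $d_L(X,\varphi(X))\leq\log K$ for every $X$; the paper obtains it from Papadopoulos's comparison $i(\varphi^{m}(\lambda_2),\alpha)\leq\ell_{\varphi^{m}(X)}(\alpha)\leq i(\varphi^{m}(\lambda_2),\alpha)+C$ (Lemma \ref{lem:Papa}), rewriting the ratio $\ell_{\varphi(X)}(\alpha)/\ell_X(\alpha)$ at the basepoint $\varphi^{m}(X)$ and letting $m\to\infty$. With that estimate in hand the invariant geodesic becomes a corollary rather than an input; without it your proof does not close.
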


The projection on moduli space of the $\varphi$-invariant $d_L$-geodesic provided by Theorem \ref{thm:pa} is a  closed $d_L$-geodesic, and it is homotopic to a Teichm\"uller closed geodesic of the same length. 

In analogy with the case of the Teichm\"uller metric, we prove the following:
\begin{theorem} \label{thm:irreducible} 
If $\varphi$ is irreducible, then $\varphi$ is either elliptic or hyperbolic.
\end{theorem}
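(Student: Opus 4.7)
The plan is to invoke the Nielsen--Thurston classification: an irreducible $\varphi$ is either periodic or pseudo-Anosov, and I handle the two cases separately. If $\varphi$ is periodic, the Nielsen realization theorem of Kerckhoff already recalled in the introduction produces a fixed point $X_{0}\in\mathcal{T}(S)$ of $\varphi$, whence $d_{L}(X_{0},\varphi(X_{0}))=0=a(\varphi)$ is realized and $\varphi$ is elliptic.

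Suppose now that $\varphi$ is pseudo-Anosov; I must show that $a(\varphi)>0$ and that this infimum is attained. Positivity is immediate from the results already at my disposal: Theorem \ref{thm:ab} gives $a(\varphi)=b(\varphi)$, and Bers' theorem recalled in the introduction makes $b(\varphi)$ strictly positive on pseudo-Anosov classes. For the attainment, Theorem \ref{thm:pa} supplies a $\varphi$-invariant $d_{L}$-geodesic $L$. Since $\varphi$ has infinite order, it has no fixed point in $\mathcal{T}(S)$ by the Hurwitz argument sketched in the introduction; in particular $\varphi|_{L}$ is fixed-point-free, hence acts on $L$ as a translation of some strictly positive length $T$, so
\[d_{L}(X_{0},\varphi^{n}(X_{0}))=nT\qquad\text{for every }X_{0}\in L\text{ and every }n\geq 0.\]

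It remains to identify $T$ with $a(\varphi)$. The inequality $T\geq a(\varphi)$ is the very definition of the infimum, so only $T\leq a(\varphi)$ requires an argument. For this, I introduce the stable translation length
\[\bar{a}(\varphi)=\lim_{n\to\infty}\frac{d_{L}(X,\varphi^{n}(X))}{n}.\]
The sequence $n\mapsto d_{L}(X,\varphi^{n}(X))$ is subadditive because $\varphi$ is a $d_{L}$-isometry, so Fekete's lemma guarantees that the limit exists, equals $\inf_{n}d_{L}(X,\varphi^{n}(X))/n$, and is in particular bounded above by $d_{L}(X,\varphi(X))$; a standard two-sided triangle inequality comparison shows $\bar{a}(\varphi)$ does not depend on $X$, whence $\bar{a}(\varphi)\leq a(\varphi)$. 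Evaluated at any point of $L$ the limit equals $T$, so $T\leq a(\varphi)$, and thus $T=a(\varphi)$, realized at any $X_{0}\in L$; consequently $\varphi$ is hyperbolic. The only nontrivial input is Theorem \ref{thm:pa}: once an invariant $d_{L}$-geodesic is in hand, everything else is soft asymmetric-metric bookkeeping based on subadditivity and the isometric action of $\varphi$, and does not require any further pseudo-Anosov dynamics.
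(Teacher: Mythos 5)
Your proof is correct, but it takes a genuinely different route from the paper's. The paper never invokes the Nielsen--Thurston classification here: it runs a single Bers-style compactness argument that works uniformly for any irreducible $\varphi$. The key input is Lemma \ref{lem:systol}, which bounds the systole $\underline\ell(X)$ from below in terms of $L(X,\varphi(X))$ when $\varphi$ is irreducible (otherwise the $\varphi$-orbit of a short curve would produce an invariant admissible set); this traps any minimizing sequence $(X_k)$ in a thick part, and Mumford compactness plus proper discontinuity of the $\mathrm{Mod}(S)$-action then force $a(\varphi)$ to be realized, whence $\varphi$ is elliptic or hyperbolic according to whether $a(\varphi)$ vanishes. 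Your argument instead splits into periodic and pseudo-Anosov cases and, in the latter, front-loads the paper's hardest results: Theorem \ref{thm:pa} (built from stretch lines, unique ergodicity of the invariant laminations, and convergence to Thurston's boundary) and Theorem \ref{thm:ab}. This is not circular --- Section \ref{sec:pa} proves those theorems using Theorem \ref{thm:hyperbolic} but not Theorem \ref{thm:irreducible} --- but it does mean your proof of an early, soft statement depends on the deepest machinery in the paper, and you should say explicitly that you have checked the dependency graph, since the paper proves Theorem \ref{thm:irreducible} first. Two smaller points: your positivity step via Theorem \ref{thm:ab} is overkill, since $\ell_{\varphi(X)}(\lambda_1)=K\,\ell_X(\lambda_1)$ already gives $d_L(X,\varphi(X))\geq\log K>0$ directly from the stable lamination; and your final subadditivity/stable-translation-length paragraph essentially reproves Theorem \ref{thm:hyperbolic}, which the paper establishes separately by the same telescoping inequality and which you could simply cite (being careful, as you implicitly are, that $\varphi$ translates the invariant geodesic in its positive direction, which matters for an asymmetric metric). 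What each approach buys: the paper's compactness argument is self-contained and is reused verbatim to prove Corollary \ref{coro:reducible2}; yours yields the sharper quantitative conclusion $a(\varphi)=\log K$ in the pseudo-Anosov case, but only by assuming results the compactness argument is designed to avoid.
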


\begin{theorem} \label{thm:hyperbolic} 
If $\varphi$ is of infinite order and leaves a $d_L$-geodesic invariant, then $\varphi$ is hyperbolic. The translation distance $a(\varphi)$ is attained at any point on the geodesic.
\end{theorem}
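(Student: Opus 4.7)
The plan is to show that the invariance hypothesis forces $\varphi$ to act on the geodesic $\gamma$ by a nontrivial forward translation, and then to bound $d_L(Y,\varphi Y)$ from below for all $Y\in\mathcal{T}(S)$ via an iterated triangle inequality.

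First I parametrize $\gamma\colon\mathbb{R}\to\mathcal{T}(S)$ by forward $d_L$-arc length, so that $d_L(\gamma(s_1),\gamma(s_2))=s_2-s_1$ whenever $s_1\le s_2$. Since $\varphi$ is a $d_L$-isometry preserving the image $\gamma(\mathbb{R})$, the composition $\varphi\circ\gamma$ is a forward-parametrized $d_L$-geodesic with the same image; uniqueness of the arc-length parametrization up to translation of the parameter then forces $\varphi(\gamma(s))=\gamma(s+c)$ for some $c\in\mathbb{R}$. The constant $c$ cannot be zero, for if it were then $\varphi$ would fix every point of $\gamma$ and hence have a fixed point in $\mathcal{T}(S)$, making $\varphi$ elliptic and therefore periodic by Kerckhoff's solution to the Nielsen realization problem, contradicting the hypothesis that $\varphi$ has infinite order. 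Up to the choice of orientation we may assume $c>0$.

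For any $X=\gamma(s_0)\in\gamma$ we then have $\varphi X=\gamma(s_0+c)$, so $d_L(X,\varphi X)=c$ and $a(\varphi)\le c$. For the reverse inequality, fix $X\in\gamma$ and let $Y\in\mathcal{T}(S)$ be arbitrary. Since $\varphi^n X=\gamma(s_0+nc)$, the triangle inequality combined with the fact that $\varphi^n$ is a $d_L$-isometry gives
\[
nc \;=\; d_L(X,\varphi^n X) \;\le\; d_L(X,Y)+d_L(Y,\varphi^n Y)+d_L(\varphi^n Y,\varphi^n X)\;=\;d_L(X,Y)+d_L(Y,X)+d_L(Y,\varphi^n Y).
\]
A further iteration of the triangle inequality yields $d_L(Y,\varphi^n Y)\le n\,d_L(Y,\varphi Y)$; dividing by $n$ and sending $n\to\infty$ gives $c\le d_L(Y,\varphi Y)$. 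Hence $a(\varphi)\ge c$, so $a(\varphi)=c>0$ and is realized at every point of $\gamma$; in particular, $\varphi$ is hyperbolic.

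The main obstacle in this plan is the initial step, verifying that $\varphi$ restricts to a positive translation along $\gamma$ rather than to some more exotic isometry. The asymmetry of $d_L$ is essential here, as it is what makes the forward arc-length parametrization of $\gamma$ canonical up to shift and what rules out orientation-reversing actions on $\gamma$: any such reversal would fix a point of $\gamma$, again forcing $\varphi$ to be periodic by Kerckhoff. Once this translation structure is established, the iteration argument used to compute $a(\varphi)$ is routine and closely parallels the analogous estimate in the Teichm\"uller-metric setting.
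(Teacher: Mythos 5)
Your proof is correct and follows essentially the same route as the paper, namely Bers' iterated triangle inequality $n\,d_L(X,\varphi(X))=d_L(X,\varphi^n(X))\leq d_L(X,Y)+n\,d_L(Y,\varphi(Y))+d_L(Y,X)$ followed by dividing by $n$; your preliminary analysis showing that $\varphi$ acts on the geodesic as a translation by some $c\neq 0$ (and hence that $a(\varphi)>0$) is in fact more explicit than the paper's, which tacitly uses this when it writes the telescoping equality $\sum_i d_L(\varphi^i(X),\varphi^{i+1}(X))=d_L(X,\varphi^n(X))$. The one soft spot, ``up to the choice of orientation we may assume $c>0$,'' deserves more care than either you or the paper gives it, since for the asymmetric metric the forward direction of a geodesic is canonical and a backward translation (e.g.\ by $\varphi^{-1}$ when $\varphi$ translates forward) would make $d_L(X,\varphi(X))$ a backward distance for which the telescoping identity is not automatic --- but this gap is shared with the paper's own proof.
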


\begin{theorem} \label{thm:parabolic} 
The mapping class $\varphi$ is of infinite order, reducible and periodic on all reduced components on $S$
if and only if  $\varphi$ is parabolic.
\end{theorem}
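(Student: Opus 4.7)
The plan is to prove the two implications separately, leveraging Theorems \ref{thm:irreducible} and \ref{thm:hyperbolic} to dispose of the non-reducible cases, and studying the reducible case via the canonical reduction system together with the subsurface structure.

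For the forward direction, suppose $\varphi$ has infinite order, preserves a multicurve $\Gamma=\{\gamma_1,\dots,\gamma_k\}$, and has a periodic restriction to each component of $S\setminus\Gamma$. Non-realization of $a(\varphi)$ is immediate: an attaining $X$ would give $\varphi X=X$, making $\varphi$ elliptic, hence periodic by the discussion following the definition of the four types, contradicting infinite order. To produce $a(\varphi)=0$, proceed as follows. For each $\varphi$-orbit of components of $S\setminus\Gamma$ of length $m$, pick a Nielsen-realized hyperbolic structure for $\varphi^m$ on a representative component and spread it by $\varphi,\dots,\varphi^{m-1}$ across the orbit; this yields a $\varphi$-equivariant hyperbolic structure on all of $S\setminus\Gamma$. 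Glue via Fenchel--Nielsen coordinates, assigning each $\gamma_i$ the common length $\epsilon$ and arbitrary but fixed twist parameters, producing $X_\epsilon\in\mathcal{T}(S)$. Then $X_\epsilon$ and $\varphi X_\epsilon$ agree in every Fenchel--Nielsen coordinate except possibly the twists at the $\gamma_i$, where they differ by the bounded boundary twists of $\varphi$. A twist by a bounded amount along a geodesic of length $\epsilon$ can be realized within the standard collar, whose width is of order $\log(1/\epsilon)$, as a shear whose Lipschitz constant tends to $1$ as $\epsilon\to 0$. Thus $d_L(X_\epsilon,\varphi X_\epsilon)\to 0$ and $a(\varphi)=0$.

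For the backward direction, suppose $\varphi$ is parabolic. Non-realization of $a(\varphi)$ means $\varphi$ has no fixed point in $\mathcal{T}(S)$, so $\varphi$ is not elliptic and therefore of infinite order. Since $a(\varphi)=0$ rules out hyperbolic, Theorem \ref{thm:irreducible} forces $\varphi$ to be reducible. To show periodicity on each reduced component, argue by contradiction: suppose some component $R$ supports a pseudo-Anosov action $\varphi^n|_R$ with dilatation $K>1$. For any $X\in\mathcal{T}(S)$, homotope an extremal Lipschitz map $X\to\varphi^nX$ to carry $R$ to itself (possible because $\partial R\subset\Gamma$ is $\varphi^n$-invariant); the restriction gives a map $X|_R\to(\varphi^nX)|_R$ in $\mathcal{T}(R)$ with Lipschitz constant no larger. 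By Theorem \ref{thm:ab} applied on $\mathcal{T}(R)$, this restricted Lipschitz constant is at least $K$, so $d_L(X,\varphi^nX)\ge\log K$ for every $X$. Combining this with $a(\varphi^n)\le n\,a(\varphi)$, which follows from the triangle inequality and the isometry of the $\varphi$-action, yields $a(\varphi)\ge \log K/n>0$, contradicting parabolicity.

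The main technical obstacle is the quantitative pinching estimate in the forward direction, namely that realizing a bounded boundary twist at a short closed geodesic costs arbitrarily little Lipschitz distortion as the length goes to zero. This reduces to the classical collar lemma and an explicit shear computation in collar coordinates. A secondary point in the backward direction is the homotopic deformation of an extremal Lipschitz map so as to preserve the subsurface $R$; this is standard because $\partial R$ is a $\varphi^n$-invariant multicurve, so the extremal map can be adjusted in a collar of $\partial R$ without increasing its Lipschitz constant.
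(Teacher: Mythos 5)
The forward direction of your argument is correct but follows a genuinely different route from the paper. You build explicit pinched structures $X_\epsilon$ (Nielsen-realized equivariant metrics on the periodic pieces, glued with boundary lengths $\epsilon$) and estimate the Lipschitz cost of a bounded twist across a collar of width $\sim\log(1/\epsilon)$; this shear computation does give Lipschitz constant tending to $1$, so $a(\varphi)=0$, and non-realization then follows since a point attaining $0$ would be a fixed point, forcing finite order. The paper avoids this construction entirely: it quotes Bers' result that $b(\varphi)=0$ for an infinite-order reducible class with no pseudo-Anosov piece and invokes Wolpert's inequality $a(\varphi)\le 2b(\varphi)$. Your version is more self-contained and quantitative; the paper's is shorter but leans on the Teichm\"uller-metric theory.

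The backward direction contains a genuine gap at the step ruling out pseudo-Anosov components. You propose to homotope an extremal Lipschitz map $X\to\varphi^n X$ so that it carries the subsurface $R$ to itself ``without increasing its Lipschitz constant,'' restrict it to get a map in $\mathcal{T}(R)$, and then apply Theorem \ref{thm:ab} on $\mathcal{T}(R)$. Neither step is justified: an extremal map can send points of $R$ far outside the corresponding subsurface of $\varphi^n X$, and forcing it back into $R$ by a homotopy supported in a collar of $\partial R$ will in general increase the Lipschitz constant; moreover $R$ is a surface with boundary, where Thurston's theory (in particular the equality between the best Lipschitz constant and the supremum of length ratios, and the results of this paper, which are stated for punctured surfaces) does not transfer automatically. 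The paper sidesteps all of this by working on $S$ itself: if $\lambda_1$ is the stable lamination of the pseudo-Anosov piece, viewed as a measured lamination on $S$, then $\varphi^{-n}(\lambda_1)=K^n\lambda_1$, so for every $X$,
\[
d_L(X,\varphi^n X)\ \ge\ \log\frac{\ell_X(\varphi^{-n}(\lambda_1))}{\ell_X(\lambda_1)}\ =\ n\log K,
\]
whence $a(\varphi)\ge\log K>0$ directly from the length-ratio formula for $d_L$, with no restriction to subsurfaces needed. Replacing your restriction argument by this length-ratio estimate closes the gap; the remaining steps of your backward direction (infinite order from non-realization, reducibility from Theorem \ref{thm:irreducible}) agree with the paper.
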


  To show that a reducible map $\varphi\in \mathrm{Mod}(S)$ cannot attain $b(\varphi)$ in Teichm\"uller space, Bers  \cite{Bers}  used the notion of Nielsen extension of Riemann surfaces to show that for any  $X\in\mathcal{T}(S)$, 
 one can ``shrink" the reduced curves to get a new Riemann surface $Y$ satisfying $d_T(Y, \varphi(Y))< d_T(X, \varphi(X))$. Bers' construction does not apply to the setting of Thurston's asymmetric metric. We shall see in \S \ref{s:red-hyp} of this paper that there exists a reducible map $\varphi$ which is hyperbolic with respect to the asymmetric $d_L$, and therefore for which $a(\varphi)$ is realized.

These results give quite a good picture of the behavior of the action of a mapping class with respect to Thurston's asymmetric metric, in terms of Thurston's classification. It remains to study the case where $\varphi$ is reducible and has at least one pseudo-Anosov  reduced component. 
 In the setting of the Teichm\"uller metric, to show that a hyperbolic map $\varphi\in \mathrm{Mod}(S)$ ($b(\varphi)>0 $ and realized) leaves a  Teichm\"uller
geodesic invariant, Bers \cite{Bers} used the fact that any two points in Teichm\"uller space are connected by a unique Teichm\"uller geodesic. Since Thurston's asymmetric metric is not  uniquely geodesic, Bers' argument does not work here.  In the paper \cite{DW}  by Daskalopoulos and Wentworth,  the strict convexity property of hyperbolic length functions along Weil-Petersson geodesics is used to show that a reducible map cannot leave a Weil-Petersson geodesic invariant. We don't know whether geodesic length functions are convex along  $d_L$-geodesics.

\begin{questions} 
\begin{enumerate}
We propose the following:

\item Is Theorem \ref{thm:ab} true in general ?

\item If $\varphi$ is reducible  and has at least one pseudo-Anosov  reduced component, is it pseudo-hyperbolic? Can it be hyperbolic?

\item If $\varphi$ is hyperbolic, does it leave a $d_L$-geodesic invariant?
is it pseudo-Anosov? If  $\varphi$ is of infinite order and leaves a $d_L$-geodesic (or even a stretch line) invariant, is it pseudo-Anosov?
\end{enumerate}
\end{questions}

In \S \ref{s:red-hyp}, we give an example, on any surface of genus $\geq 2$, of a reducible mapping class which is of hyperbolic type.

It is known that closed Teichm\"uller geodesics in moduli space correspond to conjugacy classes of pseudo-Anosov maps. The above questions are related to the question of whether closed $d_L$-geodesics in moduli space  correspond to conjugacy classes of pseudo-Anosov maps.

\section{Irreducible maps} 
In this section, we prove Theorems \ref{thm:irreducible} and \ref{thm:hyperbolic}. 

Let $\mathcal{S}$ be the set of homotopy classes of  essential simple closed curves on $S$.  A finite non-empty subset $\{C_1, \cdots, C_k\} \subset \mathcal{S}$ is called \emph{admissible} if $C_i \neq C_j$ for all $i \neq j$ and if $C_1, \cdots, C_k$ can be realized simultaneously as mutually disjoint curves. We call $\varphi\in \mathrm{Mod}(S)$ \emph{reducible} if there is an admissible set $\{C_1, \cdots, C_k\} $ such that
$$\varphi(\{C_1, \cdots, C_k\} )=\{C_1, \cdots, C_k\} .$$

An element $\varphi\in \mathrm{Mod}(S)$ is called \emph{irreducible} if there is no admissible set reduced by $\varphi$.

We shall use the following hyperbolic geometry estimate: Given a simple closed geodesic $\alpha$ of length $\ell_\alpha$ on a hyperbolic surface, there exists a collar neighborhood about $\alpha$ of width $2\omega_\alpha$, with 
$$\sinh \omega_\alpha \sinh (\ell_\alpha/2)=1.$$
See Buser \cite{Buser} for a reference.  It follows that for  small $\ell_\alpha$, the length of any simple closed curve that intersects $\alpha$ is large. The following consequence is well-known:
\begin{lemma} \label{lem:collar} 
There is a universal constant $\delta_0>0$ such that any two distinct simple closed geodesics on a hyperbolic surface with hyperbolic length less than $\delta_0$ are disjoint.
\end{lemma}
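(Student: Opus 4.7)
The plan is to derive Lemma 2.2 directly from the quantitative collar estimate recalled just before its statement, by arguing that two intersecting short geodesics would force each of them to be long, contradicting the assumption.

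First, suppose for contradiction that two distinct simple closed geodesics $\alpha$ and $\beta$, both of hyperbolic length less than some threshold $\delta_0$ to be chosen, intersect. Since they are distinct simple geodesics, they meet transversally. Apply the collar estimate to $\alpha$: there is an embedded annular collar around $\alpha$ of width $2\omega_\alpha$, where $\sinh\omega_\alpha\sinh(\ell_\alpha/2)=1$. Each transverse crossing of $\beta$ with $\alpha$ forces $\beta$ to enter and exit this collar, contributing a subarc of length at least $2\omega_\alpha$. Hence
\[
\ell_\beta\ \geq\ 2\omega_\alpha\ =\ 2\sinh^{-1}\!\left(\frac{1}{\sinh(\ell_\alpha/2)}\right).
\]

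Next, combine this with the assumed smallness of $\ell_\alpha$. Since $x\mapsto\sinh^{-1}(1/\sinh(x/2))$ is decreasing in $x$, the assumption $\ell_\alpha<\delta_0$ yields
\[
\ell_\beta\ \geq\ 2\sinh^{-1}\!\left(\frac{1}{\sinh(\delta_0/2)}\right).
\]
I would then choose $\delta_0$ so that the right-hand side is itself at least $\delta_0$, contradicting $\ell_\beta<\delta_0$. This reduces to requiring $\sinh(\delta_0/2)\leq 1$, so any choice $\delta_0\leq 2\sinh^{-1}(1)=2\log(1+\sqrt{2})$ works. Taking, say, $\delta_0=2\log(1+\sqrt{2})$ (or any strictly smaller positive constant to ensure a strict contradiction) gives a universal constant with the desired property.

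There is no real obstacle here: once one has the collar lemma in the explicit form stated above, the whole argument is a one-line computation comparing $\sinh(\delta_0/2)$ with $1/\sinh(\delta_0/2)$. The only mild point to check carefully is that transversality of distinct simple closed geodesics is automatic (two simple closed geodesics that are homotopically distinct must meet transversally, if at all), so that the collar-crossing argument applies and we indeed get $\ell_\beta\geq 2\omega_\alpha$ from a single intersection point.
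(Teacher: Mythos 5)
Your proof is correct and follows exactly the route the paper intends: the paper states the lemma as a well-known consequence of the collar estimate $\sinh\omega_\alpha\sinh(\ell_\alpha/2)=1$ and gives no further argument, and you simply supply the standard details (each crossing of $\alpha$ costs $\beta$ at least $2\omega_\alpha$ in length, and $\delta_0=2\log(1+\sqrt{2})$ makes this incompatible with $\ell_\alpha,\ell_\beta<\delta_0$). The computation and the choice of constant are right.
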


We also need the following theorem. See Bers \cite{Bers2} for a proof.
\begin{theorem}[Mumford's compactness theorem]
For each $\epsilon>0$,  the subset 
$$\mathcal{M}_{\epsilon}(S)=\{X\in \mathcal{M}(S):  \underline\ell(X)\geq \epsilon \}\subset \mathcal{M}(S)$$
is compact.
\end{theorem}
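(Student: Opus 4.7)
The plan is the standard Bers coordinates argument, using that $\underline\ell(X)$ denotes the length of the shortest closed geodesic on $X$ (the systole). The crucial input is Bers' theorem on the existence of short pants decompositions: there is a constant $B = B(g,n)$ depending only on the topology of $S$ such that every hyperbolic structure $X$ on $S$ admits a pants decomposition $P(X) = \{\gamma_1, \ldots, \gamma_{3g-3+n}\}$ with $\ell_X(\gamma_i) \leq B$ for every $i$. I would invoke this as a black box.

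Given a sequence $[X_n] \in \mathcal{M}_\epsilon(S)$, I lift it to a sequence $X_n \in \mathcal{T}(S)$ and apply Bers' theorem to obtain pants decompositions $P_n$ satisfying $\ell_{X_n}(\gamma) \leq B$ for each curve $\gamma$ of $P_n$. Since there are only finitely many topological types of pants decompositions on $S$, equivalently finitely many $\mathrm{Mod}(S)$-orbits in the set of pants decompositions, I may pass to a subsequence so that the $P_n$ all belong to a single orbit, and then choose $\phi_n \in \mathrm{Mod}(S)$ with $\phi_n(P_n) = P$ for a fixed reference pants decomposition $P$. Replacing $X_n$ by $\phi_n(X_n)$ does not change $[X_n]$, so I may assume throughout that $P$ itself is a Bers pants decomposition of each $X_n$.

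Next, using Fenchel--Nielsen coordinates $(\ell_i^n, \tau_i^n)$ with respect to $P$, I obtain the two-sided bound $\epsilon \leq \ell_i^n \leq B$: the upper bound is Bers', while the lower bound comes from $\underline\ell(X_n) \geq \epsilon$ applied to the curves of $P$. The twist parameters $\tau_i^n$ are a priori unbounded, but this is harmless: the subgroup of $\mathrm{Mod}(S)$ generated by Dehn twists along the curves of $P$ fixes $P$ and shifts each $\tau_i^n$ by integer multiples of $\ell_i^n$. After further composition with such Dehn twists, which again does not change $[X_n]$, I may assume $0 \leq \tau_i^n \leq \ell_i^n \leq B$ for every $i$ and $n$.

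At this point the full Fenchel--Nielsen coordinate vector of $X_n$ lies in the compact box $[\epsilon, B]^{3g-3+n} \times [0, B]^{3g-3+n}$, so a subsequence converges to some $X_\infty \in \mathcal{T}(S)$, whence $[X_n] \to [X_\infty]$ in $\mathcal{M}(S)$. Because the systole function is continuous on $\mathcal{T}(S)$ and descends to $\mathcal{M}(S)$, the limit satisfies $\underline\ell([X_\infty]) \geq \epsilon$, so $[X_\infty] \in \mathcal{M}_\epsilon(S)$ and the subset is sequentially compact, hence compact. The main obstacle is really packaged into Bers' theorem on short pants decompositions, whose proof relies on a careful surgery argument in hyperbolic geometry; once this is granted, the remainder is a bookkeeping exercise with the action of $\mathrm{Mod}(S)$ on Fenchel--Nielsen coordinates.
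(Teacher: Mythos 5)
Your argument is correct: the paper itself gives no proof of this statement but simply cites Bers' note \emph{A remark on Mumford's compactness theorem} \cite{Bers2}, and the proof you propose --- bounded pants decompositions via the Bers constant, reduction to a single $\mathrm{Mod}(S)$-orbit of pants decompositions, normalization of twists by Dehn twists, and compactness of the resulting Fenchel--Nielsen box --- is precisely the argument of that reference. Nothing further is needed beyond the black box you acknowledge (existence of the Bers constant $B(g,n)$).
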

Here $\underline\ell(X) $ denotes the length of the shortest essential simple closed curve of a hyperbolic surface $X$. Note that this quantity is well-defined for elements $X$ in 
 $\mathcal{T}(S)$ or in $\mathcal{M}(S)$.

For each $\epsilon>0$, the subset $$\mathcal{T}_{\epsilon}(S)=\{X\in \mathcal{T}(S):  \underline\ell(X)\geq \epsilon \}$$
is called the \emph{$\epsilon$-thick part} of $ \mathcal{T}(S)$. Mumford's compactness theorem is equivalent to saying that for any sequence $(X_k)_{k\geq 1}$ in an $\epsilon$-thick part of $ \mathcal{T}(S)$, there is a subsequence of $(X_k)_{k\geq 1}$, still denoted by $(X_k)_{k\geq 1}$, and a sequence of $(\varphi_k)_{k\geq 1}$ in $ \mathrm{Mod}(S)$, such that $(\varphi_k(X_k))_{k\geq 1}$ converges in $ \mathcal{T}(S)$. 

\begin{lemma} \label{lem:systol} 
Let $\varphi\in \mathrm{Mod}(S)$ be irreducible and $X\in \mathcal{T}(S)$. Then, with $\delta_0$ being the constant given in Lemma \ref{lem:collar}, we have:
$$\left( L(X, \varphi(X))\right)^{3g-3+n} \geq \delta_0/\underline\ell(X).$$
\end{lemma}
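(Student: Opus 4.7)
Write $L = L(X,\varphi(X))$ for brevity. The plan is to argue by contradiction: assuming $L^{3g-3+n} < \delta_0/\underline{\ell}(X)$, I will manufacture a $\varphi$-invariant admissible set of simple closed curves, contradicting the hypothesis that $\varphi$ is irreducible. The key input is Thurston's formula already recalled in the introduction, which gives for every essential simple closed curve $\gamma$
\[
\ell_{\varphi(X)}(\gamma) \leq L\, \ell_X(\gamma).
\]
Combined with the mapping-class-group identity $\ell_{\varphi(X)}(\gamma)=\ell_X(\varphi^{-1}\gamma)$, this becomes $\ell_X(\varphi^{-1}\gamma)\leq L\,\ell_X(\gamma)$, which I will iterate.

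Let $\alpha_0$ be a shortest simple closed geodesic on $X$, so $\ell_X(\alpha_0)=\underline{\ell}(X)$, and set $\alpha_k = \varphi^{-k}(\alpha_0)$ for $k\geq 0$. Iterating the above inequality yields $\ell_X(\alpha_k)\leq L^k\,\underline{\ell}(X)$. Under the contradiction hypothesis, every $\alpha_k$ with $0\leq k\leq 3g-3+n$ satisfies $\ell_X(\alpha_k) < \delta_0$. Lemma \ref{lem:collar} then forces any two distinct curves in the list $\alpha_0,\alpha_1,\ldots,\alpha_{3g-3+n}$ to be disjoint. Since a system of pairwise disjoint essential simple closed curves on $S$ has cardinality at most $3g-3+n$, two of these $3g-3+n+1$ entries must coincide: there exist $0\leq i<j\leq 3g-3+n$ with $\varphi^{-i}\alpha_0=\varphi^{-j}\alpha_0$, hence $\varphi^{m}\alpha_0=\alpha_0$ for some $1\leq m\leq 3g-3+n$.

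Take the smallest such $m$ and consider the orbit $\mathcal{O}=\{\alpha_0,\varphi\alpha_0,\ldots,\varphi^{m-1}\alpha_0\}$. By minimality of $m$ these $m$ curves are pairwise distinct, and the set $\mathcal{O}$ is $\varphi$-invariant by construction. Using $\varphi^m\alpha_0=\alpha_0$, each $\varphi^{j}\alpha_0$ with $0\leq j\leq m-1$ can be rewritten as $\varphi^{-(m-j)}\alpha_0$ with $0\leq m-j\leq m\leq 3g-3+n$, so every curve in $\mathcal{O}$ appears in our short list and therefore has length strictly less than $\delta_0$. Applying Lemma \ref{lem:collar} once more, the curves in $\mathcal{O}$ are pairwise disjoint, so $\mathcal{O}$ is admissible. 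This admissible set is preserved by $\varphi$, contradicting irreducibility; hence $L^{3g-3+n}\geq \delta_0/\underline{\ell}(X)$.

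The only point requiring slight care is the rewriting step in the last paragraph: the inequality $\ell_X(\varphi^{-1}\gamma)\leq L\ell_X(\gamma)$ lets us control \emph{backward} iterates but not forward ones, so a naive argument bounding $\ell_X(\varphi^k\alpha_0)$ would fail. The trick is to use the periodicity relation $\varphi^m\alpha_0=\alpha_0$ to reindex every element of the forward orbit $\mathcal{O}$ as a backward iterate of index at most $3g-3+n$, which then inherits the short-length bound. Everything else is a clean combination of the collar lemma with the pigeonhole observation that at most $3g-3+n$ pairwise disjoint essential simple closed curves can coexist on $S$.
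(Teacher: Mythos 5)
Your proof is correct and follows essentially the same route as the paper's: iterate the inequality $\ell_X(\varphi^{-1}\gamma)\leq L\,\ell_X(\gamma)$ to produce $3g-2+n$ short backward iterates of a systole, apply the collar lemma and pigeonhole to find a periodic orbit, and conclude that this orbit is an admissible $\varphi$-invariant set. Your reindexing of the forward orbit as backward iterates is just a more carefully written version of the paper's step asserting that $\{\alpha_1,\dots,\alpha_{j-1}\}$ is admissible and invariant.
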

\begin{proof}
Suppose that 
\begin{equation}\label{equ:systol} 
\left( L(X, \varphi(X))\right)^{3g-3+n} < \delta_0/\underline\ell(X).
\end{equation}
Let $\alpha\in  \mathcal{S}$ such that $\ell_X(\alpha)=\underline\ell(X)$. Consider the curves 
$\alpha_1=\alpha$, $\alpha_2=\varphi^{-1}(\alpha_1)$, $\cdots$, $\alpha_{3g-2+n}=\varphi^{-1}(\alpha_{3g-3+n})$. Since $$\ell_{X}(\varphi^{-1}(\alpha))=\ell_{\varphi(X)}(\alpha)\leq L(X, \varphi(X)) \ell_X(\alpha),$$
each $ \alpha_i$ satisfies
$$\ell_{X}(\alpha_i)\leq L(X, \varphi(X))^{i-1} \ell_X(\alpha).$$
By $(\ref{equ:systol} )$,  we have
$$\ell_X(\alpha_i) < \delta_0,  i=1, \cdots, 3g-2+n.$$
By Lemma \ref{lem:collar}, the curves $\alpha_1, \alpha_2, \cdots, \alpha_{3g-2+n}$ are mutually disjoint. Since there are at most $3g-3+n$ isotopy class of pairwise disjoint simple closed curves in $S$, there is a least number $1\leq j\leq 3g-3+n$ such that  $\alpha_j=\alpha_1$.

Note that $\{\alpha_1, \cdots, \alpha_{j-1}\}$  is admissible and $$\varphi(\{\alpha_1, \cdots, \alpha_{j-1}\})=\{\alpha_1, \cdots, \alpha_{j-1}\}.$$ As a result, $\varphi$ is reducible, which  contradicts  the assumption.
\end{proof}

\bigskip
\begin{proof}[Proof of Theorem \ref{thm:irreducible}]
By assumption, $\varphi$ is irreducible.  Consider a sequence $(X_k)_{k\geq 1}$ in $ \mathcal{T}(S)$ with 
\begin{equation}\label{equ:xk} 
\lim_{k\to \infty} d_L(X_k, \varphi(X_k))=a(\varphi).
\end{equation}
There is a constant $M>1$ such that for any $k \geq 1$, $L(X_k, \varphi(X_k))\leq M$.
By Lemma \ref{lem:systol}, 
$$\underline\ell(X_k)\geq \frac{\delta_0}{\left(L(X_k, \varphi(X_k))\right)^{3g-3+n}}\geq \frac{\delta_0}{M^{3g-3+n}}.$$
Therefore, the sequence $(X_k)$ lies in a thick part of $ \mathcal{T}(S)$.

Now we use a compactness argument of Bers \cite{Bers} to show that $a(\varphi)$ attains its infimum  in $ \mathcal{T}(S)$.

By Mumford's compactness theorem, there is a subsequence of $(X_k)_{k\geq 1}$, still denoted by $(X_k)_{k\geq 1}$, and a sequence $(\varphi_k)_{k\geq 1}$ in $\mathrm{Mod}(S)$, such that $(\varphi_k(X_k))_{k\geq 1}$ converges to some point $Y\in \mathcal{T}(S)$. Let $Y_k=\varphi_k(X_k)$.  Since $\mathrm{Mod}(S)$ acts isometrically with respect to Thurston's asymmetric metric, we have
$$d_L(X_k, \varphi(X_k))=d_L(\varphi_k(X_k), \varphi_k\circ \varphi(X_k))=d_L(Y_k, \varphi_k\circ \varphi \circ \varphi_k^{-1} (Y_k)).$$
By $(\ref{equ:xk} )$, we have
\begin{equation}\label{equ:yk} 
\lim_{k\to \infty} d_L(Y_k, \varphi_k\circ \varphi \circ \varphi_k^{-1} (Y_k))=a(\varphi).
\end{equation}

Since 
$$d_L(Y, \varphi_k\circ \varphi \circ \varphi_k^{-1} (Y_k))\leq d_L(Y, Y_k)+d_L(Y_k, \varphi_k\circ \varphi \circ \varphi_k^{-1} (Y_k))\leq  M_1 $$
for some sufficiently large constant $M_1$ and since $\mathcal{T}(S)$ is locally compact, up to a subsequence, we can assume that $(\varphi_k\circ \varphi \circ \varphi_k^{-1} (Y_k))_{k\geq 1}$ converges to some $Z \in \mathcal{T}(S)$.
Since $(Y_k)_{k\geq 1}$ converges to $Y$, we conclude that the sequence $(\varphi_k\circ \varphi \circ \varphi_k^{-1} (Y))_{k\geq 1}$ converges to $Z$.
For any $\epsilon>0$, there is a number $N>0$, such that for any $j,k >N$, 
$$d_L(\varphi_j\circ \varphi \circ \varphi_j^{-1} (Y), \varphi_k\circ \varphi \circ \varphi_k^{-1} (Y))< \epsilon.$$
Since $\mathrm{Mod}(S)$ acts properly discontinuously on $ \mathcal{T}(S)$, we have, for sufficiently large $k_0$, 
$$\varphi_k\circ \varphi \circ \varphi_k^{-1}=\varphi_{k_0}\circ \varphi \circ \varphi_{k_0}^{-1},  \ \forall \ k\geq k_0.$$
It follows from $(\ref{equ:yk})$ that 
$$d_L(Y,\varphi_{k_0}\circ \varphi \circ \varphi_{k_0}^{-1}(Y) )=a(f).$$
\end{proof}

\begin{corollary}\label{coro:reducible2}
Suppose that $\varphi\in \mathrm{Mod}(S)$ is parabolic or pseudo-hyperbolic and suppose that  $(X_k)_{k\geq 1}$ is a sequence in $ \mathcal{T}(S)$ with 
$$\lim_{k\to \infty} d_L(X_k, \varphi(X_k))=a(\varphi).$$
Then for any $\epsilon>0$, $X_k$ leaves 
$\mathcal{T}_{\epsilon}(S)$ for any sufficiently large $k$.
\end{corollary}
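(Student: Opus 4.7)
The plan is to argue by contradiction, essentially running the compactness part of the proof of Theorem \ref{thm:irreducible} in reverse. Suppose, contrary to the claim, that there exists some $\epsilon > 0$ and a subsequence of $(X_k)_{k\geq 1}$ (still denoted $(X_k)$) entirely contained in $\mathcal{T}_{\epsilon}(S)$. I would then show that this forces $a(\varphi)$ to be realized, contradicting the assumption that $\varphi$ is parabolic or pseudo-hyperbolic.

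The first step is to apply Mumford's compactness theorem: passing to a further subsequence, there exist $\varphi_k \in \mathrm{Mod}(S)$ such that $Y_k := \varphi_k(X_k) \to Y \in \mathcal{T}(S)$. Since $\mathrm{Mod}(S)$ acts on $\mathcal{T}(S)$ by isometries of $d_L$, setting $\psi_k := \varphi_k\circ\varphi\circ\varphi_k^{-1}$ gives
\[
d_L(Y_k,\psi_k(Y_k))=d_L(X_k,\varphi(X_k))\longrightarrow a(\varphi).
\]

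Next, I would bound the sequence $(\psi_k(Y))$. Using the triangle inequality,
\[
d_L(Y,\psi_k(Y))\leq d_L(Y,Y_k)+d_L(Y_k,\psi_k(Y_k))+d_L(\psi_k(Y_k),\psi_k(Y)),
\]
and since $Y_k\to Y$ (so $d_L(Y,Y_k)$ and $d_L(\psi_k(Y_k),\psi_k(Y))=d_L(Y_k,Y)$ after using the isometric action are both small) while the middle term is bounded, $(\psi_k(Y))$ lies in a left closed ball about $Y$. Since left and right closed balls in $\mathcal{T}(S)$ are compact, some subsequence of $(\psi_k(Y))$ converges in $\mathcal{T}(S)$. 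Now by proper discontinuity of $\mathrm{Mod}(S)$ on $\mathcal{T}(S)$, the mapping classes $\psi_k$ must eventually stabilize to a single class $\psi:=\psi_{k_0}$ (exactly as in the proof of Theorem \ref{thm:irreducible}). Passing to the limit yields
\[
d_L(Y,\psi(Y))=a(\varphi).
\]

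Finally, since $\psi=\varphi_{k_0}\circ\varphi\circ\varphi_{k_0}^{-1}$ is conjugate to $\varphi$ in $\mathrm{Mod}(S)$, the translation distance is conjugation-invariant, so $a(\psi)=a(\varphi)$, and $a(\psi)$ is realized at $Y$; equivalently, $a(\varphi)$ is realized at $\varphi_{k_0}^{-1}(Y)$. This contradicts the hypothesis that $a(\varphi)$ is not realized (which is built into the definitions of both the parabolic and pseudo-hyperbolic cases). The main delicate point is the bounded-orbit and proper discontinuity step, but this is entirely parallel to the argument already carried out in the proof of Theorem \ref{thm:irreducible} and requires no new ingredient; in effect, the corollary just says that the only obstruction to that compactness argument working is the escape of $(X_k)$ out of every thick part.
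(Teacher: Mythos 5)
Your proof is correct and follows essentially the same route as the paper, which simply argues by contradiction and reruns the Mumford/Bers compactness argument from the proof of Theorem \ref{thm:irreducible} on a subsequence trapped in a thick part. Your write-up is in fact more explicit than the paper's (in particular the final conjugation-invariance step showing $a(\varphi)$ is realized at $\varphi_{k_0}^{-1}(Y)$), but it introduces no new idea.
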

\begin{proof}
Otherwise, using Bers' compactness argument as we did in the proof of Theorem \ref{thm:irreducible}, we can show that $\varphi$ is elliptic or hyperbolic.
\end{proof}

\bigskip
\begin{proof}[Proof of Theorem \ref{thm:hyperbolic}]
The proof follows Bers' argument in \cite{Bers}, Theorem 5. We reproduce it here for the convenience of the reader. 
Suppose that  a $d_L$-geodesic through $X\in \mathcal{T}(S)$ is invariant under $\varphi$. 
Let $Y $ be any point in $\mathcal{T}(S)$. Since $\varphi$ is an isometry, we have
\begin{eqnarray*}
 nd_L(X, \varphi(X)) 
&=&d_L(X, \varphi(X))+ d_L(\varphi(X),\varphi^2(X))+  \cdots \\
&& + d_L(\varphi^{n-1}(X),\varphi^n(X)) \\
&=&d_L(X, \varphi^n(X)) \\
&\leq & d_L(X,Y)+ d_L(Y, \varphi^n(Y))+d_L(\varphi^n(Y),\varphi^n(X)) \\
&\leq&  d_L(X,Y)+d_L(Y, \varphi(Y)+ d_L(\varphi(Y),\varphi^2(Y))+  \cdots    \\ 
&&+ d_L(\varphi^{n-1}(Y),\varphi^n(Y))+d_L(\varphi^n(Y),\varphi^n(X))  \\
&=& d_L(X,Y)+ n d_L(Y, \varphi(Y))+d_L(Y,X).
\end{eqnarray*}
Since $n$ is arbitrary, $d_L(X, \varphi(X))\leq d_L(Y, \varphi(Y)$ for any $Y\in \mathcal{T}(S)$.
As a result, $a(f)=d_L(X, \varphi(X))$.
\end{proof}

Bers also proved that  $b(\varphi)>0$ is realized if and only if $\varphi$ leaves a Teichm\"uller geodesic invariant, cf. Bers \cite{Bers}, Theorem 5. By an analysis of  the Teichm\"uller geodesic left invariant by $\varphi$, he showed that  the initial and terminal quadratic differentials of the Teichm\"uller map in the homotopy class of $\varphi$  coincide.  It follows that $\varphi$ preserves a pair of transverse measured foliations, multiplying the transverse measure of one by $K$ (the dilation of the Teichm\"uller map) and of the other one by $1/K$. This means that  $\varphi$ is pseudo-Anosov. We will study pseudo-Anosov maps in Section \ref{sec:pa}.

\section{Laminations and Thurston's stretch maps} \label{sec:thurston} 
In this section, we  recall some necessary background on laminations and on Thurston's construction of strectch lines. 

Fix a hyperbolic structure on the surface $S$. 
A \emph{geodesic lamination} $\mu$ on $S$ is a closed subset of $S$ which is the union of disjoint simple geodesics (called the \emph{leaves} of $\mu$). A \emph{measured geodesic lamination} is a geodesic lamination $\mu$ with a transverse invariant measure of full support $\mu$.  If  a geodesic lamination $\mu$ has not all of its leaves going at both ends to cusps, then $\mu$ contains a compactly-supported sublamination admitting a transverse measure. 

The \emph{stump} of a geodesic lamination  $\mu$ is the maximal compact sublamination of $ \mu$ admitting a transverse measure. 

A geodesic lamination $\mu$ on a hyperbolic surface $X$ is \emph{complete} if its complementary regions are all isometric to ideal triangles. Associated with such a pair $(X, \mu)$ is a partial (that is, supported on a subsurface) measured foliation $F_{\mu}(X)$, satisfying the following:
\begin{enumerate}[(i)]
\item the foliation $F_{\mu}(X)$ intersects $\mu$ transversely and in each ideal triangle  of the complement of $\mu$, the leaves of $F_{\mu}(X)$  are composed of horocycles perpendicular to the boundary of the ideal triangle;
\item the non-foliated region in each ideal triangle is bounded by three pairwise tangent leaves, as in Figure \ref{horo};
\item  The transverse measure for $F_{\mu}(X)$ agrees with arc length on  $\mu$.

 \end{enumerate}

  \begin{figure}[ht!]
\centering
\psfrag{a}{\small  horocycle}
\psfrag{b}{\small horocycle of length 1}
\psfrag{c}{\small unfoliated region}
\includegraphics[width=.60\linewidth]{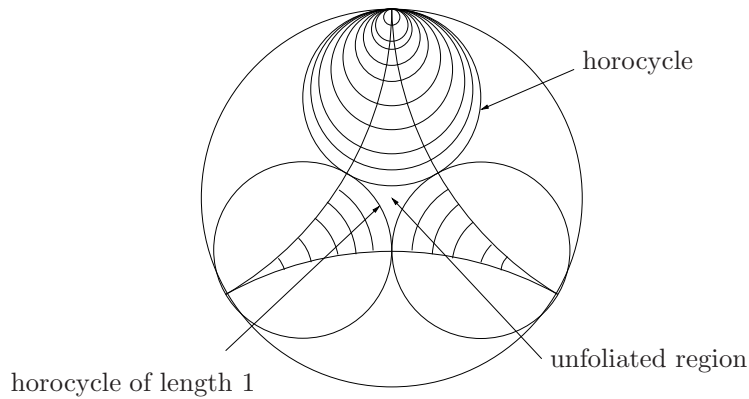}
\caption{\small {The horocyclic foliation.}}
\label{horo}
\end{figure}

By collapsing each non-foliated region of  $F_{\mu}(X)$ onto a tripod,  we get a measured foliation on $S$ of full support, still denoted by $F_{\mu}(X)$, which is well defined up to isotopy and which is called the \emph{ horocyclic foliation} corresponding to  $(X, \mu)$.  Note that since the hyperbolic structures we consider are complete, each puncture of $X$ has a neighborhood isometric to a cusp. If the set of punctures of $S$ is nonempty, each complete geodesic lamination on $S$ has some leaves going to cusps. The corresponding horocylic foliations are standard near the cusps, meaning that  in a neighborhood of each cusp, the leaves are circles  homotopic to the puncture, and the total transverse measure of an arc converging to the cusp is infinite.

We denote by $\mathcal{MF}(\mu)$ the space of equivalence classes of measured foliations that are transverse to $\mu$  and standard near the cusps.
 Thurston \cite{Thurston2} proved the following fundamental result.
\begin{theorem}\label{thm:thurston1}
The map $\mathbf{\Phi_\mu}:  \mathcal{T} (S) \to \mathcal{MF}(\mu): X \to F_\mu(X)$ is a homeomorphism.
\end{theorem}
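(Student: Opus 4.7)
The plan is to exhibit an explicit continuous inverse $\Psi_\mu: \mathcal{MF}(\mu) \to \mathcal{T}(S)$ and then verify that $\Psi_\mu \circ \Phi_\mu$ and $\Phi_\mu \circ \Psi_\mu$ are both the identity. The starting observation is that, because $\mu$ is complete, its complementary regions in $S$ are all isometric (once we fix any admissible hyperbolic structure on $S$) to ideal triangles, and each ideal triangle carries a canonical horocyclic foliation: three families of horocyclic arcs perpendicular to its sides, meeting along three pairwise tangent horocycles that bound a central non-foliated region, as in Figure \ref{horo}. Thus the geometry \emph{inside} each complementary region is rigidly prescribed by $\mu$ alone; what is free is only the way adjacent ideal triangles are glued along leaves of $\mu$.

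To define $\Psi_\mu(F)$, I would specify this gluing by a shear parameter along every leaf $\ell$ of $\mu$. For any short arc $\tau$ crossing $\ell$ from one complementary triangle to a neighboring one, the relative translation between the two triangles along $\ell$ is taken to be the transverse measure of $F$ along $\tau$, measured with a sign coming from the orientation of $\tau$ relative to the horocycles. Because $F$ is transverse to $\mu$, holonomy of this transverse measure is consistent on each leaf; because $F$ is standard near the cusps, the resulting gluing near each puncture is a genuine cusp, so the assembled surface is a complete, finite-area hyperbolic structure on $S$. This defines $\Psi_\mu(F) \in \mathcal{T}(S)$.

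I would then check the two compositions. By construction, the horocyclic foliation of $\Psi_\mu(F)$ inside each ideal triangle is the standard one, and the transverse measure across each leaf of $\mu$ reproduces the shear, i.e.\ the transverse measure of $F$; hence $\Phi_\mu(\Psi_\mu(F)) = F$ in $\mathcal{MF}(\mu)$. Conversely, given $X \in \mathcal{T}(S)$, cut along $\mu$ recovers the ideal triangles together with their horocyclic foliations, and the shear between adjacent triangles along a leaf of $\mu$ is, by definition of $F_\mu(X)$, the transverse measure that $F_\mu(X)$ assigns to an arc crossing that leaf; hence $\Psi_\mu(\Phi_\mu(X)) = X$. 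Continuity of $\Phi_\mu$ reduces to continuity of hyperbolic length and of horocyclic transverse measure as functions on $\mathcal{T}(S)$; continuity of $\Psi_\mu$ reduces to the fact that gluing ideal triangles with continuously-varying shears produces continuously-varying hyperbolic structures.

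The main technical obstacle will be to make the shear construction rigorous along leaves that are dense in a minimal sublamination of $\mu$: there the ``arc $\tau$ crossing $\ell$'' is not uniquely determined, and well-definedness of $\Psi_\mu(F)$ requires a limiting argument approximating $\mu$ by laminations with isolated leaves and checking that the transverse measure of $F$ depends only on homotopy class rel endpoints on $\mu$. A secondary subtlety is to verify that the assembled metric is genuinely complete of finite area, for which the standard-near-cusp condition on $F$ (total transverse measure infinite toward each puncture) is essential; without it one could produce incomplete structures, or cusps could open up into funnels. Once these two points are handled, the proof becomes a verification in Thurston's shear coordinates.
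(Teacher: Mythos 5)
The paper does not actually prove Theorem \ref{thm:thurston1}: it is stated as a quoted result and the reader is referred to \cite{Thurston2}. So the only meaningful comparison is with Thurston's own argument, and your plan --- reconstruct the hyperbolic structure from $F$ by regluing the ideal triangles of $S\setminus\mu$ with shears read off from $F$, then check the two compositions --- is indeed the route taken there (and the one made fully rigorous later by Bonahon via transverse cocycles). As a strategy it is the right one.

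As a proof, though, two points need repair. First, the recipe ``the shear along $\ell$ is the transverse measure of $F$ along a short arc $\tau$ crossing $\ell$'' fails as literally stated: the leaves of $F$ meet $\ell$ transversally, so the $F$-measure of a short transversal to $\ell$ depends entirely on the choice of $\tau$ and can be made zero by running $\tau$ along a leaf of $F$. The shear between two adjacent triangles is instead the $F$-measure of the sub-arc of $\ell$ \emph{itself} cut off by the two singular leaves of $F$ issuing from the non-foliated regions on either side (equivalently, a signed relative count for an arc joining those two non-foliated regions); this is precisely where condition (iii) in the definition of $F_\mu(X)$ --- that the transverse measure agrees with arc length on $\mu$ --- does its work. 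Second, and more seriously, the difficulty you defer to your last paragraph is the entire content of the theorem: inside a minimal component of $\mu$ no two complementary triangles are adjacent along a single leaf, so ``a shear for each leaf'' is data indexed by an uncountable set, and the relative position of two triangles is recovered only from a conditionally convergent sum of infinitesimal shears over all the leaves separating them. Convergence rests on the geometric decay of the widths of the spikes crossed by a transversal (exponential divergence of geodesics), and one must further show that the resulting developing map is injective and the assembled structure complete and of finite area, which is where the standard-near-the-cusps hypothesis enters. Until those estimates are supplied, $\Psi_\mu$ is not yet defined, so what you have is an accurate outline of Thurston's proof rather than a proof; for the write-up you should either carry out the convergence argument or do as the paper does and cite \cite{Thurston2} directly.
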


We also recall the following definition from Thurston  \cite{Thurston2}.  A geodesic lamination $\lambda$ is \emph{totally transverse}  to $\mu$ if each leaf of $\lambda$ intersects $\mu$ transversely infinitely often and if each leaf of $\mu$ which does not go to a cups intersects $\lambda$ transversely infinitely often. (In counting intersections, the leaves are parametrized by the reals. In particular, with this convention, a simple closed geodesic meets any transverse arc  infinitely often.)

By a result of Thuston \cite{Thurston2}, $\mathcal{MF}(\mu)$ can be identified with the space of  (equivalence classes) of measured laminations with compact support and totally transverse to $\mu$. 

From now on, we always assume that a measured geodesic lamination has compact support.

We shall say that a measured geodesic lamination $\lambda$ is \emph{supported by} a geodesic lamination $\mu$ if $\lambda$ is a sublamination of $\mu$.
 
We fix a complete geodesic lamination $\mu$ on $S$. 

The  \emph{stretch line} directed by $ \mu$ and passing through $X\in  \mathcal{T} (S)$ is  the (image of a) path $$\mathbb{R} \ni t \to X_t= \mathbf{\Phi_\mu}^{-1}(e^t F_\mu(X)).$$
We shall call a segment of a stretch line a \emph{ stretch path}.
Stretch lines are  geodesics for the Thurston asymmetric metric. More precisely, suppose that $\mu$ supports a measured geodesic lamination $\lambda$.  Thus, for any two points $X_s, X_t, s\leq t$ on the stretch line, the Lipschitz distance $d_L(X_s, X_t)$ is equal to $t-s$ and realized by 
$$\log  \frac{\ell_{X_t} (\lambda)}{\ell_{X_s} (\lambda)}.$$

In fact, laminations that realize the Lipschitz distance $d_L(X,Y)$ (or, equivalently, that maximize the ratio of length) have been  characterized by Thurston. A geodesic lamination $\mu$ is \emph{ ratio-maximizing} for $X$ and $ Y$ if there is an $L(X,Y)$-Lipschitz map (homotopic to the identity map) from a neighborhood of $\mu$  in $X$ to a neighborhood of $\mu$ in $Y$. A geodesic lamination $\mu$ is called \emph{ chain-recurrent} if for any $\epsilon>0$ and for any $p\in \mu$ there is a closed $\epsilon$-trajectory of $\mu$ through $p$, that is, a closed unit speed path in the surface such that for any interval of length $1$ on the path there is an interval of length $1$ on some leaf of $\mu$ such that the two paths remain within $\epsilon$ of each other in the $C^1$ sense. 
By a result of Thurston \cite{Thurston2}, for any  two distinct  points $X, Y\in \mathcal{T} (S)$, there is a unique maximal (in the sense of inclusion) ratio-maximizing chain-recurrent lamination $\mu(X,Y)$ which contains all other ratio-maximizing chain-recurrent laminations.

With the above definitions, we can state the following theorem of Thurston.
\begin{theorem}[Thurston \cite{Thurston2}] \label{thm:thurston2}
Any two points $X,Y \in \mathcal{T} (S)$ can be connected by a $d_L$-geodesic which consists of a finite concatenation of stretch paths. Moreover, each of these stretch paths stretches along some complete lamination containing the unique ratio-maximizing chain-recurrent lamination $\mu(X,Y)$.
\end{theorem}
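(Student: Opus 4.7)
The plan is to build the $d_L$-geodesic from $X$ to $Y$ iteratively, as a finite concatenation of stretch paths, each stretching along a complete lamination that contains the distinguished ratio-maximizing chain-recurrent lamination $\mu(X,Y)$. At each step one stretches maximally along a chosen complete extension of the currently ratio-maximizing lamination, then switches to a fresh complete extension as soon as continuing would leave the set of geodesic paths to $Y$.

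For the first step, extend $\mu(X,Y)$ to a complete geodesic lamination $\mu^{(1)}$; such an extension exists because the finitely many complementary components of $\mu(X,Y)$ can be triangulated into ideal triangles by adding finitely many geodesic leaves. By Theorem \ref{thm:thurston1}, the horocyclic foliation parametrization gives a well-defined stretch line
\[
t\mapsto X^{(1)}_t = \mathbf{\Phi_{\mu^{(1)}}}^{-1}\bigl(e^t F_{\mu^{(1)}}(X)\bigr),
\]
and along it the hyperbolic length of any measured sublamination of $\mu(X,Y)$ is multiplied by exactly $e^t$. In particular $d_L(X,X^{(1)}_t)=t$ for all $t\geq 0$ by the length-ratio formula for $d_L$. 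Let $t_1$ be the supremum of those $t\geq 0$ for which $d_L(X, X^{(1)}_t)+d_L(X^{(1)}_t, Y)=d_L(X,Y)$; continuity of $d_L$ ensures the supremum is attained. If $t_1=d_L(X,Y)$ the construction is finished. Otherwise set $X_1=X^{(1)}_{t_1}$, pick a fresh complete lamination $\mu^{(2)}\supset\mu(X_1,Y)$, and iterate.

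The key persistence lemma, which guarantees that every lamination $\mu^{(j)}$ in the concatenation contains $\mu(X,Y)$, is that $\mu(X,Y)\subseteq\mu(X_i,Y)$ for every intermediate $X_i$. Indeed, since each preceding piece stretches along a complete lamination containing $\mu(X,Y)$, the length of any measured sublamination of $\mu(X,Y)$ is multiplied by exactly $e^{d_L(X_{j-1},X_j)}$ on the $j$-th piece. Telescoping and using the saturation of the triangle inequality at each cutting point, the length of $\mu(X,Y)$ is multiplied by exactly $e^{d_L(X_i,Y)}$ between $X_i$ and $Y$. Hence $\mu(X,Y)$ is ratio-maximizing for the pair $(X_i,Y)$ and therefore, by maximality of $\mu(X_i,Y)$ among chain-recurrent ratio-maximizing laminations, contained in $\mu(X_i,Y)$.

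The main obstacle is proving that the iteration terminates in finitely many steps. At the transition from $X_{i-1}$ to $X_i$ a new leaf outside $\mu^{(i)}$ must attain the extremal ratio at $X_i$, so the chain-recurrent part of $\mu(X_i,Y)$ strictly enlarges that of $\mu(X_{i-1},Y)$; but the chain-recurrent sublaminations of a complete geodesic lamination on $S$ form a poset whose cardinality is bounded in terms of the topology of $S$, because the non-foliated triangular regions of the horocyclic foliation admit only finitely many combinatorial types. The delicate part will be turning this combinatorial intuition into a quantitative strictly-decreasing complexity across transitions, and controlling how the horocyclic foliation $F_{\mu^{(i)}}(X_i)$ reorganizes into $F_{\mu^{(i+1)}}(X_i)$ across a switch of complete lamination. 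Once termination is secured, the resulting concatenation is globally a $d_L$-geodesic by construction, since every cutting point saturates the triangle inequality and each piece is a stretch path along a complete lamination containing $\mu(X,Y)$, as required.
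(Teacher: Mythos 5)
The paper states this result without proof, citing Thurston, so your attempt has to be judged against what Thurston's argument actually requires. Your skeleton --- complete $\mu(X,Y)$, stretch, cut at the last point where the triangle equality with $Y$ is saturated, re-complete the new maximal ratio-maximizing chain-recurrent lamination, iterate --- is the right strategy and is essentially Thurston's. But as written it has two genuine gaps, one of which is fatal to the construction.

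The fatal one: you define $t_1$ as the supremum of the $t\geq 0$ with $d_L(X,X^{(1)}_t)+d_L(X^{(1)}_t,Y)=d_L(X,Y)$ and observe that it is attained by continuity, but you never show $t_1>0$. That set always contains $t=0$, so attainment of the supremum is vacuous; if $t_1=0$ your iteration returns $X_1=X$ and never advances. The assertion that a small initial stretch along a completion of $\mu(X,Y)$ remains on a geodesic from $X$ to $Y$ --- equivalently that $L(X^{(1)}_t,Y)=e^{-t}L(X,Y)$ for small $t>0$ --- is precisely the analytic core of Thurston's theorem, proved by a delicate local analysis of Lipschitz maps near the ratio-maximizing lamination, and it cannot be taken for granted. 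The second gap is one you half-acknowledge: the termination argument. You claim that at each cut a new leaf outside $\mu^{(i)}$ must become ratio-maximizing (this is again the contrapositive of the missing positivity lemma, now at $X_i$) and that strictly increasing chains of chain-recurrent laminations have bounded length, but neither is established; the latter needs an explicit complexity bound on chain-recurrent geodesic laminations of $S$. Finally, a smaller point: your persistence lemma computes with ``the length of $\mu(X,Y)$,'' but $\mu(X,Y)$ is a chain-recurrent lamination that need not carry a transverse measure on all its leaves; the length-ratio telescoping applies only to its stump, and you must separately argue that all of $\mu(X,Y)$ --- in the Lipschitz-on-a-neighborhood sense of ratio-maximizing used in the paper --- stays ratio-maximizing from each $X_i$ to $Y$.
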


Since each element $\varphi$ of $\mathrm{Mod}(S)$ acts isometrically on $(\mathcal{T} (S), d_L)$, it maps $d_L$-geodesics to $d_L$-geodesics. We have the following:

\begin{lemma} Any $\varphi\in \mathrm{Mod}(S)$ maps a stretch line $ \mathbf{\Phi_\mu}^{-1}(e^t F_\mu(X))$ through $X$ to a stretch line through $\varphi(X)$, given by 
$$ \mathbf{\Phi_{\varphi(\mu)}}^{-1}(e^t F_{\varphi(\mu)}(\varphi(X))).$$
\end{lemma}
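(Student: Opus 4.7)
The plan is a naturality argument: the entire data defining a stretch line---the hyperbolic structure $X$, the complete lamination $\mu$, the ideal triangles of $X\setminus\mu$, the horocyclic leaves and the arc-length measure on $\mu$---depend only on the pair $(X,\mu)$, and so are transported in the obvious way by any self-homeomorphism of $S$. Hence the action of $\varphi$ should commute with the whole construction.

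First I would establish the equivariance identity
\[
F_{\varphi(\mu)}(\varphi(X)) = \varphi\bigl(F_\mu(X)\bigr)
\]
inside $\mathcal{MF}(\varphi(\mu))$. Pick a diffeomorphism $f$ representing $\varphi$. Then $f$ transports the hyperbolic structure $X$ to a hyperbolic structure $f_*X$ on $S$ representing the point $\varphi(X)\in\mathcal{T}(S)$, and it transports the $X$-geodesic lamination $\mu$ to an $f_*X$-geodesic lamination representing $\varphi(\mu)$. Viewed as an isometry from $X$ onto $f_*X$, the map $f$ carries ideal triangles to ideal triangles, horocycles perpendicular to $\mu$ to horocycles perpendicular to $f(\mu)$, and arc length on $\mu$ to arc length on $f(\mu)$. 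The subsequent collapse of each non-foliated region to a tripod also commutes with $f$. Since the three defining properties (i)--(iii) of the horocyclic foliation are preserved, the identity follows.

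Next, for each $t\in\mathbb{R}$ set $X_t = \mathbf{\Phi_\mu}^{-1}(e^t F_\mu(X))$, which by Theorem \ref{thm:thurston1} is the unique point satisfying $F_\mu(X_t) = e^t F_\mu(X)$. Applying the equivariance identity to the pair $(X_t,\mu)$ and using the linearity of the mapping class action on transverse measures gives
\[
F_{\varphi(\mu)}(\varphi(X_t)) = \varphi\bigl(F_\mu(X_t)\bigr) = e^t\,\varphi\bigl(F_\mu(X)\bigr) = e^t F_{\varphi(\mu)}(\varphi(X)).
\]
Since $\mathbf{\Phi_{\varphi(\mu)}}$ is a bijection (Theorem \ref{thm:thurston1} again), this rearranges to
\[
\varphi(X_t) = \mathbf{\Phi_{\varphi(\mu)}}^{-1}\bigl(e^t F_{\varphi(\mu)}(\varphi(X))\bigr),
\]
which is the desired formula.

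The only substantive step is the equivariance identity for the horocyclic foliation; everything else is a formal manipulation using Theorem \ref{thm:thurston1}. I do not foresee any serious obstacle, but care is needed to ensure that a single representative $f$ of $\varphi$ realizes compatibly the actions on the hyperbolic structure, on the geodesic lamination, on the ideal triangles of the complement, and on the collapsing operation, and that scaling a transverse measure by $e^t$ commutes with the induced action on $\mathcal{MF}$.
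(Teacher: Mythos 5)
Your proof is correct and follows essentially the same route as the paper's: both arguments rest on the equivariance of the horocyclic foliation construction under the mapping class group action (the identity $\varphi(F_\mu(X))=F_{\varphi(\mu)}(\varphi(X))$), combined with the bijectivity of $\mathbf{\Phi_{\varphi(\mu)}}$ from Theorem \ref{thm:thurston1}. Your write-up is somewhat more explicit about why each ingredient of the construction (ideal triangles, horocycles, arc length, collapsing) is transported by a representative of $\varphi$, which the paper leaves as ``easy to see,'' but the substance is the same.
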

\begin{proof}
It is easy to see that the image of $ \mathbf{\Phi_\mu}^{-1}(e^t F_\mu(X))$ under $\varphi$ is 
a stretch line which passes through $\varphi(X)$, stretched along the complete lamination $\varphi(\mu)$ and with corresponding  horocyclic measured foliation  $\varphi(F_\mu(X))$.
Since the horocyclic measured foliation is uniquely determined by $\varphi(X)$ and $\varphi(\mu)$, and since $ \mathbf{\Phi_{\varphi(\mu)}}^{-1}(e^t F_{\varphi(\mu)}(\varphi(X)))$ is also a stretch line through $\varphi(X)$ directed by   $\varphi(\mu)$, it follows that $\varphi(F_\mu(X))=F_{\varphi(\mu)}(\varphi(X))$.
\end{proof}

We denote by $[\lambda]$ the projective class of a measured lamination (or measured foliation) $\lambda$. Such a $[\lambda]$ determines a element in Thurston's boundary of Teichm\"uller space. We refer to Thurston \cite{Thurston}  and \cite{FLP}  for definitions and properties of Thurston's compactification and Thurston's boundary. Here we just recall that Thurston's compactification is homeomorphic to a closed $(6g-6+2n)$-dimensional ball whose boundary is identified with the space of projective measured foliations and that the mapping class group action on Teichm\"uller space extends continuously to Thurston's compactification.

The following two theorems are important for our study.

\begin{theorem}[Papadopoulos \cite{Papa}] \label{thm:papa}
The stretch line  $ \mathbf{\Phi_\mu}^{-1}(e^t F_\mu(X))$ converges in the positive direction to  $[F_\mu(X)]$ in Thurston's boundary.

\end{theorem}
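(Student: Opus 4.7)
My plan is to reduce the statement to the pointwise asymptotic, for every $\gamma\in\mathcal{S}$,
\[
\lim_{t\to+\infty} e^{-t}\,\ell_{X_t}(\gamma)\;=\;i\bigl(F_\mu(X),\gamma\bigr).
\]
By the defining embedding $X\mapsto[\ell_X(\cdot)]$ of Teichm\"uller space into the projective space of positive functions on $\mathcal{S}$, together with the identification of Thurston's boundary via $[\lambda]\mapsto[i(\lambda,\cdot)]$, this asymptotic is exactly the condition that $X_t$ converges to $[F_\mu(X)]$ as $t\to+\infty$.

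The choice of normalizing factor $e^{-t}$ is forced by the fundamental identity
\[
F_\mu(X_t)\;=\;e^t\,F_\mu(X),
\]
which is immediate from the definition $X_t=\mathbf{\Phi_\mu}^{-1}(e^t F_\mu(X))$ together with Theorem \ref{thm:thurston1}. Thus the projective class of the horocyclic foliation remains constantly equal to $[F_\mu(X)]$ along the stretch line, while its transverse measure is dilated by $e^t$; in particular $i(F_\mu(X_t),\gamma)=e^t\,i(F_\mu(X),\gamma)$ for every $\gamma\in\mathcal{S}$.

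The main technical ingredient is a two-sided estimate valid on every hyperbolic surface $Y$ carrying the (fixed) complete lamination $\mu$,
\[
i\bigl(F_\mu(Y),\gamma\bigr)\;\le\;\ell_Y(\gamma)\;\le\;i\bigl(F_\mu(Y),\gamma\bigr)+K(\gamma),
\]
with an additive correction $K(\gamma)$ that depends only on the topological crossing pattern of $\gamma$ with $\mu$. The lower inequality expresses that, in each ideal triangle of $Y\setminus\mu$, the horocyclic transverse measure of an arc is dominated by its hyperbolic length, since the transverse measure of $F_\mu(Y)$ coincides with arc length along the boundary leaves of $\mu$ and projection along horocycles onto such a leaf is distance non-increasing. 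The upper inequality is obtained by replacing $\gamma$ up to isotopy by a comparison curve made of horocyclic arcs together with bounded detours through the finitely many unfoliated tripod regions that $\gamma$ visits; this gives $K(\gamma)$ in terms of the number of triangles crossed. Applied to $Y=X_t$ and combined with the scaling $i(F_\mu(X_t),\gamma)=e^t\,i(F_\mu(X),\gamma)$, division by $e^t$ yields
\[
i(F_\mu(X),\gamma)\;\le\;e^{-t}\ell_{X_t}(\gamma)\;\le\;i(F_\mu(X),\gamma)+e^{-t}K(\gamma),
\]
so the limit as $t\to+\infty$ is exactly $i(F_\mu(X),\gamma)$, as required.

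The main obstacle is to verify that $K(\gamma)$ can genuinely be chosen independently of $t$. This amounts to showing that the combinatorial type of how the geodesic representative of $\gamma$ in $X_t$ crosses $\mu$ --- the cyclic sequence of complementary ideal triangles it visits and, in each, the pair of boundary leaves of $\mu$ by which it enters and exits --- is determined by the isotopy class of $\gamma$ and by $\mu$ alone, and in particular is independent of $t$. This uses the fact that $\mu$ itself is fixed along the whole stretch line (only the horocyclic foliation is dilated) and that the stretch deformation does not alter the topological intersection combinatorics of $\gamma$ with $\mu$. Once this uniform control is established, the proof is complete.
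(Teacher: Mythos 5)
The paper does not prove this statement; it is quoted from \cite{Papa} without proof, so I can only measure your argument against the standard one from that reference. Your overall strategy is the right one and matches it: reduce convergence in Thurston's boundary to the pointwise asymptotics $e^{-t}\ell_{X_t}(\gamma)\to i(F_\mu(X),\gamma)$ for $\gamma\in\mathcal{S}$, use the scaling $F_\mu(X_t)=e^tF_\mu(X)$ coming from Theorem \ref{thm:thurston1}, and sandwich $\ell_{X_t}(\gamma)$ between $i(F_\mu(X_t),\gamma)$ and $i(F_\mu(X_t),\gamma)$ plus an error that is $o(e^t)$. The reduction to length functions and the lower bound (transverse measure of an arc is dominated by its hyperbolic length, via the length-non-increasing projection onto $\mu$ along horocycles) are correct.

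The gap is in the upper bound, exactly where you place the entire weight of the proof. You define $K(\gamma)$ ``in terms of the number of triangles crossed'' and speak of ``the finitely many unfoliated tripod regions that $\gamma$ visits.'' But for a complete geodesic lamination $\mu$ with nonempty stump --- the only case relevant to this paper, e.g.\ in Section \ref{sec:pa} the stump is the minimal filling lamination $\lambda_1$ --- the geodesic representative of $\gamma$ meets $\mu$ in a Cantor set and therefore crosses the complementary ideal triangles \emph{infinitely} many times; a bounded detour per crossing sums to $+\infty$, so your $K(\gamma)$ is not even defined. (Your count is finite only in special cases such as an ideal triangulation of a punctured surface.) The repair is to use the geometry of the spikes: a crossing at depth $d$ into a spike requires a horocyclic detour of length at most $e^{-d}$ (the boundary leaf of the non-foliated region has length $1$), the depths of the infinitely many crossings tend to infinity so the series converges, and stretching only increases depths (arc length along $\mu$ is multiplied by $e^t$), so the total correction is non-increasing in $t$. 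Equivalently, one takes a representative of $\gamma$ that is a finite concatenation of arcs in leaves of $\mu$ --- whose total length in $X_t$ equals their $F_\mu(X_t)$-transverse measure, namely $e^t\,i(F_\mu(X),\gamma)$ for an efficient representative, since the transverse measure of $F_\mu$ agrees with arc length on $\mu$ --- and arcs in leaves of the horocyclic foliation, whose lengths remain bounded as $t\to+\infty$. By contrast, the difficulty you flag, the $t$-independence of the crossing combinatorics, is not where the problem lies: $\mu$ is realized geodesically in every $X_t$ and the homotopy class of $\gamma$ is fixed, so that part is automatic.
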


Recall that a measured geodesic lamination is said to be \emph{uniquely ergodic} if its transverse measure is unique up to scalar multiples.

\begin{theorem}[Th\'eret \cite{Theret}] \label{thm:gt}
Suppose that the stump $\lambda$ of the geodesic lamination $\mu$ is non-empty and is uniquely ergodic. Then $ \mathbf{\Phi_\mu}^{-1}(e^t F_\mu(X))$ converges in the negative direction to  $[\lambda]$ in Thurston's boundary.

\end{theorem}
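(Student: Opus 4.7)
The plan is to work in Thurston's compactification of Teichm\"uller space, to extract subsequential limits of $X_t$ along $t_n\to -\infty$, and to show that every such limit projective class must equal $[\lambda]$ by exploiting the unique ergodicity hypothesis. Recall that convergence $X_{t_n}\to [\nu]$ to a boundary point amounts to the existence of positive scalars $c_n$ with $c_n\ell_{X_{t_n}}(\gamma)\to i(\nu,\gamma)$ for every $\gamma\in \mathcal{S}$. By compactness of Thurston's compactification, it suffices to show that every such subsequential limit is $[\lambda]$.

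The central input is the stretch-line length formula already highlighted in the excerpt: for any measured lamination $\sigma$ supported on $\mu$, $\ell_{X_t}(\sigma)=e^{t}\ell_X(\sigma)$. Applied to the stump this gives $\ell_{X_t}(\lambda)=e^{t}\ell_X(\lambda)\to 0$ as $t\to -\infty$. A thick-thin argument (combined with unique ergodicity of $\lambda$, which prevents any other part of the surface from absorbing the degeneration) shows that the systole $\underline{\ell}(X_t)\to 0$, so $X_{t_n}$ leaves every thick part and its limit $[\nu]$ lies on Thurston's boundary. Fix an $\alpha\in \mathcal{S}$ with $i(\alpha,\lambda)>0$ and normalize $c_n:=1/\ell_{X_{t_n}}(\alpha)$, so that $i(\nu,\alpha)=1$. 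A collar-type lower bound forces $\ell_{X_t}(\alpha)\to\infty$ fast enough that $c_n\cdot e^{t_n}\to 0$, and consequently
\[ i(\nu,\lambda)=\lim_{n\to\infty}c_n\ell_{X_{t_n}}(\lambda)=\lim_{n\to\infty}c_n e^{t_n}\ell_X(\lambda)=0. \]

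The main obstacle is to upgrade $i(\nu,\lambda)=0$ to the statement that the support of $\nu$ is contained in the support of $\lambda$. A priori, $\nu$ could be a measured lamination supported in a complementary region of $\mathrm{supp}(\lambda)$. To exclude this, one uses the horocyclic-foliation control on the ideal triangles complementary to $\mu$: since the transverse measure of $F_\mu(X_t)=e^{t}F_\mu(X)$ collapses to zero, Thurston's inequality $\ell_{X_t}(\gamma)\geq i(\gamma,F_\mu(X_t))$, combined with complementary upper bounds on curves disjoint from $\mathrm{supp}(\lambda)$, forces $c_n\ell_{X_{t_n}}(\gamma)\to 0$ for each such $\gamma$. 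Hence the support of $\nu$ lies in $\mu$; the maximality defining the stump confines it to $\mathrm{supp}(\lambda)$; and unique ergodicity of $\lambda$ then gives $\nu=c\lambda$ for some $c>0$, whence $[\nu]=[\lambda]$. This is the crux of the argument: without unique ergodicity, different subsequential limits could in principle be distinct projective classes of transverse measures on $\mathrm{supp}(\lambda)$, so the conclusion could fail.
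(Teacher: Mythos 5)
First, a framing point: the paper itself offers no proof of this statement --- it is imported verbatim from Th\'eret \cite{Theret} as a black box --- so there is no internal argument to compare yours against, and your attempt has to be judged as a reconstruction of Th\'eret's theorem itself. Your architecture is the right one: pass to subsequential limits $[\nu]$ in Thurston's compactification, show $i(\nu,\lambda)=0$, and use unique ergodicity to force $[\nu]=[\lambda]$; and you correctly locate where unique ergodicity is indispensable. Two of your steps are essentially sound once cleaned up. A subsequential limit cannot be an interior point $Y$, since continuity of the length pairing would give $\ell_Y(\lambda)=\lim e^{t_n}\ell_X(\lambda)=0$, which is impossible for a nonzero measured lamination; no thick--thin argument is needed here. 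And $i(\nu,\lambda)=0$ does follow: the normalizing constants $c_n$ stay bounded (if $\ell_{X_{t_n}}(\alpha)\to 0$ for the normalizing curve $\alpha$, the collar lemma would force $c_n\ell_{X_{t_n}}(\beta)\to\infty$ for any $\beta$ crossing $\alpha$, contradicting finiteness of $i(\nu,\beta)$), so $c_n\ell_{X_{t_n}}(\lambda)\le C\,e^{t_n}\ell_X(\lambda)\to 0$.

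The genuine gap is exactly at the point you yourself flag as ``the main obstacle,'' and your proposed resolution does not close it. Since the stump need not be filling (it can be as small as one simple closed curve), $i(\nu,\lambda)=0$ leaves open that $\nu$ has mass in the complement of $\mathrm{supp}(\lambda)$, and what is needed is an \emph{upper} bound: for every simple closed curve $\gamma$ with $i(\gamma,\lambda)=0$, one must show $\ell_{X_{t_n}}(\gamma)/\ell_{X_{t_n}}(\alpha)\to 0$. But the only quantitative tool you invoke, Thurston's inequality $\ell_{X_t}(\gamma)\ge i(\gamma,F_\mu(X_t))=e^{t}i(\gamma,F_\mu(X))$, is a lower bound that tends to zero and says nothing about how large $\ell_{X_t}(\gamma)$ can be; the phrase ``combined with complementary upper bounds on curves disjoint from $\mathrm{supp}(\lambda)$'' names precisely the estimate that has to be proved, and that estimate --- controlling the hyperbolic geometry of the complementary components of the stump as $t\to-\infty$ --- is the actual content of Th\'eret's paper. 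Until it is supplied, nothing rules out a subsequential limit of the form $[\lambda']$ or $[a\lambda+b\lambda']$ with $\lambda'$ disjoint from $\lambda$, so different subsequences could even have different limits. (A smaller issue in the same step: even granting $i(\nu,\gamma)=0$ for all such $\gamma$, the passage to $\mathrm{supp}(\nu)\subseteq\mu$ and then, via maximality of the stump, to $\mathrm{supp}(\nu)\subseteq\mathrm{supp}(\lambda)$ requires handling closed-curve components of $\nu$ parallel to boundary leaves of the complementary regions.) In short: a correct roadmap, with the decisive estimate asserted rather than proved.
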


Now consider a pair of totally transverse measured geodesic laminations $\lambda_1$ and $\lambda_2$. Suppose that $\lambda_1$ is uniquely ergodic. We can choose a complete geodesic lamination   $\mu$ whose stump is  $\lambda_1$. By Theorem \ref{thm:thurston1}, there exists a unique $X\in \mathcal{T} (S)$ such that $F_{\mu}(X)$ is equivalent to $\lambda_2$. (As we noted above, we identify $F_{\mu}(X)$ with $\lambda_2$.) It follows from 
Theorem  \ref{thm:papa} and Theorem  \ref{thm:gt} that the stretch line $ \mathbf{\Phi_\mu}^{-1}(e^t F_\mu(X))$ converges in the positive direction to $[\lambda_2]$ and converges in the negative direction to $[\lambda_1]$. 

We have the following consequence.

\begin{corollary}\label{thm:converge}
Let $\lambda_1$ and $\lambda_2$ be a pair of totally transverse measured geodesic laminations and suppose that $\lambda_1$ is uniquely ergodic.  Then there exists a stretch line which converges in the positive direction to $[\lambda_2]$ and in the negative direction to $[\lambda_1]$. 
\end{corollary}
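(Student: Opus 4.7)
The plan is to realize the desired stretch line by assembling the three main tools recalled in this section: Thurston's parametrization (Theorem \ref{thm:thurston1}), the positive-direction convergence result (Theorem \ref{thm:papa}), and Th\'eret's negative-direction convergence result (Theorem \ref{thm:gt}). The idea is to choose a complete geodesic lamination $\mu$ which has $\lambda_1$ as its stump and for which $\lambda_2$ represents the horocyclic foliation of some point $X\in\mathcal{T}(S)$, and then to run the stretch line along $\mu$ through $X$.

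First, I would complete $\lambda_1$ to a complete geodesic lamination $\mu$ on $S$ whose stump is exactly $\lambda_1$. This is done by adding finitely many isolated leaves which spiral onto $\lambda_1$ and, when punctures are present, go out to cusps, subdividing each complementary region of $\lambda_1$ into ideal triangles. The added leaves are isolated and carry no transverse measure, so the maximal compactly supported sublamination of $\mu$ admitting a transverse measure is still $\lambda_1$. Next, I would verify that $\lambda_2$ belongs to $\mathcal{MF}(\mu)$: every leaf of $\lambda_2$ meets $\lambda_1\subset\mu$ transversely infinitely often by the totally transverse hypothesis; conversely, a non-cuspidal leaf of $\mu$ is either a leaf of $\lambda_1$ (hence meets $\lambda_2$ transversely infinitely often by hypothesis) or one of the added leaves, which must spiral onto $\lambda_1$ at both of its non-cuspidal ends and so also meets $\lambda_2$ infinitely often. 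With this verified, Theorem \ref{thm:thurston1} yields a unique $X\in\mathcal{T}(S)$ with $F_\mu(X)$ equivalent to $\lambda_2$.

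The conclusion then follows by examining the stretch line $t\mapsto X_t=\mathbf{\Phi_\mu}^{-1}(e^t F_\mu(X))$. By Theorem \ref{thm:papa}, $X_t$ converges to $[F_\mu(X)]=[\lambda_2]$ as $t\to+\infty$; since the stump of $\mu$ is $\lambda_1$, which is uniquely ergodic by hypothesis, Theorem \ref{thm:gt} gives $X_t\to[\lambda_1]$ as $t\to-\infty$. The main subtlety I anticipate is the verification in the preceding paragraph that the completion $\mu$ can be chosen so that $\lambda_2$ is totally transverse to $\mu$: if some added non-cuspidal leaf were disjoint from $\lambda_2$, the horocyclic construction would not realize $\lambda_2$ as $F_\mu(X)$. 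The standard completion procedure, in which added leaves accumulate onto the stump at each non-cuspidal end, is precisely what rules this out, so no further argument is needed.
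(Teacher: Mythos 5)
Your proposal is correct and follows essentially the same route as the paper: complete $\lambda_1$ to a complete geodesic lamination $\mu$ with stump $\lambda_1$, apply Theorem \ref{thm:thurston1} to obtain the point $X$ with $F_\mu(X)=\lambda_2$, and conclude by combining Theorem \ref{thm:papa} (positive direction) with Theorem \ref{thm:gt} (negative direction, using unique ergodicity of $\lambda_1$). The only difference is that you make explicit the verification that $\lambda_2$ is totally transverse to the completion $\mu$ and not merely to $\lambda_1$, a point the paper leaves implicit.
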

In general, the stretch line in Corollary \ref{thm:converge}  is not unique.    If $\lambda_1$ is not maximal, there are  different completions of $\lambda_1$ which give different stretch lines in Teichm\"uller space.

\section{Pseudo-Anosov maps} \label{sec:pa} 

In this section, we prove Theorem \ref{thm:pa} and \ref{thm:ab}. 

Recall that an element $\varphi\in \mathrm{Mod}(S)$ is said to be \emph{pseudo-Anosov} if it has a representative, still denoted by $\varphi$, which is a homeomorphism of $S$ that preserves the projective classes of a pair of transverse measured foliations $(F_1, \lambda_1)$ and $(F_2, \lambda_2)$ and if there exists a constant $K>1$ such that 
$$\varphi((F_1, \lambda_1))=(F_1, \frac{1}{K}\lambda_1),$$
$$\varphi((F_2, \lambda_2))=(F_2, K\lambda_2).$$
The measured foliations $\lambda_1$ and $\lambda_2$ are called the \emph{stable} and \emph{unstable}  foliations of $\varphi$ respectively. The quantity $\log K$ is the \emph{topological entropy} of $\varphi$ \cite{Thurston} \cite{FLP}.

We recall some more terminology:

A measured foliation $\lambda$ is  \emph{minimal} if no closed curve in $S$ can be realized as a leaf  of $\lambda$. Equivalently, after Whitehead moves, the foliation has only dense leaves on $S$. Two measured foliations are \emph{topologically equivalent}  if after isotopy and  Whitehead moves, the leaf structure of the two foliations is the same. A theorem of Masur \cite{Masur} states that if a measured foliation $\lambda$ is minimal and uniquely ergodic, then for any other measured foliation $\lambda'$  satisfying $i(\lambda, \lambda')=0$ we have $\lambda'=c \lambda$ for some constant $c > 0$.

\begin{proposition}[Thurston, see \cite{FLP}]\label{lemma:ergodic}
Suppose that $\varphi\in \mathrm{Mod}(S)$ is pseudo-Anosov. Then each of the invariant foliations of $\varphi$ is minimal and uniquely ergodic.
\end{proposition}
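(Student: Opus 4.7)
The approach is to prove minimality and unique ergodicity in sequence, obtaining the first from the interaction of $\varphi$ with the structural theory of measured foliations and the irreducibility of pseudo-Anosov classes, and the second from a Perron--Frobenius argument on a train track carrying the (now minimal) foliation.

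For minimality, I would argue by contradiction. Suppose $\lambda_1$ is not minimal. The standard decomposition of a measured foliation (after Whitehead moves) expresses $\lambda_1$ as a disjoint union of finitely many invariant components, each either annular (foliated by closed parallel leaves) or minimal non-annular. Since $\varphi$ fixes the underlying topological foliation (rescaling only the transverse measure by $1/K$), it permutes this finite collection of components, and so some power $\varphi^N$ preserves each component setwise. An annular component then yields a simple closed curve whose isotopy class is invariant under $\varphi^N$, while a proper minimal non-annular component yields an invariant multicurve along its boundary. In either case $\varphi^N$ is reducible. But $\varphi^N$ inherits the pseudo-Anosov property from $\varphi$, which by the Nielsen--Thurston classification recalled in the introduction is incompatible with being reducible; contradiction. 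Hence $\lambda_1$ is minimal, and the same argument applies to $\lambda_2$.

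For unique ergodicity, one uses Perron--Frobenius. Since $\lambda_1$ is now known to be minimal, choose a train track $\tau$ carrying $\lambda_1$ so that the transverse invariant measures on the underlying topological foliation $|\lambda_1|$ correspond bijectively to the nonnegative solutions of the switch equations on $\tau$; this is a finite-dimensional closed convex cone $V(\tau)$. Passing to a suitable iterate $\varphi^n$, we may assume $\varphi^n(\tau)$ is carried by $\tau$, so $\varphi^n$ induces a nonnegative integer linear map $M$ on $V(\tau)$, with $M\lambda_1=K^{-n}\lambda_1$. The pseudo-Anosov dynamics (density of every leaf, hence topological mixing of the branches of $\tau$ under $\varphi^n$) forces $M$ to be primitive, so by the Perron--Frobenius theorem $M$ has a unique (up to positive scalar) eigenvector in the interior of $V(\tau)$. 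This eigenvector must be $\lambda_1$, so every transverse invariant measure on $|\lambda_1|$ is a positive multiple of $\lambda_1$. The same reasoning, applied with $\varphi^{-1}$ in place of $\varphi$, handles $\lambda_2$.

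The delicate step is the verification of primitivity of $M$: one must rule out $M$-invariant proper faces of $V(\tau)$, equivalently, proper $\varphi^n$-invariant sublaminations of $\lambda_1$. This is where the full strength of the pseudo-Anosov hypothesis enters, as opposed to mere irreducibility, and it is the main obstacle in the argument. An alternative route, avoiding train tracks entirely, would combine the classical bound of order $3g-3+n$ on the number of ergodic transverse invariant measures carried by a minimal foliation with a fixed-point argument: $\varphi$ acts projectively on this finite-dimensional simplex of ergodic measures with $\lambda_1$ a fixed point attracting under $\varphi^{-1}$, and this attracting dynamics forces $\lambda_1$ to be the only ergodic measure.
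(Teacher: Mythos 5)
First, a point of comparison: the paper does not prove this proposition at all --- it is stated as a known result of Thurston with a citation to \cite{FLP}, so there is no in-paper argument to measure yours against. On its own terms, your minimality argument is correct and is the standard one: the finite component decomposition of the underlying foliation is permuted by $\varphi$, a power of $\varphi$ would therefore preserve an essential reducing multicurve coming from an annular component or from the boundary of a proper minimal piece, and this contradicts the irreducibility of pseudo-Anosov classes (and of their powers) recorded in the Nielsen--Thurston classification quoted in the introduction.

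The unique ergodicity half, however, has genuine gaps beyond the one you flag. (a) The setup is inconsistent as written: if $\varphi^n(\tau)$ is carried by $\tau$, the induced map $M$ is a nonnegative \emph{integer} incidence matrix, and a primitive such matrix cannot have a strictly positive eigenvector with eigenvalue $K^{-n}<1$; you must run the argument with $\varphi^{-n}$ (or with the unstable foliation), so that the Perron eigenvalue is $K^{n}$. (b) The claimed equivalence for primitivity is false: a proper invariant face of $V(\tau)$ corresponds to a proper subtrack of $\tau$, and such a subtrack can carry measured laminations that are not sublaminations of $\lambda_1$ at all, so the minimality already established does not by itself rule out invariant faces; the carrying statement $\varphi^{-n}(\tau)\prec\tau$ and the primitivity of $M$ both require the ``zooming'' argument along leaves of $\lambda_1$, which you do not supply. (c) Most importantly, even granting primitivity, the conclusion does not follow as stated: an arbitrary transverse invariant measure $\nu$ on the underlying foliation of $\lambda_1$ is \emph{not} an eigenvector of $M$, since nothing forces $\varphi^{-n}_*\nu$ to be proportional to $\nu$, so uniqueness of the Perron eigendirection says nothing about $\nu$ directly. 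The correct repair is essentially your closing ``alternative route'': the ergodic transverse measures are finite in number and are permuted by $\varphi$, hence each is a nonnegative eigenvector of a power of $M$, and a primitive nonnegative matrix has no nonnegative eigenvector outside the Perron ray. But as you phrase that route, the ``attracting dynamics on the simplex of ergodic measures'' is itself normally deduced from unique ergodicity (via North--South dynamics on projective measured foliation space), so invoking it here is circular; you must instead feed the ergodic decomposition back into the Perron--Frobenius step. Until (b) and (c) are carried out, the unique ergodicity statement is not proved.
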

In what follows, we shall often pass from the measured foliations $\lambda_1$ and $\lambda_2$ to the measured laminations that represent them and vice versa.

We endow $S$ with a hyperbolic structure and we identify $\lambda_1$ and $\lambda_2$ with the corresponding measured geodesic laminations. 
Since $\lambda_1$ is minimal and uniquely ergodic,  each connected component of $S\setminus \lambda_1$ is isometric to an ideal polygon or an ideal polygon with one puncture.
It follows that there are finitely many different complete geodesic laminations with stump $\lambda_1$.

\begin{lemma}
Let $\mu$ be a complete geodesic lamination with stump $\lambda_1$. Then $\mu$ and $\lambda_2$ are totally transverse.
\end{lemma}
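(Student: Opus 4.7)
The plan is to verify the two defining conditions of total transversality. The ingredients are the unique ergodicity of $\lambda_1$ and $\lambda_2$ (Proposition \ref{lemma:ergodic}), the positivity $i(\lambda_1, \lambda_2) > 0$, and the fact that $\lambda_1$ and $\lambda_2$ share no common leaf (all from the pseudo-Anosov hypothesis), together with the structural description of $\mu$: since $\lambda_1$ is the stump of $\mu$, the components of $S \setminus \lambda_1$ are ideal polygons, possibly with a puncture, and $\mu \setminus \lambda_1$ is a finite collection of isolated leaves triangulating these polygons.

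First, I would check that every leaf of $\lambda_2$ crosses $\mu$ transversely infinitely often. Since $\lambda_1 \subset \mu$, it suffices to prove this for $\lambda_1$. Pick a point $p \in \lambda_1 \cap \lambda_2$ (non-empty because $i(\lambda_1, \lambda_2) > 0$) and a local product box $B$ around $p$ in which $\lambda_1$ and $\lambda_2$ appear as a transverse pair of horizontal and vertical Cantor families of arcs (Cantor because neither lamination has isolated leaves, being minimal and not a single closed geodesic). Each horizontal arc of $\lambda_2 \cap B$ crosses $\lambda_1$ at infinitely many points. By minimality of $\lambda_2$, every leaf of $\lambda_2$ visits $B$ infinitely often, and each visit contributes infinitely many transverse intersections with $\lambda_1$.

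For the converse, let $\ell$ be a leaf of $\mu$ that does not go to a cusp. If $\ell \subset \lambda_1$, the analogous argument with the roles of $\lambda_1$ and $\lambda_2$ exchanged shows that $\ell$ meets $\lambda_2$ transversely infinitely often. Assume instead that $\ell$ is one of the isolated leaves of $\mu \setminus \lambda_1$. Then $\ell$ is a diagonal of some complementary ideal polygon of $\lambda_1$, and each of its two endpoints is an ideal vertex of this polygon, corresponding either to a cusp of $S$ or to a pair of boundary leaves of $\lambda_1$ onto which $\ell$ spirals. Since $\ell$ avoids the cusps, both ends spiral onto $\lambda_1$; by minimality of $\lambda_1$, the accumulation set of each half-leaf of $\ell$ is exactly $\lambda_1$.

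Suppose now, for contradiction, that $\ell$ meets $\lambda_2$ only finitely often. Then some half-leaf $\ell^+$ of $\ell$ is disjoint from $\lambda_2$, and, being connected, is contained in a single complementary region of $\lambda_2$. Since $\lambda_2$ is minimal and filling, this region is an ideal polygon, possibly with one puncture. A complete geodesic ray in such a region either converges to a cusp or spirals onto $\lambda_2$; the first is excluded by hypothesis, so $\ell^+$ spirals onto $\lambda_2$ and, by minimality, its accumulation set is the whole of $\lambda_2$. But the preceding paragraph identifies this accumulation set with $\lambda_1$, and $\lambda_1 \neq \lambda_2$, a contradiction. The main obstacle is this isolated-leaf sub-case, where the argument relies on consistently reading off the accumulation set of the same ray from two different ``spiraling onto'' pictures; the rest is essentially bookkeeping around unique ergodicity and the Cantor-type transverse structure of minimal laminations.
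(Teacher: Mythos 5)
Your argument is correct and rests on the same underlying fact as the paper's proof, namely the minimality (hence density of half-leaves) of the two invariant laminations of a pseudo-Anosov map; but where the paper passes to the associated measured foliations, normalizes by Whitehead moves, and then simply declares the two conditions of total transversality ``easy to see'', you carry out the verification directly on the geodesic laminations. The two halves of your argument --- the flow-box count showing that every leaf of $\lambda_2$ already meets $\lambda_1\subset\mu$ transversely infinitely often, and the accumulation-set dichotomy handling the finitely many isolated leaves of $\mu\setminus\lambda_1$ --- are precisely the details the paper omits, and the isolated-leaf case is the only place where anything beyond minimality is really at stake, so making it explicit is a genuine improvement in completeness. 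One small point to patch: your contradiction in that case starts from ``$\ell$ meets $\lambda_2$ only finitely often'', which tacitly assumes $\ell$ is not itself a leaf of $\lambda_2$ (and, symmetrically, the first half assumes no leaf of $\lambda_2$ is a leaf of $\mu$). Both degenerate cases are excluded at once: such a leaf would be dense in $\lambda_2$ by minimality, yet its closure lies in $\ell\cup\lambda_1$ (respectively forces $\lambda_2\subset\mu$ and hence $\lambda_2$ into the stump $\lambda_1$), contradicting $i(\lambda_1,\lambda_2)>0$. With that remark added, the proof is complete.
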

\begin{proof}
By Proposition \ref{lemma:ergodic}, both $\lambda_1$ and $\lambda_2$ are minimal and uniquely ergodic. We replace $\lambda_1$ and $\lambda_2$ by their respective measured foliations. Up to Whitehead moves, we may assume that for both $\lambda_1$ and $\lambda_2$, each half-leaf is dense in the surface. Then it is easy to see that
each  leaf of $\lambda_2$ intersects $\mu$ transversely infinitely often, and each leaf of $\mu$ which does not go to a cusp intersects $\lambda_2$ transversely infinitely often. It follows from the definition that $\lambda_2$ is totally transverse to $\mu$.
\end{proof}

\begin{theorem}\label{thm:pa-stretch}
Suppose that $\varphi\in \mathrm{Mod}(S)$ is pseudo-Anosov.  Then  there exists an integer $n\geq 1$ such that  $\varphi^n$ leaves a stretch line invariant. 
\end{theorem}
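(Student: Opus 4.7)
The plan is to combine the functoriality of the horocyclic foliation construction (the preceding Lemma) with the finiteness of the set of complete geodesic laminations sharing a given stump, and then to compute explicitly how $\varphi^n$ translates the stretch line.

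\emph{Step 1: Produce a complete lamination $\mu$ that is periodic under $\varphi$.} Since $\lambda_1$ is minimal and uniquely ergodic by Proposition \ref{lemma:ergodic}, the complementary components of its support in $S$ are ideal polygons (possibly with a puncture), so only finitely many complete geodesic laminations $\mu$ have the support of $\lambda_1$ as stump. Because $\varphi$ permutes $\lambda_1$ up to the scalar $1/K$, the support of $\lambda_1$ is preserved, and hence $\varphi$ acts on this finite set of completions of $\lambda_1$. Thus, for some $n \geq 1$ and some fixed completion $\mu$ we have $\varphi^n(\mu) = \mu$. By the preceding Lemma, $\mu$ is totally transverse to $\lambda_2$.

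\emph{Step 2: Choose the point $X$ on the stretch line.} By Theorem \ref{thm:thurston1} there is a unique $X \in \mathcal{T}(S)$ with $F_\mu(X) = \lambda_2$ (as a specific measured foliation, not merely a projective class). Let $X_t = \mathbf{\Phi_\mu}^{-1}(e^t F_\mu(X))$ be the stretch line through $X$ directed by $\mu$.

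\emph{Step 3: Show that $\varphi^n$ translates this stretch line.} By the Lemma relating $\varphi$ to horocyclic foliations, and using $\varphi^n(\mu) = \mu$, we get
\[
F_\mu(\varphi^n(X)) \;=\; F_{\varphi^n(\mu)}(\varphi^n(X)) \;=\; \varphi^n\bigl(F_\mu(X)\bigr) \;=\; \varphi^n(\lambda_2) \;=\; K^n\,\lambda_2 \;=\; e^{n\log K}\,F_\mu(X).
\]
Therefore $\varphi^n(X) = X_{n\log K}$. Applying the same computation at an arbitrary time,
\[
F_\mu(\varphi^n(X_t)) \;=\; \varphi^n\bigl(e^t F_\mu(X)\bigr) \;=\; e^t\cdot K^n\, F_\mu(X),
\]
so $\varphi^n(X_t) = X_{t+n\log K}$. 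This shows $\varphi^n$ maps the stretch line $\{X_t : t \in \mathbb{R}\}$ onto itself (acting as a translation by $n\log K$).

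The main obstacle is Step 1, namely showing that some power of $\varphi$ actually fixes a completion $\mu$ of (the support of) $\lambda_1$. This rests on the minimality and unique ergodicity of $\lambda_1$, which forces the number of completions to be finite so that the cyclic group generated by $\varphi$ acting on this finite set has a fixed point under a suitable power. Once Step 1 is in place, Steps 2 and 3 are a direct calculation using Theorem \ref{thm:thurston1} and the naturality of the horocyclic foliation under mapping classes.
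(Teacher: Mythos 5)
Your proof is correct, and it rests on the same essential finiteness observation as the paper's (there are only finitely many complete geodesic laminations with stump $\lambda_1$, so the cyclic group generated by $\varphi$ has a finite orbit there), but the mechanism by which you establish invariance of the stretch line is genuinely different. The paper fixes a completion $\mu$, forms the stretch line $r(\mu,\lambda_2)$, and then identifies the images $\varphi^k(r(\mu,\lambda_2))$ among each other by appealing to the asymptotic behavior at Thurston's boundary: Theorem \ref{thm:papa} for positive convergence to $[\lambda_2]$ and Th\'eret's Theorem \ref{thm:gt} (which requires unique ergodicity of the stump) for negative convergence to $[\lambda_1]$, together with the claim that a stretch line with these two endpoints is determined by $\lambda_2$ and the completion of $\lambda_1$. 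You instead apply the pigeonhole argument one level earlier, directly to the laminations, to get $\varphi^n(\mu)=\mu$, and then use the equivariance $F_{\varphi^n(\mu)}(\varphi^n(X))=\varphi^n(F_\mu(X))$ from the lemma together with the normalization $F_\mu(X)=\lambda_2$ and $\varphi^n(\lambda_2)=K^n\lambda_2$ to compute $\varphi^n(X_t)=X_{t+n\log K}$. This bypasses both boundary-convergence theorems entirely, and it yields as a by-product the exact translation length $n\log K$ along the stretch line, which the paper has to establish separately in Proposition \ref{pro:stretch}. The trade-off is that the paper's route also records the useful geometric fact that the invariant stretch line has prescribed endpoints $[\lambda_1]$ and $[\lambda_2]$ on Thurston's boundary, which your computation does not by itself provide.
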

\begin{proof}
Let $\lambda_1$ and $\lambda_2$ be the stable and unstable measured laminations of $\varphi$. 
 Choose a complete  geodesic lamination $\mu$ with stump $\lambda_1$. 
By the discussion before Corollary \ref{thm:converge}, there is a unique stretch line, denoted by $r(\mu, \lambda_2)$, which converges in the positive direction to  $[\lambda_2]$  and converges in the negative direction to  $[\lambda_1]$. 

Since $\varphi$ fixes $[\lambda_1]$ and $[\lambda_2]$  on Thurston's boundary, each $\varphi^k(r(\mu, \lambda_2))$ is a  stretch line which also converges in the positive direction to  $[\lambda_2]$  and converges in the negative direction to  $[\lambda_1]$.  

A stretch line satisfying the above conditions is uniquely determined by $\lambda_2$ and the completion of $\lambda_1$. 
Since there are finitely many different completions of $\lambda_1$, there is a integer $n$ such that $\varphi^n$ leaves $r(\mu, \lambda_2)$ invariant.
\end{proof}
By Theorem \ref{thm:hyperbolic}, $\varphi^n$ is hyperbolic. The translation distance $a(\varphi^n)$ attains its infimum at any point on $r(\mu, \lambda_2)$.

\begin{proposition}\label{pro:stretch}
With the above notations, 
$a(\varphi^n)=n\log K$.
\end{proposition}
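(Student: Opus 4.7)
The plan is to show that $\varphi^n$ acts on the invariant stretch line $r=r(\mu,\lambda_2)$ as a translation by exactly $n\log K$, and then to identify this translation length with $a(\varphi^n)$ via Theorem \ref{thm:hyperbolic}.

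First, I would parametrize $r$ as $X_t=\mathbf{\Phi_\mu}^{-1}(e^t F_\mu(X_0))$, where $F_\mu(X_0)$ is (equivalent to) $\lambda_2$. Along any stretch line the asymmetric distance is additive in the parameter, $d_L(X_s,X_t)=t-s$ for $s\le t$, so the restriction of $d_L$ to $r$ is a one-dimensional asymmetric metric. Since $\varphi^n$ is a $d_L$-isometry preserving $r$ as a set, it restricts to an isometry of $r$. By Theorems \ref{thm:papa} and \ref{thm:gt}, the two ideal endpoints of $r$ in Thurston's compactification are $[\lambda_1]$ (as $t\to-\infty$) and $[\lambda_2]$ (as $t\to+\infty$); both are fixed by $\varphi$, and $[\lambda_2]$ is the attracting fixed point of $\varphi$ on Thurston's boundary. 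Consequently $\varphi^n$ restricts to a positive translation $\varphi^n(X_t)=X_{t+T}$ for some $T>0$ independent of $t$. Theorem \ref{thm:hyperbolic} then gives
\[a(\varphi^n)=d_L(X_0,\varphi^n(X_0))=d_L(X_0,X_T)=T.\]

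Next, I would pin down $T$ by computing $\ell_{\varphi^n(X_0)}(\lambda_1)=\ell_{X_T}(\lambda_1)$ in two ways. Since $\lambda_1$ is the stump of $\mu$, hence supported by $\mu$, the stretch-line identity recalled in Section \ref{sec:thurston} gives
\[\ell_{X_T}(\lambda_1)=e^T\ell_{X_0}(\lambda_1).\]
On the other hand, from the pseudo-Anosov relation $\varphi(\lambda_1)=\tfrac{1}{K}\lambda_1$ and the equivariance of hyperbolic length under the mapping class group, one obtains $\ell_{\varphi(X)}(\lambda_1)=K\ell_X(\lambda_1)$, so iterating gives
\[\ell_{\varphi^n(X_0)}(\lambda_1)=K^n\,\ell_{X_0}(\lambda_1).\]
Equating the two expressions yields $e^T=K^n$, hence $T=n\log K$ and $a(\varphi^n)=n\log K$.

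The only delicate point in this plan is to fix the signs consistently: the direction of translation of $\varphi^n$ along $r$ must match the direction in which $\ell_{X_t}(\lambda_1)$ grows under iteration of $\varphi$. Both are controlled by the dynamical fact that $[\lambda_2]$ is the attracting (and $[\lambda_1]$ the repelling) fixed point of $\varphi$ on Thurston's boundary, together with the observation that $\ell_{X_t}(\lambda_1)\to 0$ as $t\to-\infty$. Once this is checked there is no further obstacle, and the translation length is identified with $n\log K$ by a single equation.
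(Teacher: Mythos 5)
Your proof is correct and rests on the same two facts as the paper's: the identity $\ell_{\varphi^n(X)}(\lambda_1)=\ell_X(\varphi^{-n}(\lambda_1))=K^n\ell_X(\lambda_1)$ and the exponential scaling of $\ell(\lambda_1)$ along the stretch line directed by $\mu$. The only cosmetic difference is that you route the identification of $a(\varphi^n)$ with the translation length through Theorem \ref{thm:hyperbolic}, whereas the paper proves the two inequalities $a(\varphi^n)\geq n\log K$ (valid at every $X\in\mathcal{T}(S)$) and $d_L(X,\varphi^n(X))=n\log K$ on the stretch line directly; this is essentially the same argument.
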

\begin{proof}
Since $\varphi^n(\lambda_1)=\frac{1}{K^n} \lambda_1$,  for any $X\in  \mathcal{T} (S)$, 
$$ \ell_{\varphi^n(X)}(\lambda_1)=\ell_X(\varphi^{-n}(\lambda_1))=K^n \ell_X(\lambda_1).$$
Thus $a(\varphi^n) \geq n \log K$.

Take $X$ on the stretch line $r(\mu, \lambda_2)$.  Since the measure of $\lambda_2$ is ``stretched" by $K^n$, by definition of stretch line, the length of the measured lamination  $\lambda_1$  is also ``stretched" by $K^n$. We have $n\log K=d_L(X, \varphi^n(X))\geq a(\varphi)$.

\end{proof}

The proof of Theorem \ref{thm:pa-stretch} gives the following:

\begin{theorem}\label{thm:pa-stretch2}
Suppose that $\varphi\in \mathrm{Mod}(S)$ is a pseudo-Anosov mapping class with stable and unstable measured laminations $\lambda_1$ and $\lambda_2$ and suppose that $\lambda_1$ is complete.  Then  $\varphi$ leaves a unique stretch line invariant. 
\end{theorem}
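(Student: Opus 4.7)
The plan is to follow the proof of Theorem \ref{thm:pa-stretch} and observe that the completeness hypothesis on $\lambda_1$ removes exactly the source of ambiguity that forced us to pass to a power $\varphi^n$ there. Concretely, in the proof of Theorem \ref{thm:pa-stretch} we chose a complete geodesic lamination $\mu$ with stump $\lambda_1$; the integer $n$ arose because there could be finitely many such completions $\mu$ which $\varphi$ permutes. When $\lambda_1$ is itself complete there is only one admissible choice, namely $\mu=\lambda_1$.

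First, I would produce the candidate stretch line. Since $\lambda_1$ is complete and uniquely ergodic (Proposition \ref{lemma:ergodic}), and $\lambda_2$ is totally transverse to $\lambda_1$ (by the lemma preceding Theorem \ref{thm:pa-stretch}), Theorem \ref{thm:thurston1} gives a unique $X\in\mathcal{T}(S)$ with $F_{\lambda_1}(X)=\lambda_2$, and Corollary \ref{thm:converge}, together with Theorems \ref{thm:papa} and \ref{thm:gt}, yields the stretch line $r(\lambda_1,\lambda_2)=\mathbf{\Phi_{\lambda_1}}^{-1}(e^tF_{\lambda_1}(X))$ which converges to $[\lambda_1]$ in the negative direction and to $[\lambda_2]$ in the positive direction.

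Next, I would verify $\varphi$-invariance. By the lemma preceding Theorem \ref{thm:pa-stretch}, the image of $r(\lambda_1,\lambda_2)$ under $\varphi$ is the stretch line directed by $\varphi(\lambda_1)$ through $\varphi(X)$. As unmeasured geodesic laminations, $\varphi(\lambda_1)=\lambda_1$, so the image is again a stretch line directed by the complete lamination $\lambda_1$. Its endpoints on Thurston's boundary are $\varphi([\lambda_1])=[\lambda_1]$ and $\varphi([\lambda_2])=[\lambda_2]$, since $\varphi$ scales the two transverse measures by $1/K$ and $K$ respectively. Any two stretch lines directed by the same complete lamination $\mu$ which share both endpoints on Thurston's boundary must coincide: the negative-direction endpoint is forced (the projective class of the stump, by Theorem \ref{thm:gt}), and the positive-direction endpoint determines the horocyclic measured foliation of the basepoint up to scale (by Theorem \ref{thm:papa}), which by Theorem \ref{thm:thurston1} determines the line. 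Hence $\varphi(r(\lambda_1,\lambda_2))=r(\lambda_1,\lambda_2)$.

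Finally, for uniqueness, let $r$ be any $\varphi$-invariant stretch line, directed by some complete lamination $\mu'$. Since $\varphi$ acts by translation along $r$ and since its unique fixed points on Thurston's boundary are $[\lambda_1]$ and $[\lambda_2]$, the two endpoints of $r$ must be $[\lambda_1]$ and $[\lambda_2]$ (the orientation being fixed by the fact that $\varphi$ shrinks $\lambda_1$ and stretches $\lambda_2$, matching Theorems \ref{thm:papa}--\ref{thm:gt}). By Theorem \ref{thm:gt} the stump of $\mu'$ is $\lambda_1$, and since $\lambda_1$ is already complete we get $\mu'=\lambda_1$; by Theorem \ref{thm:papa} the basepoint lies in $\mathbf{\Phi_{\lambda_1}}^{-1}(\mathbb{R}_{>0}\cdot\lambda_2)$, so $r=r(\lambda_1,\lambda_2)$. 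The only delicate point I anticipate is justifying that $\varphi$ has no other fixed points on Thurston's boundary than $[\lambda_1]$ and $[\lambda_2]$, but this is standard pseudo-Anosov dynamics and can be invoked from \cite{FLP} together with the unique ergodicity of Proposition \ref{lemma:ergodic}.
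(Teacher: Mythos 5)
Your existence and invariance arguments coincide with the paper's: the paper obtains this theorem by rereading the proof of Theorem \ref{thm:pa-stretch} in the case where $\lambda_1$ admits only the trivial completion $\mu=\lambda_1$, so that the finite orbit of stretch lines permuted by $\varphi$ reduces to a single line and no power $\varphi^n$ is needed. Your second and third paragraphs are a correct, slightly more detailed rendering of exactly that.

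The gap is in your uniqueness paragraph. For an arbitrary $\varphi$-invariant stretch line $r$ directed by some complete lamination $\mu'$, you assert that ``the two endpoints of $r$ must be $[\lambda_1]$ and $[\lambda_2]$'' and then invoke Theorem \ref{thm:gt} to conclude that the stump of $\mu'$ is $\lambda_1$. But Theorem \ref{thm:gt} is conditional: it gives negative-direction convergence to the projectivized stump only under the hypothesis that the stump of $\mu'$ is nonempty and uniquely ergodic, which is essentially the property of $\mu'$ you are trying to establish. Without it you do not know that $r$ converges in the negative direction at all, so its negative ``endpoint'' is undefined; and even granting convergence to $[\lambda_1]$, Theorem \ref{thm:gt} does not provide the converse implication identifying the stump with $\lambda_1$. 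The positive-direction half is fine (Theorem \ref{thm:papa} applies unconditionally and the fixed-point set of $\varphi$ on Thurston's boundary is $\{[\lambda_1],[\lambda_2]\}$), but pinning down $\mu'$ requires a different mechanism --- for instance showing that the stump of $\mu'$ realizes the maximal length ratio $K$ from $X$ to $\varphi(X)$ and is therefore contained in the ratio-maximizing chain-recurrent lamination $\mu(X,\varphi(X))$, which one must then identify with $\lambda_1$. To be fair, the paper itself does not supply this step either: its ``unique'' is justified only within the class of stretch lines directed by completions of $\lambda_1$ and converging positively to $[\lambda_2]$, so uniqueness among all $\varphi$-invariant stretch lines remains to be argued in both your write-up and the paper's.
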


In the general case, we don't know whether a pseudo-Anosov mapping class leaves a stretch line invariant. We will show that such a mapping class leaves a $d_L$-geodesic invariant. First notice the following property: 
\begin{proposition}\label{pro:stretch2}
Suppose that $\varphi\in \mathrm{Mod}(S)$ is pseudo-Anosov with topological entropy $\log K$. Then $\varphi$ is hyperbolic and $a(\varphi)=\log K$.
\end{proposition}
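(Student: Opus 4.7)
I would prove Proposition \ref{pro:stretch2} by treating the hyperbolicity of $\varphi$ and the value $a(\varphi)=\log K$ separately.

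For hyperbolicity, I would apply Theorem \ref{thm:irreducible}. A pseudo-Anosov $\varphi$ is irreducible: an admissible system reduced by $\varphi$ would give a multicurve disjoint from the minimal uniquely ergodic lamination $\lambda_1$ (Proposition \ref{lemma:ergodic}), which is impossible. And $\varphi$ has infinite order since $\varphi^k$ scales the transverse measure of $\lambda_1$ by $K^{-k}\neq 1$. Theorem \ref{thm:irreducible} then forces $\varphi$ to be elliptic or hyperbolic; elliptic is equivalent to periodic (by the Nielsen realization discussion in the introduction), which is excluded. So $\varphi$ is hyperbolic and $a(\varphi)$ is realized.

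For the lower bound, I would reuse the foliation computation of Proposition \ref{pro:stretch}: for every $X\in\mathcal{T}(S)$,
$$\ell_{\varphi(X)}(\lambda_1)=\ell_X(\varphi^{-1}(\lambda_1))=K\,\ell_X(\lambda_1),$$
so $L(X,\varphi(X))\geq K$ and hence $d_L(X,\varphi(X))\geq\log K$; taking the infimum gives $a(\varphi)\geq\log K$.

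For the upper bound $a(\varphi)\leq\log K$, I would fix $n\geq 1$ so that $\varphi^n$ preserves a stretch line $\ell=r(\mu,\lambda_2)$ (Theorem \ref{thm:pa-stretch}), pick $X\in\ell$, and use Proposition \ref{pro:stretch} to write $d_L(X,\varphi^n(X))=n\log K$. Because $\varphi$ acts isometrically, $d_L(\varphi^i(X),\varphi^{i+1}(X))=d_L(X,\varphi(X))$ for all $i$, and the triangle inequality gives $n\cdot d_L(X,\varphi(X))\geq n\log K$, i.e.\ $d_L(X,\varphi(X))\geq\log K$. The plan is to upgrade this to equality by exhibiting a $d_L$-geodesic from $X$ to $\varphi^n(X)$ that passes through the entire orbit $X,\varphi(X),\ldots,\varphi^n(X)$, forcing each consecutive piece to have length exactly $\log K$. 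By Theorem \ref{thm:thurston2}, every $d_L$-geodesic from $X$ to $\varphi^n(X)$ is a concatenation of stretch paths along complete laminations containing the ratio-maximizing chain-recurrent lamination $\mu(X,\varphi^n(X))$, which contains $\lambda_1$ because $\lambda_1$ itself realizes the extremal stretch $K^n$. The only complete laminations above $\lambda_1$ are the finite $\varphi$-orbit $\mu,\varphi(\mu),\ldots,\varphi^{n-1}(\mu)$, and using the equivariance $F_{\varphi(\mu)}(\varphi(X))=\varphi(F_\mu(X))$ from the Lemma in Section \ref{sec:thurston}, I would construct the desired geodesic as $n$ successive stretch paths of parameter $\log K$ along these completions; it hits $\varphi^i(X)$ at step $i$ and has total length $n\log K=d_L(X,\varphi^n(X))$, hence is a $d_L$-geodesic. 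This yields $a(\varphi)\leq d_L(X,\varphi(X))=\log K$, matching the lower bound.

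The main obstacle is making that last construction rigorous: checking that the parameter-$\log K$ stretch starting at $\varphi^i(X)$ along $\varphi^i(\mu)$ ends exactly at $\varphi^{i+1}(X)$, and that the resulting concatenation is globally a $d_L$-geodesic rather than just a path joining the orbit. The key inputs are the equivariance of horocyclic foliations together with the uniqueness of $\mu(X,\varphi^n(X))$: these constrain the admissible completions so tightly that the broken orbit cannot be strictly longer than the direct $d_L$-geodesic on $\ell$.
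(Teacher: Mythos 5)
Your lower bound $a(\varphi)\geq\log K$ via $\ell_{\varphi(X)}(\lambda_1)=K\ell_X(\lambda_1)$ is correct and matches the paper, and your route to realization through Theorem \ref{thm:irreducible} (pseudo-Anosov maps are irreducible and of infinite order, hence not elliptic, hence hyperbolic) is a legitimate alternative to the paper's argument. The problem is the upper bound $d_L(X,\varphi(X))\leq\log K$, which is the real content of the proposition, and there your construction has a genuine gap. The parameter-$\log K$ stretch along $\varphi^i(\mu)$ starting at $\varphi^i(X)$ does \emph{not} end at $\varphi^{i+1}(X)$: writing $X=\mathbf{\Phi}_\mu^{-1}(\lambda_2)$, equivariance gives $\varphi^{i+1}(X)=\mathbf{\Phi}_{\varphi^{i+1}(\mu)}^{-1}(K^{i+1}\lambda_2)$, whereas the endpoint of your stretch path is $\mathbf{\Phi}_{\varphi^{i}(\mu)}^{-1}(K^{i+1}\lambda_2)$; these are horocyclic coordinates with respect to \emph{different} completions of $\lambda_1$ and define different hyperbolic structures unless $\varphi(\mu)=\mu$ (i.e.\ unless $n=1$, the case already handled by Theorem \ref{thm:pa-stretch2}). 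Moreover the strategy is circular as stated: to certify that a concatenation of paths through the orbit $X,\varphi(X),\dots,\varphi^n(X)$ is a geodesic of total length $n\log K$, you must already know that each leg has length exactly $\log K$, which is precisely the claim $d_L(X,\varphi(X))=\log K$ you are trying to establish. Neither the uniqueness of the ratio-maximizing chain-recurrent lamination $\mu(X,\varphi^n(X))$ nor the finiteness of the set of completions of $\lambda_1$ (which, incidentally, need not coincide with the single $\varphi$-orbit $\mu,\varphi(\mu),\dots,\varphi^{n-1}(\mu)$) closes this loop.

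The paper's proof of the upper bound goes through an entirely different mechanism, namely Lemma \ref{lem:Papa}: for $X$ on the $\varphi^n$-invariant stretch line one has $i(\varphi^m(\lambda_2),\alpha)\leq\ell_{\varphi^m(X)}(\alpha)\leq i(\varphi^m(\lambda_2),\alpha)+C$ with $C$ independent of $\alpha$ and $m$. Since $\varphi^{-m}$ permutes the simple closed curves, $\sup_\alpha \ell_{\varphi(X)}(\alpha)/\ell_X(\alpha)=\sup_\alpha \ell_{\varphi^m(X)}(\varphi^{-1}\alpha)/\ell_{\varphi^m(X)}(\alpha)\leq K+C/\left(K^m\inf_\alpha i(\lambda_2,\alpha)\right)$, and letting $m\to\infty$ (using that $i(\lambda_2,\cdot)$ is bounded below on simple closed curves because $\lambda_2$ fills) gives $d_L(X,\varphi(X))\leq\log K$. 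If you want to repair your argument you would need either this asymptotic comparison or some other device; the geometric concatenation picture alone does not suffice.
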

\begin{proof}
By the above results, there is a power $\varphi^n$ which fixes a stretch line and with translation length $a(\varphi^n)=n\log K$. By the triangle inequality,
$$n\log K\leq d_L(X, \varphi^n(X))\leq n d_L(X, \varphi(X)).$$
It follows that $\log K \leq a(\varphi)$.

To show the reverse inequality, we use the following lemma.
\begin{lemma}[Papadopoulos \cite{Papa}]\label{lem:Papa}
There is a constant $C>0$ such that for any simple closed curve $\alpha$ and for any integer $m$,
$$i(\varphi^m(\lambda_2), \alpha) \leq \ell_{\varphi^m(X)}(\alpha)   \leq i(\varphi^m(\lambda_2), \alpha)+C.$$
\end{lemma}
By this lemma, we have
\begin{eqnarray*}
 \frac{\ell_{\varphi(X)}(\alpha) ) }{\ell_X(\alpha)}  &= & \frac{\ell_{\varphi^{m}(X)}(\varphi^{-1}(\alpha) ) }{\ell_{\varphi^m(X)}(\alpha)}   \\
&\leq&  \frac{i(\varphi^m(\lambda_2), \varphi^{-1}(\alpha))+C }{i(\varphi^m(\lambda_2), \alpha)} \\
&=& \frac{K^m i(\lambda_2, \varphi^{-1}(\alpha))+C }{K^mi(\lambda_2, \alpha)} \\
&=& \frac{K^{m+1} i(\lambda_2, \alpha)+C }{K^mi(\lambda_2, \alpha)}.
\end{eqnarray*}
By taking $m \to +\infty$, we have 
$$ \frac{\ell_{\varphi(X)}(\alpha) ) }{\ell_X(\alpha)} \leq K. $$
Since  $\alpha$ is arbitrary, we have 
$$d_L(X, \varphi(X)) \leq \log K.$$
 Moreover, the infimum of $a(\varphi)$  is attained at any point on the stretch line  invariant under $\varphi^n$.
\end{proof}

Now we show the following: 
\begin{theorem}\label{thm:pa-stretch4}
A pseudo-Anosov map $\varphi\in \mathrm{Mod}(S)$ leaves a $d_L$-geodesic invariant. 
\end{theorem}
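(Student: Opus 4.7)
The plan is to bootstrap from the partial result already in hand: by Theorem \ref{thm:pa-stretch}, some power $\varphi^n$ leaves a stretch line $r$ invariant, and by Propositions \ref{pro:stretch} and \ref{pro:stretch2}, $a(\varphi^n)=n\log K$ and $a(\varphi)=\log K$. The idea is to pick a point $X\in r$, produce a $d_L$-geodesic from $X$ to $\varphi(X)$ using Theorem \ref{thm:thurston2}, and then concatenate its $\varphi$-translates to manufacture a $\varphi$-invariant $d_L$-geodesic. The main obstacle is verifying that this naive concatenation really is distance-realising (not merely a continuous path), because $d_L$ is not uniquely geodesic and in general concatenations of geodesics are only sub-geodesics.

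First I would establish the rigidity along the orbit of $X$. For any $X\in r$, the stretch line $r$ realises $d_L(X,\varphi^n(X))=n\log K$. Using the isometric action of $\varphi$ and the triangle inequality,
\begin{equation*}
n\log K = d_L(X,\varphi^n(X)) \leq \sum_{i=0}^{n-1}d_L(\varphi^i(X),\varphi^{i+1}(X)) = n\, d_L(X,\varphi(X)),
\end{equation*}
while $d_L(X,\varphi(X))\geq a(\varphi)=\log K$ by Proposition \ref{pro:stretch2}. Equality throughout forces $d_L(X,\varphi(X))=\log K$. Then, for every $k\geq 0$, the triangle inequality gives $d_L(X,\varphi^k(X))\leq k\log K$, and
\begin{equation*}
kn\log K = d_L(X,\varphi^{kn}(X)) \leq n\, d_L(X,\varphi^k(X))
\end{equation*}
gives the reverse bound. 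Hence $d_L(X,\varphi^k(X))=k\log K$ for every $k\geq 0$; by the same reasoning this holds for every $k\in\mathbb{Z}$.

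Next I would construct the candidate geodesic. Using Theorem \ref{thm:thurston2}, choose a $d_L$-geodesic $\gamma_0\colon[0,\log K]\to\mathcal{T}(S)$ with $\gamma_0(0)=X$ and $\gamma_0(\log K)=\varphi(X)$, and extend it by
\begin{equation*}
\gamma(t+k\log K) := \varphi^k(\gamma_0(t)), \qquad t\in[0,\log K],\ k\in\mathbb{Z}.
\end{equation*}
The endpoint condition $\varphi(\gamma_0(0))=\varphi(X)=\gamma_0(\log K)$ makes $\gamma$ a continuous, $\varphi$-equivariant map $\mathbb{R}\to\mathcal{T}(S)$: by construction $\varphi(\gamma(t))=\gamma(t+\log K)$, so the image is $\varphi$-invariant.

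Finally I would check that $\gamma$ is a $d_L$-geodesic, which is the delicate point. Given $s\leq t$ write $s=s_0+j\log K$, $t=t_0+k\log K$ with $s_0,t_0\in[0,\log K]$ and $j\leq k$. The upper bound $d_L(\gamma(s),\gamma(t))\leq t-s$ follows by concatenating the geodesic segment from $\gamma(s)$ to $\gamma((j+1)\log K)$, the $\varphi$-translated segments up to $\gamma(k\log K)$, and the tail to $\gamma(t)$, each of whose $d_L$-length is directly computed using that $\gamma_0$ is a geodesic and $\varphi$ is an isometry. For the lower bound, pick integers $p\leq j$ and $q\geq k+1$ and use that the $\varphi$-orbit points $\gamma(p\log K)=\varphi^p(X)$ and $\gamma(q\log K)=\varphi^q(X)$ satisfy $d_L(\gamma(p\log K),\gamma(q\log K))=(q-p)\log K$ by the previous paragraph; two applications of the triangle inequality then squeeze
\begin{equation*}
(q-p)\log K\leq (s-p\log K)+d_L(\gamma(s),\gamma(t))+(q\log K - t),
\end{equation*}
which rearranges to $d_L(\gamma(s),\gamma(t))\geq t-s$. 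Hence $d_L(\gamma(s),\gamma(t))=t-s$ for all $s\leq t$, so $\gamma$ is a $d_L$-geodesic that is invariant under $\varphi$.
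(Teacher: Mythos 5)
Your proposal is correct and follows essentially the same route as the paper: take $X$ on the $\varphi^n$-invariant stretch line, concatenate the $\varphi$-translates of a geodesic segment from $X$ to $\varphi(X)$, and use that the orbit points $\varphi^{kn}(X)$ on the stretch line realize distance $kn\log K$ to force the concatenation to be distance-realising. The only difference is that you spell out the squeeze argument (and the intermediate identity $d_L(X,\varphi^k(X))=k\log K$) that the paper leaves implicit; just note that the equality $d_L(X,\varphi(X))=\log K$ comes from the upper bound $d_L(\cdot,\varphi(\cdot))\le\log K$ established in the proof of Proposition \ref{pro:stretch2}, not from "equality throughout" in your chain of lower bounds.
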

\begin{proof}
By Theorem \ref{thm:pa-stretch}, there exists an integer $n$ such that  $\varphi^n$ leaves a stretch line $r_1$ invariant. Note that $r_i=\varphi(r_{i-1}), i=1, \cdots, n-1$
are also invariant by $\varphi^n$.  

Choose any point $X$ on $r_1$ and choose a geodesic segment between $X$ and $\varphi(X)$. Denote this geodesic segment by $R_1$. We construct a path $R$ in Teichm\"uller space by setting 
$$R=R_1 \cup R_2 \cdots \cup R_n \cup \cdots$$
where $r_k=\varphi(R_{k-1})$. We  also extend the path to the reverse direction by iteration of $\varphi^{-1}$. In this way, we get an infinite path $R$ in Teichm\"uller space which is invariant by the action of $\varphi$. 

We claim that $R$ is a geodesic. To see this, consider the segment on $R$ connecting $\varphi^{-kn}(X)$ and $\varphi^{kn}(X)$. The length of this segment is equal to $2kn$ times $d_L(X, \varphi(X))$. On the other hand, both $\varphi^{-kn}(X)$ and $\varphi^{kn}(X)$ lie on the stretch line $r_1$, and the Lipschitz distance between $\varphi^{-kn}(X)$ and $\varphi^{kn}(X)$ is equal to $2kn$ times $\log K$ (the entropy of $\varphi$). We have shown in Proposition  \ref{pro:stretch2} that $d_L(X, \varphi(X))=\log K$. As a result, the length of the above segment on $R$ connecting $\varphi^{-kn}(X)$ and $\varphi^{kn}(X)$ is equal to the Lipschitz  distance between $\varphi^{-kn}(X)$ and $\varphi^{kn}(X)$. Since $k$ is arbitrary, $R$ is a $d_L$-geodesic.

\end{proof}

Combining Theorem \ref{thm:pa-stretch} and Theorem \ref{thm:pa-stretch4}, we obtain Theorem \ref{thm:pa}.

Note that if $\varphi$ is pseudo-Anosov with entropy $\log K$, it was proved by Bers \cite{Bers} that $b(\varphi)=\log K$. It follows that $a(\varphi)=b(\varphi)$. From this, we deduce Theorem \ref{thm:ab}. 
The projection of a $\varphi$-invariant $d_L$-geodesic on the moduli space is a closed $d_L$-geodesic, which is homotopic to a  closed Teichm\"uller geodesic with the same length.

Note that by results of Th\'eret in \cite{Thesis},  any two $d_L$-geodesics left invariant by a pseudo-Anosov map are asymptotic, and  two $d_L$-geodesics left invariant by two distinct pseudo-Anosov maps are divergent.

\section{Reducible maps}
By Bers' classification, we have 

\begin{proposition}\label{pro:reducible}
An element $\varphi\in \mathrm{Mod}(S)$ is of infinite order and reducible if and only if $b(\varphi) $ is not realized. Moreover, such an element $\varphi$ has no pseudo-Anosov reduced component
 if and only if $b(\varphi)=0$.
\end{proposition}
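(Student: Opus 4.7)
The plan is to derive both equivalences directly from Bers' classification theorem (the theorem of Bers stated in the Introduction) together with his analysis of the generalized extremal problem in \cite{Bers}.

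For the first equivalence, I would invoke the Nielsen--Thurston trichotomy: every $\varphi\in\mathrm{Mod}(S)$ is periodic, reducible, or pseudo-Anosov, and a pseudo-Anosov class is neither periodic nor reducible. Parts (1) and (3) of Bers' theorem say that $b(\varphi)$ is realized whenever $\varphi$ is periodic (with $b(\varphi)=0$) or pseudo-Anosov (with $b(\varphi)>0$). Consequently, if $b(\varphi)$ is not realized then $\varphi$ is neither periodic nor pseudo-Anosov, so it is reducible and of infinite order. The converse is exactly part (2) of Bers' theorem. Note also that periodic elements are realized by a finite-order homeomorphism, so "infinite order" is what distinguishes a genuinely Nielsen--Thurston reducible class from a periodic one.

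For the second equivalence, I would appeal to Bers' generalized solution, reviewed in the Introduction. Given a reducible $\varphi$ of infinite order with a $\varphi$-invariant admissible set $\mathcal{C}=\{C_1,\ldots,C_k\}$, Bers produces a noded Riemann surface $X_\infty$ obtained by pinching the curves in $\mathcal{C}$, and shows that on each nonsingular component $\Sigma_i$ of $X_\infty$ a suitable power $\varphi^m$ (large enough that each piece is fixed set-wise) acts as either a periodic or a pseudo-Anosov homeomorphism. Because translation lengths in the pieces aggregate as a maximum and because $\varphi^m$ is conjugate through the pinching to the induced action on the disjoint union of the $\Sigma_i$, one has
\[
b(\varphi)=\frac{1}{m}\max_i b\bigl(\varphi^m|_{\Sigma_i}\bigr).
\]
Since $b(\psi)=0$ for $\psi$ periodic and $b(\psi)=\log K>0$ for $\psi$ pseudo-Anosov of dilatation $K$, the right-hand side vanishes if and only if every component carries a periodic action, i.e.\ exactly when $\varphi$ has no pseudo-Anosov reduced component.

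The main obstacle is not conceptual but organizational: one must check that the translation distance on the noded surface really is the maximum over the translation distances of the pieces, which amounts to verifying that a minimizing sequence $(X_k)$ for $b(\varphi)$ in $\mathcal{T}(S)$ decouples in the limit into independent extremal problems on the individual $\Sigma_i$, with the shapes of the pinched collars contributing nothing to the Teichm\"uller translation distance. This decoupling, and the control on the contributions of the shrinking annuli, is precisely what Bers establishes in \cite{Bers}, so the proposition is a direct corollary of his work and no genuinely new argument is required.
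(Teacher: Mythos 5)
Your proposal is correct and follows essentially the same route as the paper, which states this proposition as an immediate consequence of Bers' classification and of his analysis of the generalized extremal problem on the noded surface (periodic or pseudo-Anosov restriction to each nonsingular component). The only caveat is that for the second equivalence you do not actually need the exact identity $b(\varphi)=\frac{1}{m}\max_i b(\varphi^m|_{\Sigma_i})$, only the zero/positive dichotomy it implies, which is what Bers' results directly give.
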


Suppose that $\varphi\in \mathrm{Mod}(S)$ is of infinite order, reducible and has no pseudo-Anosov reduced component.
Since $a(\varphi)\leq 2b(\varphi)$ and $b(\varphi)=0$,  $\varphi$ is parabolic, i. e., $a(\varphi)=0$.

On the other hand, suppose that $\varphi$ is of infinite order, reducible and has at least one pseudo-Anosov reduced component. Let $Y$ be a pseudo-Anosov reduced component of $\varphi$. Up to taking a power of $\varphi$, we may assume that $\varphi$ maps $Y$ to $Y$. There is a pair of transverse measured foliations  $\lambda_1$ and $\lambda_2$ on the surface $Y$ such that 
$$\varphi(\lambda_1)=\frac{1}{K}\lambda_1,  \varphi(\lambda_2)=K \lambda_2,$$
for some constant $K>1$.
 
It follows that for any $X\in \mathcal{T}(S)$, 
$$d_L(X,\varphi(X) )\geq \log \frac{\ell_X(\varphi^{-1}(\lambda_1))}{\ell_X(\lambda_1)}= \log K.$$
As a result, $a(\varphi)\geq \log K$.

We deduce the following:
\begin{theorem}\label{thm:parabolic}
$\varphi$ is parabolic if and only if  $\varphi$ is infinite order, reducible and has no pseudo-Anosov reduced component.
\end{theorem}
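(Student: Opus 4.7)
\medskip

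\textbf{Proof plan for Theorem \ref{thm:parabolic} (the final statement).} The argument splits naturally along the biconditional, and both directions rely almost entirely on tools already assembled: Bers' classification via $b(\varphi)$ (Proposition \ref{pro:reducible}), the comparison $d_L\leq 2d_T$, the characterization of elliptic mapping classes as the periodic ones, and the fact that a pseudo-Anosov mapping class produces a genuine positive lower bound for $a$ via length functions of its stable lamination (as recorded in Proposition \ref{pro:stretch2} and the calculation preceding the statement of the theorem).

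\medskip

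For the direction ``infinite order $+$ reducible $+$ no pseudo-Anosov reduced component $\Longrightarrow$ parabolic'', I would start with Proposition \ref{pro:reducible}: under these hypotheses $b(\varphi)=0$ and $b(\varphi)$ is not realized. Then the Wolpert-type inequality $d_L\leq 2d_T$ recalled in the introduction gives $a(\varphi)\leq 2b(\varphi)=0$, so $a(\varphi)=0$. To conclude that $a(\varphi)$ is not attained, I would invoke the fact (explained in the introduction, using Nielsen--Kerckhoff) that $\varphi$ is elliptic if and only if it has a fixed point in $\mathcal{T}(S)$ if and only if it is periodic. Since $\varphi$ has infinite order it cannot be periodic, hence cannot be elliptic, hence $a(\varphi)=0$ is not realized, and $\varphi$ is parabolic by definition.

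\medskip

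For the converse, assume $\varphi$ is parabolic, so $a(\varphi)=0$ and $a(\varphi)$ is not realized. First, $\varphi$ is not elliptic, and by the same fixed-point characterization $\varphi$ is not periodic, i.e.\ $\varphi$ has infinite order. Next, by Proposition \ref{pro:stretch2} a pseudo-Anosov mapping class with dilation $K>1$ satisfies $a(\varphi)=\log K>0$, so the parabolic hypothesis rules out the pseudo-Anosov case. The Nielsen--Thurston trichotomy then forces $\varphi$ to be reducible. Finally, I must rule out that any reduced component of $\varphi$ is pseudo-Anosov. For this I would take a power $\varphi^m$ which fixes (setwise, and with fixed orientation) a hypothetical pseudo-Anosov component $Y$ with stable lamination $\lambda_1$ and dilation $K>1$; then, exactly as in the displayed computation immediately preceding the theorem statement, for every $X\in\mathcal{T}(S)$
\[
d_L(X,\varphi^m(X))\ \geq\ \log\frac{\ell_X(\varphi^{-m}(\lambda_1))}{\ell_X(\lambda_1)}\ =\ \log K,
\]
so $a(\varphi^m)\geq \log K$. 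Combined with the subadditivity $a(\varphi^m)\leq m\,a(\varphi)$ (which is immediate from the triangle inequality applied to the orbit $X,\varphi(X),\ldots,\varphi^m(X)$), this yields $a(\varphi)\geq \log K / m >0$, contradicting parabolicity.

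\medskip

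I do not expect a serious obstacle: every ingredient has been set up earlier in the paper. The only place where one must be a little careful is the last step, where one needs to pass to the power $\varphi^m$ that stabilizes the reduced pseudo-Anosov component and its invariant foliations in order to apply the foliation-stretching identity, and then transfer the resulting lower bound from $a(\varphi^m)$ back to $a(\varphi)$ via the elementary inequality $a(\varphi^m)\leq m\,a(\varphi)$. Once that bookkeeping is done, the theorem is a clean assembly of the preceding results.
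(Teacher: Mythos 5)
Your proof is correct and follows essentially the same route as the paper: Proposition \ref{pro:reducible} plus $d_L\leq 2d_T$ for the forward direction, and the length-ratio lower bound $a(\varphi)\geq\log K$ coming from a pseudo-Anosov reduced component for the converse. Your handling of the power $\varphi^m$ via $a(\varphi^m)\leq m\,a(\varphi)$ is in fact slightly more careful than the paper's, which asserts $a(\varphi)\geq\log K$ after ``up to taking a power'' without making that transfer explicit.
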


Now suppose that $\varphi$ is pseudo-hyperbolic. 

By Theorem \ref{thm:parabolic}, $\varphi$ has at least one pseudo-Anosov reduced component. The maximum of the topological entropy on all of the pseudo-Anosov components gives a lower bound for $a(\varphi)$. In fact, from Bers' results, it follows that $a(\varphi)\geq b(\varphi)$.

By Theorem \ref{thm:irreducible}, $\varphi$ is reducible.
Assume that  $\varphi$ is reduced by a maximal admissible set $\{C_1, \cdots, C_k\} $. Up to a power of $\varphi$, we may assume that $\varphi$ fixes each reduced component. Note that any power of $\varphi$ is also pseudo-hyperbolic.

By definition, there exists a sequence $(X_k)_{k\geq 1}$ in  Teichm\"uller space such that 
$$d_L(X_k, \varphi(X_k))\to a(\varphi).$$
By Corollary \ref{coro:reducible2}, for any $\epsilon>0$, $X_k$  leaves the $\epsilon$-thick part of $\mathcal{T}(S)$ for $k$ sufficiently large. 
This means that  $\underline\ell( X_k)\to 0$.

 Let $Y$ be any pseudo-Anosov reduced component of $Y$. But assumption, $\varphi$ fixes $Y$. 

\begin{lemma}\label{lemm:p-h}
Let $(\alpha_k)_{k\geq 1}$ be a sequence of simple closed curves such that $\ell_{ X_k}(\alpha_k)\to 0$. For sufficiently large $k$, each $\alpha_k$ is not contained in the interior of $Y$.
\end{lemma}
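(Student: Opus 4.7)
The plan is to argue by contradiction. Suppose that, after passing to a subsequence, $\alpha_k \subset \mathrm{int}(Y)$ for every $k$. Since $d_L(X_k,\varphi(X_k))$ converges to $a(\varphi)<\infty$, the Lipschitz constants $L_k := L(X_k,\varphi(X_k))$ are uniformly bounded by some $M>1$. My goal is to exhibit, on $Y$, an essential simple closed curve whose isotopy class is preserved by some positive power of $\varphi|_Y$, contradicting the fact that $\varphi|_Y$ is pseudo-Anosov.

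The key preliminary observation is that any simple closed curve on $S$ whose $X_k$-geodesic lies in $\mathrm{int}(Y)$ must be essential and non-peripheral as a curve in $Y$: a peripheral curve in $Y$ would be isotopic to a component $C_i$ of the reduction system, and its $X_k$-geodesic would coincide with that of $C_i$, hence lie on the geodesic boundary of $Y$, not in its interior. Set $N = 3g_Y-3+n_Y$, the maximum number of pairwise disjoint non-peripheral isotopy classes of simple closed curves in $Y$. Since $\varphi(Y)=Y$, each iterate $\varphi^{-j}(\alpha_k)$ again lies in $\mathrm{int}(Y)$ and is essential and non-peripheral there. Using the inequality $\ell_{X_k}(\varphi^{-j}(\alpha_k)) = \ell_{\varphi^{j}(X_k)}(\alpha_k) \leq L_k^{j}\,\ell_{X_k}(\alpha_k) \leq M^{N}\,\ell_{X_k}(\alpha_k)$ for $0\leq j\leq N$, together with $\ell_{X_k}(\alpha_k)\to 0$, I conclude that for $k$ large enough each of the $N+1$ curves $\alpha_k, \varphi^{-1}(\alpha_k),\ldots,\varphi^{-N}(\alpha_k)$ has $X_k$-length less than the collar constant $\delta_0$ of Lemma \ref{lem:collar}.

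Lemma \ref{lem:collar} then forces any two distinct curves in this list to be realized by disjoint geodesics on $X_k$. Thus the isotopy classes $[\alpha_k],[\varphi^{-1}(\alpha_k)],\ldots,[\varphi^{-N}(\alpha_k)]$ form a collection of pairwise disjoint non-peripheral isotopy classes in $Y$. Since $Y$ carries at most $N$ such classes, the pigeonhole principle yields indices $0\leq i<j\leq N$ with $[\varphi^{-i}(\alpha_k)] = [\varphi^{-j}(\alpha_k)]$ in $Y$, so $\varphi^{j-i}$ fixes the isotopy class of an essential non-peripheral simple closed curve of $Y$. This contradicts the fact that no power of a pseudo-Anosov map on $Y$ fixes an essential simple closed curve class, completing the proof.

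The main potential obstacle is the preliminary subsurface bookkeeping: verifying that $\alpha_k\subset\mathrm{int}(Y)$ rules out $\alpha_k$ being peripheral in $Y$, and tracking the distinction between isotopy in $Y$ and isotopy in $S$ when applying pigeonhole. Both points rest on the uniqueness of the geodesic representative in each free homotopy class and on the fact that the geodesic boundary of $Y$ in $X_k$ is the union of geodesic representatives of the reduction curves; once stated, they are routine. The remainder is a direct adaptation of the collar-plus-pigeonhole argument used in Lemma \ref{lem:systol}, with the role of irreducibility replaced by the stronger property that $\varphi|_Y$ is pseudo-Anosov.
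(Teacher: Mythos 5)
Your proof is correct and follows essentially the same route as the paper, which argues by contradiction and invokes the iterate--collar--pigeonhole argument from the proof of Lemma \ref{lem:systol} to conclude that $\varphi$ restricted to $Y$ would be reducible. You simply make explicit the details the paper leaves implicit, namely that curves in the interior of $Y$ are non-peripheral in $Y$ and that the counting takes place in $Y$ rather than in $S$.
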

\begin{proof}
Suppose not and take a subsequence  of  $(\alpha_k)_{k\geq 1}$, still denoted by $(\alpha_k)$, contained in the interior $Y$. Since $\ell_{ X_k}(\alpha_k)\to 0$ and $d_L(X_k, \varphi(X_k))$  is uniformly bounded above,  using the proof of Lemma \ref{lem:systol}, $\varphi$ is reducible restricted on $Y$. This contradicts  the assumption that $Y$ is a pseudo-Anosov component. 

\end{proof}

\section{A reducible and hyperbolic mapping class}\label{s:red-hyp}

In this section, we  prove the following
\begin{proposition} \label{prop:ex} On an arbitrary surface of genus $\geq 2$, we can find a reducible mapping class which is hyperbolic. 
\end{proposition}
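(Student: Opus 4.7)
The plan is to exhibit a concrete example. Choose a simple closed curve $\alpha$ on $S$ bounding an essential subsurface $\Sigma$ of negative Euler characteristic admitting pseudo-Anosov mapping classes relative to its boundary; such a $\Sigma$ exists on any surface of genus $g \geq 2$ (for instance, a once-holed torus when $g=2$, cut off by a separating curve $\alpha$). Let $\psi$ be a pseudo-Anosov of $\Sigma$ fixing $\partial\Sigma = \alpha$ pointwise, with dilatation $K > 1$ and stable measured lamination $\lambda_1 \subset \Sigma$. Define $\varphi \in \mathrm{Mod}(S)$ as the extension of $\psi$ by the identity on $S \setminus \Sigma$. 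Then $\varphi(\alpha) = \alpha$, so $\varphi$ is reducible.

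For the lower bound on $a(\varphi)$, push $\lambda_1$ forward as a measured lamination on $S$ supported on $\Sigma$. Since $\psi$ is pseudo-Anosov with stable lamination $\lambda_1$, we have $\varphi^{-1}(\lambda_1) = K\lambda_1$, so for every $X \in \mathcal{T}(S)$,
\[
\ell_{\varphi(X)}(\lambda_1) = \ell_X(\varphi^{-1}(\lambda_1)) = K\,\ell_X(\lambda_1).
\]
Approximating $\lambda_1$ by simple closed curves gives $L(X, \varphi(X)) \geq K$, whence $a(\varphi) \geq \log K > 0$.

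For the realization, I would take $X = X_1 \cup_\alpha X_2 \in \mathcal{T}(S)$ with $X_1 \in \mathcal{T}(\Sigma)$ a hyperbolic structure (geodesic boundary $\alpha$) at which the Lipschitz distance $L(X_1, \psi(X_1))$ equals $K$ and is realized by an extremal map $f_1$ fixing $\alpha$ pointwise, and $X_2 \in \mathcal{T}(S \setminus \Sigma)$ any compatible hyperbolic structure (with $\ell_{X_2}(\alpha) = \ell_{X_1}(\alpha)$ and some chosen twist). Gluing $f_1$ on $\Sigma$ with the identity on $S \setminus \Sigma$ produces a globally $K$-Lipschitz continuous map $f: X \to \varphi(X)$, so $L(X, \varphi(X)) \leq K$. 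Combined with the lower bound, $d_L(X, \varphi(X)) = \log K = a(\varphi)$ is realized at $X$, showing $\varphi$ is hyperbolic.

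The main obstacle is producing the structure $X_1$ together with a boundary-fixing extremal Lipschitz map of constant $K$. This is the analog of Theorem \ref{thm:pa} for pseudo-Anosov maps on surfaces with geodesic boundary fixed pointwise: one expects a one-parameter invariant family of hyperbolic structures on $\Sigma$, parametrized by $\ell_\alpha > 0$, each realizing the $d_L$-translation distance $\log K$ via a stretch map directed by a complete lamination that contains $\lambda_1$ and spirals into $\alpha$. Either adapting the stretch-line construction of Section \ref{sec:pa} to bordered surfaces, or passing to the cusped limit $\Sigma^{\mathrm{cusp}}$ (where Theorem \ref{thm:pa} applies directly) and opening up the cusp to a geodesic boundary with controlled Lipschitz estimates, should yield such an $X_1$; the boundary-fixing property of the extremal map then follows from the stretch lamination meeting $\alpha$ tangentially, together with $\psi$ fixing $\alpha$ pointwise. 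With this auxiliary lemma, the gluing argument above completes the proof.
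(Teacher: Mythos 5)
Your construction differs from the paper's, and the difference is not cosmetic: the paper does not extend a pseudo-Anosov by the identity, precisely because that case is problematic. Instead it \emph{doubles} a pseudo-Anosov $\varphi_0$ of a bordered surface $\Sigma_{g,n}$ across its boundary, producing a reducible map of a closed surface that fixes a pair of laminations $\lambda=\lambda_1\cup\lambda_2$, $\mu=\mu_1\cup\mu_2$ filling the whole closed surface. It then completes $\mu$ to $\overline{\mu}$, checks $F_{\overline{\mu}}(\varphi h)=kF_{\overline{\mu}}(h)$, and concludes that $\varphi$ translates along a stretch line, so Theorem \ref{thm:hyperbolic} gives hyperbolicity. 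Your lower bound $a(\varphi)\geq \log K$ is fine and is the same computation as in Section 5 of the paper.

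The gap is the realization step. Everything rests on your ``auxiliary lemma'': a hyperbolic structure $X_1$ on $\Sigma$ with geodesic boundary together with an extremal $K$-Lipschitz map $f_1:X_1\to\psi(X_1)$ that fixes $\alpha$ \emph{pointwise}. You acknowledge this is unproven, but it is not a routine adaptation --- it is the entire difficulty, and the route you sketch runs into a concrete obstruction. A stretch map directed by a complete lamination on $\Sigma$ containing $\lambda_1$ must, near the boundary, either include $\alpha$ as a closed leaf or include leaves spiraling onto $\alpha$; in either case the stretch multiplies arc length along those leaves by $K$, so it dilates $\alpha$ rather than fixing it pointwise, and the glued map with the identity on $S\setminus\Sigma$ is then not even well defined (the boundary lengths of the two pieces no longer match). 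Passing to the cusped surface and ``opening up the cusp with controlled estimates'' is likewise only a hope, not an argument. More globally, for the identity-extension there is no complete lamination on $S$ whose horocyclic foliation is scaled by $\varphi$ (on $S\setminus\Sigma$ the map acts trivially and cannot scale anything by $K$), so the stretch-line mechanism that drives the paper's proof is unavailable; indeed, whether a reducible map with one pseudo-Anosov piece and one periodic piece is hyperbolic or pseudo-hyperbolic is essentially Question (2) left open in the introduction. The doubling trick exists exactly to sidestep all of this: both sides of the reducing curve are stretched coherently, so there is no identity part to match.
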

\begin{proof}
Consider a pseudo-Anosov map $\varphi$ on a hyperbolic surface of genus $g$ with $n>0$ boundary components, which we denote by $\Sigma_{g,n}$ in order to distinguish it from the surface $S_{g,n}$ with punctures.  We assume that $\varphi$ does not permute the complementary components of its stable laminations. We take two copies of  this map $\varphi$, which we denote by $\varphi_1$ and $\varphi_2$, and we denote the two underlying surfaces by $\Sigma_1$ and $\Sigma_2$.  We glue $\Sigma_1$ and $\Sigma_2$ in a symmetric manner along their boundary components and we obtain a homeomorphism of a closed hyperbolic surface $S=S_{2g+n-1,0}$ of genus $2g+n-1$. We denote the homeomorphism $\varphi_1\cup \varphi_2$ by $\varphi$. The surface $S$ has an order-two symmetry $\iota$ which commutes with $\varphi$ (i. e. $\varphi\circ\iota =\iota\circ\varphi$) and which fixes the multicurve $\alpha=\partial \Sigma_1=\partial \Sigma_2\hookrightarrow S$.

For $j=1,2$, let $\lambda_j$ ad $\mu_j$ be the stable and unstable geodesic laminations of $\varphi_j$:

 \[\displaystyle 
\begin{cases}
 \varphi_j(\lambda_j)=k\lambda_j\\
\displaystyle \varphi_j(\mu_j)=\frac{1}{k}\mu_j,
 \end{cases}
\]
$\lambda>1$.
The homeomorphism $\varphi$ is reducible and with no Dehn twist on the reducing multicurve, and it fixes the measured geodesic laminations $\lambda=\lambda_1\cup\lambda_2$ and $\mu=\mu_1\cup \mu_2$; cf. Figure \ref{phi}.

  \begin{figure}[ht!]
\centering
\psfrag{a}{\small $\alpha$}
\psfrag{f}{\small $\varphi_1$}
\psfrag{g}{\small $\varphi_2$}
\psfrag{i}{\small $\iota$}
\includegraphics[width=.60\linewidth]{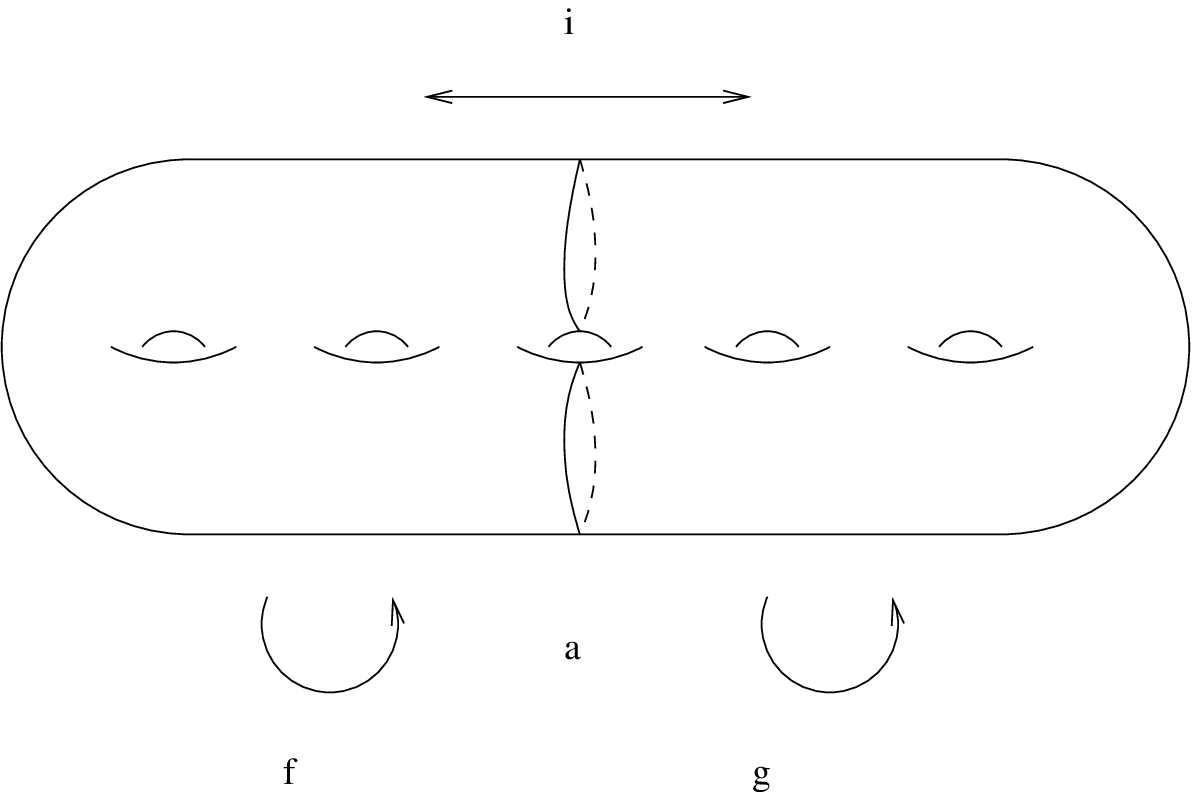}
\caption{\small {}}
\label{phi}
\end{figure}

A  component  of $S\setminus\lambda$ or of $S\setminus\mu$ may be of two types:
\begin{enumerate} 
\item \label{c1} an \emph{ideal polygon} (that is, a hyperbolic polygon with cusps, or, equivalently, a surface isometric to the convex hull in $\mathbb{H}^2$ of a finite number of points at infinity);
\item \label{c2} a \emph{crown} (that is, a hyperbolic annulus with cusps on one of its boundary components).
\end{enumerate}
  The two types of surfaces are represented in Figure \ref{crown}.

The components of $S\setminus\lambda$ or $S\setminus\mu$ that contain elements of the multicurve $\alpha$ can only be of the type (\ref{c2}).

  \begin{figure}[ht!]
\centering
\psfrag{1}{\small ideal polygon}
\psfrag{2}{\small crown}
\psfrag{b}{\small $\beta$}
\includegraphics[width=.60\linewidth]{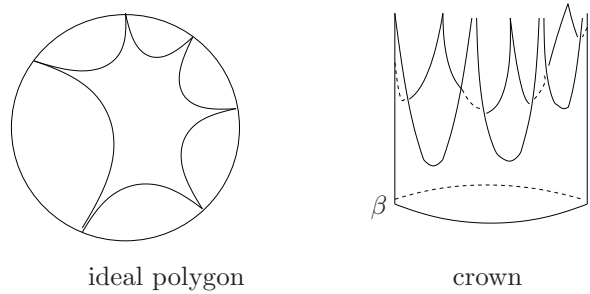}
\caption{\small {The two possible types of components of $S\setminus \lambda$ and $S\setminus \mu$.}}
\label{crown}
\end{figure}

  \begin{figure}[ht!]
\centering
\includegraphics[width=.60\linewidth]{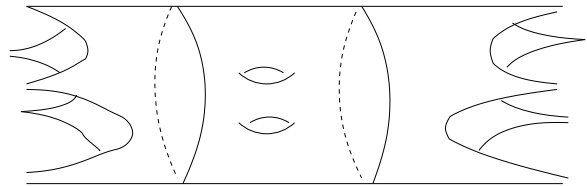}
\caption{\small {A crowned surface}}
\label{crowned}
\end{figure}

We consider a component $C$ of $S \setminus \mu$ and we lift it to the universal cover $\widetilde{S}$.  The map $\varphi$ also lifts to $\widetilde{S}$ and we can choose a lift $\widetilde{C}$ of $C$ such that it is fixed (setwise) by the lift $\widetilde{\varphi}$ of $\varphi$. The ideal points of $\widetilde{C}$ cut the circle at infinity into segments (Figure \ref{lift}). 

  \begin{figure}[ht!]
\centering
\psfrag{1}{\small }
\psfrag{2}{\small }
\psfrag{b}{\small $\widetilde{\beta}$}
\includegraphics[width=.70\linewidth]{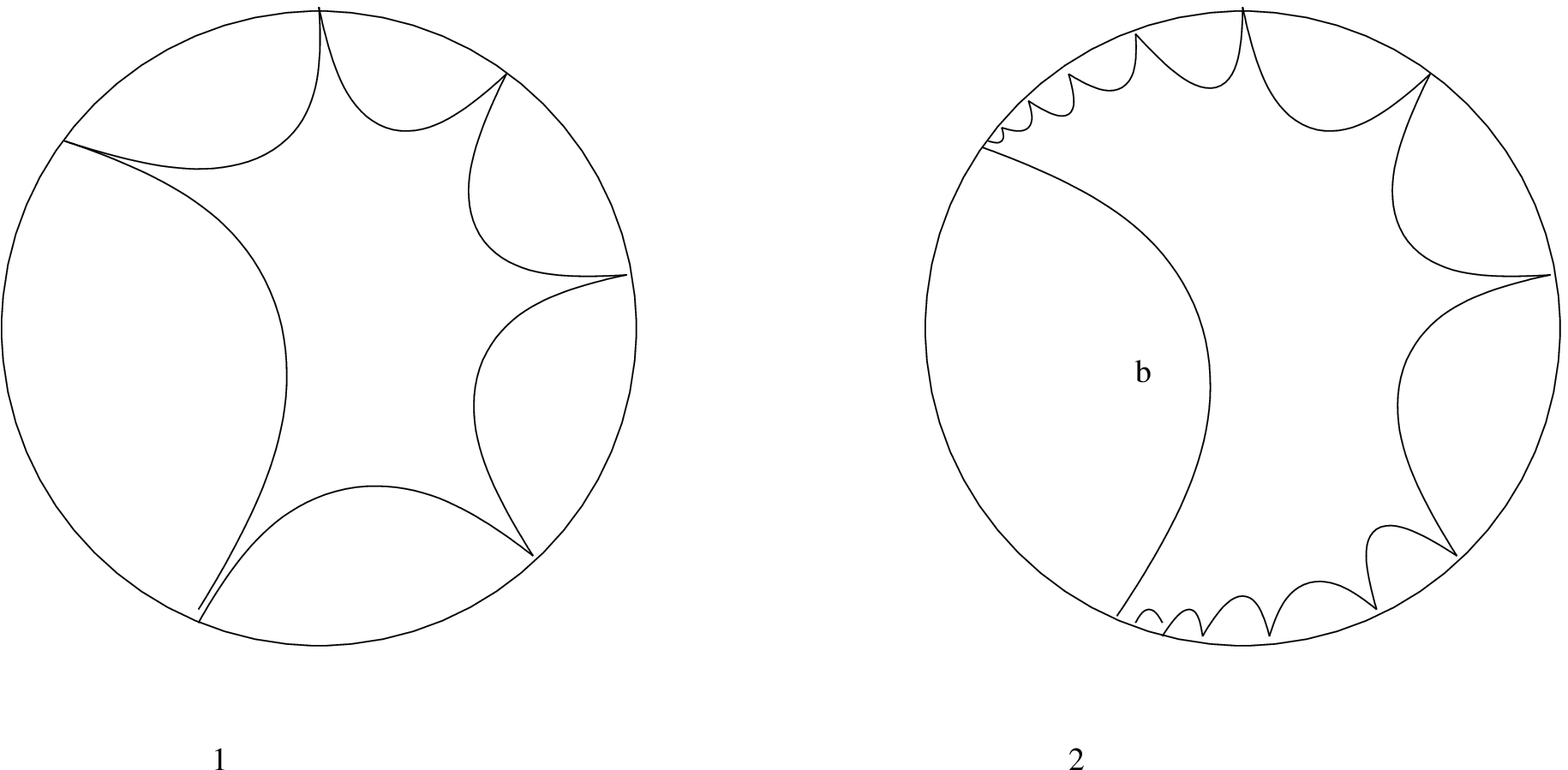}
\caption{\small {The lift of an ideal polygon (left) and of a crown (right). $\widetilde{\beta}$ is the lift of the boundary component $\beta$ of the crown.}}
\label{lift}
\end{figure}

Each segment between two ideal vertices of $\widetilde{C}$ contains in its interior exactly one fixed point; these fixed points of $\widetilde{\varphi}$ on $S^1_\infty$ are alternatively ideal vertices of lifts $\widetilde{C}_\mu$ of components  $C_\mu$ of $S\setminus\mu$ and ideal vertices of lifts $\widetilde{C}_\lambda$ of components $C_\lambda$ of $S\setminus \lambda$ and they are alternatively sinks and sources; cf. Thurston and Handel \cite{HT} and \cite{BC}. 

Consider now a completion $\overline{\mu}$ of $\mu$ such that all the extra leaves of $\overline{\mu}$ are isolated. Since $\varphi$ has no Dehn twists along its reducing multicurve, $\overline{\mu}$ is fixed by $\varphi$. The measured geodesic lamination $\lambda$ is totally transverse to $\overline{\mu}$, that is, $\lambda\in \mathcal{ML}(\overline{\mu})$. Let $F_\lambda$ be a measured foliation corresponding to $\lambda$. By a result of Thurston (cf. Theorem \ref{thm:thurston1} above), there exists a unique hyperbolic structure $h$ on $S$ such that 
$F_{\overline{\mu}}(h)=F_\lambda$, where $F_{\overline{\mu}}(h)$ is the horocyclic foliation.  
\begin{lemma} 
We have
\[F_{\overline{\mu}}(\varphi h)=\varphi(F_{\overline{\mu}}(h))=kF_{\overline{\mu}}(h).\]
\end{lemma}
\begin{proof}
Since $\varphi(\overline{\mu}=\overline{\mu})$ and since $h$ and $\varphi h$ are isometric hyperbolic structures on $S$, the leaves of the foliations $F_\lambda=F_{\overline{\mu}}(h)$ and those of $F_{\overline{\mu}}(\varphi h)$ have the same endpoints on the circle at infinity, once we lift the situation to the universal coverings. But these endpoints are those of $\lambda$ and they are fixed by $\widetilde{\varphi}$ setwise. Hence $F_{\overline{\mu}}(\varphi h)=F_\lambda$ (here,  equality forgets about the transverse measures), or, equivalently, $\lambda_{\overline{\mu}}(h)=\lambda_{\overline{\mu}}(\varphi h)=\lambda$ setwise.
Let $k'$ be the multiplicative factor:
\[\lambda_{\overline{\mu}}(h)=k'\lambda_{\overline{\mu}}(\varphi h)=k'\lambda.\]
Then,

\[
\ell_{\varphi h} =\ell_h(\varphi^{-1}\mu)=k \ell_h(\mu)  
=i(\lambda_{\overline{\mu}}(\varphi h),\mu)=k' i(\lambda_{\overline{\mu}}(h),\mu)=k' \ell_h(\mu).
\]

  \begin{figure}[ht!]
\centering
\psfrag{M}{\small $\widetilde{C}_{\mu}$}
\psfrag{L}{\small  $\widetilde{C}_{\lambda}$}
\psfrag{f}{\small $\widetilde{\varphi}$}
\psfrag{b}{\small $\widetilde{\beta}$}
\psfrag{1}{\small ideal polygon}
\psfrag{2}{\small crown}
\includegraphics[width=.70\linewidth]{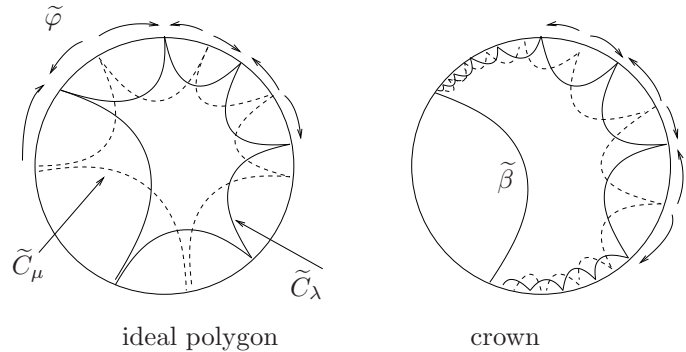}
\caption{\small {The dynamics on the universal cover.}}
\label{poly}
\end{figure}

Hence $k=k'$ and the lemma is proved.
\end{proof}

Thus, $\varphi$ fixes globally a stretch line and $d_T(h,\varphi h)=\log k$, that is, $\varphi$ acts by translation on that stretch line.

This proves proposition \ref{prop:ex}.
\end{proof}

 \end{document}